\newcommand{\ncm}{\newcommand}
\newtheorem{theorem}{Theorem}[section]
\newtheorem{prop}[theorem]{Proposition}
\newtheorem{lemma}[theorem]{Lemma}
\newtheorem{cor}[theorem]{Corollary}
\newtheorem{lem&def}[theorem]{Lemma \& Definition}
\newtheorem{definition}[theorem]{Definition}
\newtheorem{example}[theorem]{Example}
\newtheorem{remark}[theorem]{Remark}
\def\M{\mathcal{M}}
\def\C{\mathbb{C}\,} 
\def\Z{\mathbb{Z}\,} 
\def\N{\mathbb{N}\,}
\def\G{\mathcal{G}}
\def\H{\mathcal{H}}
\def\One{\mathbf{1}}
\ncm{\End}{\mbox{\rm End}\,}
\def\Hom{\mbox{\rm Hom}\,}
\def\|{\, | \,}
\def\id{\mbox{\rm id}}
\def\into{\hookrightarrow}
\def\to{\rightarrow}
\def\o{\otimes}    
\def\bra{\langle}
\def\ket{\rangle}
\ncm{\rarr}[1]{\stackrel{#1}{\longrightarrow}}
\ncm{\larr}[1]{\stackrel{#1}{\longleftarrow}}
\def\cop{\Delta}
\def\eps{\varepsilon}
\def\-2{_{(-2)}}
\def\-1{_{(-1)}}
\def\0{_{(0)}}
\def\1{_{(1)}}
\def\2{_{(2)}}
\def\3{_{(3)}}
\def\n{_{(n)}}
\def\du1{\hat 1}
\begin{document}
\title[Hopf subalgebras of finite depth]{Hopf subalgebras and tensor powers of  generalized permutation modules}
\author{Lars Kadison} 
\address{Departamento de Matematica \\ Faculdade de Ci\^encias da Universidade do Porto \\ 
Rua Campo Alegre, 687 \\ 4169-007 Porto} 
\email{lkadison@fc.up.pt } 
\subjclass{16D20, 16G60, 16S34, 16T05, 19A22 }  
\keywords{subalgebra depth,  basic Hopf algebra,   normal element, smash product, finite representation type, relative Hochschild cohomology}
\date{} 

\begin{abstract}
By means of a certain  module $V$ and its tensor powers in a finite tensor category, we study a question of whether the depth of a Hopf subalgebra $R$ of a finite-dimensional Hopf algebra $H$ is finite.   The module $V$  is the counit representation induced from $R$ to $H$, which is then a generalized permutation module, as well as a module coalgebra.  We show that if in the subalgebra pair either Hopf algebra has finite representation type,  or $V$ is either semisimple with $R^*$ pointed,  projective,  or its tensor powers satisfy a Burnside ring formula over a finite set of Hopf subalgebras including $R$, then the depth of $R$ in $H$ is finite.  One assigns a nonnegative integer depth to $V$, or any other $H$-module,   by comparing the truncated tensor algebras of $V$ in a finite tensor category and so obtains upper and lower bounds for depth of a Hopf subalgebra.  For example, 
a relative Hopf restricted module has depth $1$, and a permutation module of a corefree subgroup has depth less than the number of values of its character.  
\end{abstract} 
\maketitle

\section{Introduction}

Two modules are said to be similar if each module is isomorphic to a  direct summand of a direct sum of copies of the other module: briefly formulated, each module divides a multiple of the other.  If the modules are finitely generated over a finite-dimensional algebra, similarity is equivalent by the Krull-Schmidt theorem to their having the same constituent indecomposables (i.e., isoclasses of indecomposable  direct summands).  A subalgebra $B$ in an algebra $A$ is said to have finite depth if $A^{\otimes_B \, (n+1)}$ is similar to $A^{\otimes_B n}$ for some $n \geq 0$ as $X$-$Y$-bimodules for any four choices of $X,Y \in \{ B, A \}$ \cite{BDK, LK2013}: see Section~2 for the precise definition of minimum depth  $d(B,A)$ and h-depth $d_h(B,A)$.  
It is rather easy to see that $A^{\otimes_B n}$ divides $A^{\otimes_B (n+1)}$ for all $n$, whence if $A$ is finite dimensional and any one of  $A \otimes B^{\rm op}$, 
$B \otimes A^{\rm op}$, $A^e$ or $B^e$ has finite representation type, the depth is finite, since the constituent indecomposables of the tensor powers of $A$ over $B$ are ascending subsets within a 
finite set of isoclass representative indecomposable modules.  

In \cite[Boltje-Danz-K\"ulshammer]{BDK} the depth of a finite group algebra extensions $K[H] \subseteq K[G]$, where $K$ is a commutative ring, is shown to be finite.  The authors propose the problem of determining whether the depth of a Hopf subalgebra $R$ in a finite-dimensional Hopf algebra $H$ is finite
\cite[p.\ 259]{BDK}.  In this paper we take up this problem  by showing that any tensor power of the bimodule ${}_RH_R$ is  naturally isomorphic to a smaller tensor power of $V := H/R^+H$ where $R^+$ is the augmentation ideal of $R$; more precisely, $H^{\otimes_R n} \cong H \otimes V^{\otimes n-1}$ as $H$-$H$-bimodules where the unadorned tensor is the tensor in the category $\M_H$ of finite-dimensional right $H$-modules, a finite tensor category (see \cite{EO} or Section~4 for the definition). We then define depth of a module $W$ in a finite tensor category $\mathcal{C}$ to be $n$ if the truncated tensor algebra $T_{n+1}(W)$ divides a multiple of $T_n(W)$; this condition simplifies to $V^{\otimes (n+1)} \| q V^{\otimes n}$
for a multiple $q$ if $V$ is a module coalgebra or module algebra (Prop.~\ref{prop-sepext}). It follows that  $T_{n+m}(W)$ and $T_{n+m+1}(W)$ are similar in $\mathcal{C}$ for each integer $m \geq 0$; let $d(W,\mathcal{C})$ denote the least such $n$. It is then shown that  the depth of the Hopf subalgebra satisfies $2d(V,\M_R) + 1 \leq d(R,H) \leq  2d(V,\M_R) +2$.  It follows from this that $R$ has finite depth in $H$ if $V$ is either projective, semisimple with $R^*$ pointed, $V$ is in a tensor category of finite representation type, or the left $H$-module algebra $V^*$ has a smash product with either $R$ or $H$ of finite representation type.  In Section~6,  the result in \cite{BDK} that finite group algebra extensions have finite depth is recovered in the case of an arbitrary ground field.   

\subsection{An analogy for depth of modules from number theory}
An analogy from number theory is  to ask if a sequence $\{ a_n \}_{n=1}^{\infty}$ of positive integers  is affinely generated over the primes (i.e.,  products of finitely many primes).  For example, this is the case if $a_n = a^n$ for some integer $a$, but is not the case if $a_n = p_n$, a sequence of increasing primes. As a type of toy model for what we do, say that the sequence $\{ a_n \}$ has depth $m$ if
$a_{m+1+j}$ divides a power of $a_1 \cdots a_{m+j}$ for each $j = 0,1,2,\ldots$.    Then the sequence $a_n = a^n$ has minimum depth one, and the sequence $a_n = p_n$ has infinite depth. 
As another example, the Fibonacci sequence $\{ u_n \}$ has infinite depth, since $u_{p_n}$ is a subsequence of increasing primes \cite[Theorem 179 (iv)]{HW}.  A sequence $\{a_n\}$ of minimum depth $m$
for each $m \geq 1$ may be obtained from $m$ natural numbers $u_i$ with $\mathrm{gcd}(u_1,\ldots, u_m) = 1$ by letting $a_n = u_r^{q+1}$ where $n = mq + r$ for $q \geq 0, 1\leq r \leq m$. 
The analogy is quite good for illustrating applications of the Krull-Schmidt theorem and similarity of modules with applications of unique factorization into primes and
divisibility, but can mislead if one replaces $\{ a^n \}$ with a module and its tensor powers in a finite tensor category $\mathcal{C}$ \cite{EO}, which rarely has depth $1$ (although relative Hopf restricted modules are an exception, see below); it is unclear if a module has finite depth if $\mathcal{C}$ has tame or wild representation type.

\section{Preliminaries on subalgebra depth} Let $A$ be a unital associative algebra over a field $k$. In this paper we will assume all algebras and modules to be finite-dimensional vector spaces for the sake of keeping a focus on the problem of Boltje \textit{et al}, although much below remains true without this assumption \cite{LK2013}. Two modules $M_A$ and $N_A$ are \textit{similar} (or H-equivalent) if $M \oplus * \cong qN = N \oplus \cdots \oplus N$ ($q$ times) and $N \oplus * \cong rM$
for some $r,q \in \N$.  This is briefly denoted by $M \| qN$ and $N \| rM$ $\Leftrightarrow$ $M \sim N$. 

 Let $B$ be a subalgebra of $A$ (always supposing $1_B = 1_A$).  Consider the natural bimodules ${}_AA_A$, ${}_BA_A$, ${}_AA_B$ and
${}_BA_B$ where the last is a restriction of the preceding, and so forth.  Denote the tensor powers
of ${}_BA_B$ by $A^{\otimes_B n} = A \otimes_B \cdots \otimes_B A$ for $n = 1,2,\ldots$, which is also a natural bimodule over  $B$ and $A$ in any one of four ways;     set $A^{\otimes_B 0} = B$ which is only a natural $B$-$B$-bimodule.  

If $A^{\otimes_B (n+1)}$ is similar to $A^{\otimes_B n}$ as $X$-$Y$-bimodules,  one says $B \subseteq A$ has  
\begin{itemize}
\item depth $2n+1$ if $X = B = Y$;
\item left depth $2n$ if $X = B$ and $Y = A$;
\item right depth $2n$ if $X = A$ and $Y = B$;
\item h-depth $2n-1$ if $X = A = Y$.
\end{itemize}
valid for even depth and h-depth if $n \geq 1$ and for odd depth if $n \geq 0$. 

 For example, $B \subseteq A$ has depth $1$ iff ${}_BA_B$ and ${}_BB_B$ are similar \cite{BK2, LK2013}.  In this case, it is easy to show that $A$ is algebra isomorphic to $B \otimes_{Z(B)} A^B$ where
$Z(B), A^B$ denote the center of $B$ and centralizer of $B$ in $A$.
Another example, $B \subset A$ has right depth $2$ iff ${}_AA_B$
and ${}_A A \otimes_B A_B$ are similar.  If $A = \C G$ is a group algebra of a  finite group $G$ and $B = \C H$ is a group algebra of a subgroup $H$ of $G$, then $B \subseteq A$ has right depth $2$ iff $H$ is a normal subgroup of $G$ iff $B \subseteq A$ has left depth $2$ \cite{KK}; a similar statement is true for a Hopf subalgebra $R \subseteq H$ of finite index and over any field \cite{BK}.  

Note that $A^{\otimes_B n} \| A^{\otimes_B (n+1)}$ for all $n \geq 2$
and in any of the four natural bimodule structures: one applies $1$ and multiplication to obtain a split monic, or split epi oppositely. For three of the bimodule structures, it is true for $n =1$;  as $A$-$A$-bimodules, equivalently $A \| A \otimes_B A$ as $A^e$-modules, this is the separable extension condition on $B \subseteq A$.  
But $A \otimes_B A \| qA$ as $A$-$A$-bimodules for some $q \in \N$
is the H-separability condition and implies $A$ is a separable extension of $B$ \cite{K}.  Somewhat similarly, ${}_BA_B \| q ({}_BB_B)$ implies
${}_BB_B \| {}_BA_B$ \cite{LK2013}. It follows that subalgebra depth and h-depth may be equivalently defined by replacing similarity above with $A^{\otimes_B (n+1)} \| q A^{\otimes_B n}$ for some positive integer $q$ \cite{BDK, LK2012, LK2013}.  

Note that if $B \subseteq A$ has h-depth $2n-1$, the subalgebra has (left or right) depth $2n$ by restriction of modules.  Similarly, if $B \subseteq A$ has depth $2n$, it has depth $2n+1$.  If $B \subseteq A$ has depth $2n+1$, it has depth $2n+2$ by tensoring either $-\otimes_B A$ or $A \otimes_B -$ to $A^{\otimes_B (n+1)} \sim A^{\otimes_B n}$.     Similarly, if $B \subseteq A$ has left or right depth $2n$, it has h-depth $2n+1$.  Denote the minimum depth
of $B \subseteq A$ (if it exists) by $d(B,A)$ \cite{BDK}.  Denote the minimum h-depth of $B \subseteq A$ by $d_h(B,A)$.  Note that
$d(B,A) < \infty$ if and only if $d_h(B,A) < \infty$; in fact, $| d(B,A) - d_h(B,A)| \leq 2$ if either is finite.  

For example, for the permutation groups $\Sigma_n < \Sigma_{n+1}$
and their corresponding group algebras $B \subseteq A$ over any
commutative ring $K$, one has depth $d(B,A) = 2n-1$ \cite{BKK,BDK}. Depths of subgroups in $PGL(2,q)$, twisted group algebras and Young subgroups of $\Sigma_n$ are computed in \cite{F, D, FKR}.    If $B$ and $A$ are semisimple complex algebras, the minimum odd depth is computed from powers of an order $r$ symmetric matrix with nonnegative entries $S := MM^t$ where $M$ is the inclusion matrix
$K_0(B) \rightarrow K_0(A)$ and $r$ is the number of irreducible representations of $B$ in a basic set of $K_0(B)$; the depth is $2n+1$ if $S^n$ and $S^{n+1}$ have an equal number of zero entries \cite{BKK}. (For example, the matrix $S$ has Frobenius-Perron eigenvector, the dimension vector of $B$-simples with eigenvalue $|A : B|$, the rank of the free $B$-module $A$ if $A$ and $B$ are an algebra extension of finite groups or semisimple Hopf algebras.)
Similarly, the minimum h-depth of $B\subseteq A$ is computed from
powers of an order $s$ symmetric matrix $T = M^tM$, where $s$ is the rank of $K_0(A)$, and the power $n$ at
which the number of zero entries of $T^n$ stabilizes \cite{LK2012}. 
It follows that the subalgebra pair of semisimple complex algebras $B \subseteq A$   always has finite depth.
In particular, a Hopf subalgebra of a semisimple complex Hopf algebra has finite depth, since it is semisimple \cite[2.2.2, 3.1.5]{M}.  

\subsection{Descent and going up for separable and split extensions} An algebra extension $B$ over (a subalgebra) $C$ is \textit{separable} if there is an element
$e \in B \otimes_C B$ such that $be = eb$ for all $b \in B$ and $e^1e^2 = 1_B $
in Sweedler-like notation (equivalently, the multiplication mapping $\mu: B \otimes_C B \rightarrow B$ is a split epimorphism of $B$-$B$-bimodules).  

\begin{prop}
\label{prop-separability}
Suppose $A \supseteq B \supseteq C$ is a tower of subalgebras in a finite-dimensional algebra $A$,
where $B$ is a separable extension of $C$.  If depth $d(C,A)$ or $d_h(C,A)$ is finite, then both $d(B,A)$
and $d_h(B,A)$ are finite.
\end{prop}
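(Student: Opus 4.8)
The plan is to descend finiteness of depth from $C$ to $B$ by comparing the tensor powers $A^{\otimes_B n}$ with $A^{\otimes_C n}$ and then invoking the Krull--Schmidt theorem. First I would record the structural input supplied by separability. Writing $e = e^1 \otimes_C e^2 \in B \otimes_C B$ for a separability element, so that $be = eb$ for all $b \in B$ and $e^1 e^2 = 1_B$, I claim that for every $n \geq 1$ the canonical surjection $A^{\otimes_C n} \twoheadrightarrow A^{\otimes_B n}$ is a split epimorphism of $A$-$A$-bimodules, whence $A^{\otimes_B n}$ is an $A$-$A$-bimodule direct summand of $A^{\otimes_C n}$. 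To see this I would factor the surjection as a composite of $n-1$ maps, each replacing a single $\otimes_C$ by $\otimes_B$ in one interior slot; such a map has the form $X \otimes_C Y \to X \otimes_B Y$ for a right $B$-module $X$ and a left $B$-module $Y$ (the outer factors carrying the residual $A$-actions), and is split by $x \otimes_B y \mapsto x e^1 \otimes_C e^2 y$. The relation $be = eb$ makes this section well defined, $e^1 e^2 = 1_B$ makes it a section, and since $e$ is inserted in an interior slot the section is $A$-$A$-bilinear; a composite of split $A$-$A$-epimorphisms is again one.

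Next I would extract the finiteness hypothesis in the form most convenient for a Krull--Schmidt argument. Since $d(C,A) < \infty$ if and only if $d_h(C,A) < \infty$, I may assume $d_h(C,A) < \infty$, which means $A^{\otimes_C (N+1)} \sim A^{\otimes_C N}$ as $A$-$A$-bimodules for some $N$. Because $A^{\otimes_C n} \| A^{\otimes_C (n+1)}$ for all $n \geq 2$, the chain of constituent indecomposable $A$-$A$-bimodules $\Indec(A^{\otimes_C n})$ is ascending and, by this similarity, eventually constant, so the union $\mathcal{F} := \bigcup_{n \geq 1} \Indec(A^{\otimes_C n})$ is a \emph{finite} set of isoclasses.

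Finally I would run the ascending-chain argument for $B$. By the first paragraph each $A^{\otimes_B n}$ is an $A$-$A$-bimodule summand of $A^{\otimes_C n}$, so $\Indec(A^{\otimes_B n}) \subseteq \Indec(A^{\otimes_C n}) \subseteq \mathcal{F}$ for every $n$. The sets $\Indec(A^{\otimes_B n})$ form an ascending chain (again using $A^{\otimes_B n} \| A^{\otimes_B (n+1)}$ for $n \geq 2$) inside the finite set $\mathcal{F}$, hence they stabilize: there is an $m$ with $\Indec(A^{\otimes_B (m+1)}) = \Indec(A^{\otimes_B m})$. By Krull--Schmidt this gives $A^{\otimes_B (m+1)} \sim A^{\otimes_B m}$ as $A$-$A$-bimodules, so $d_h(B,A) < \infty$; and then $d(B,A) < \infty$ as well, since the two differ by at most $2$ when finite.

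The one step carrying genuine content is the separability descent of the first paragraph---that $A^{\otimes_B n}$ sits inside $A^{\otimes_C n}$ as an $A$-$A$-bimodule summand for all $n$---since the remainder is a formal consequence of the finiteness of $\mathcal{F}$ together with the Krull--Schmidt theorem. I would take care there that the section is inserted in an interior tensor slot, so that it genuinely respects the outer $A$-$A$-bimodule structure rather than merely the one-sided $B$-module structures; this is exactly what upgrades the divisibility to the $A$-$A$-bimodule (h-depth) level and lets finiteness of $d_h(C,A)$ do the work.
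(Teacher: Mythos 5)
Your proposal is correct and follows essentially the same route as the paper: separability splits the canonical $A$-$A$-bimodule epimorphisms $A^{\otimes_C r} \twoheadrightarrow A^{\otimes_B r}$, so the constituent indecomposables of the $A^{\otimes_B n}$ form an ascending chain inside the finite set coming from the finiteness of $d_h(C,A)$, and Krull--Schmidt forces stabilization. The only (immaterial) differences are that the paper writes one explicit global section $a_1 \otimes_B \cdots \otimes_B a_r \mapsto a_1 e^1 \otimes_C e^2 a_2 e^1 \otimes_C \cdots \otimes_C e^2 a_r$ instead of your slot-by-slot composite, and bounds the chain by $\mathrm{Indec}_{A^e}(A^{\otimes_C n})$ at a single level rather than by the union over all levels.
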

\begin{proof}
It will suffice to show that  $d_h(B,A) < \infty$ if $d_h(C,A) = 2n-1$.  
Then $A^{\otimes_C n} \sim A^{\otimes_C (n + s)}$ for each integer $s \geq 1$, as finite-dimensional $A^e$-modules,
which have a Krull-Schmidt theorem.  Denote the finite set of indecomposable constituents by
$\mathcal{P} := \mathrm{Indec}_{A^e}\, A^{\otimes_C  n}$.  
Let $| \mathcal{P} | = m$.  

Next note that the canonical epis $\pi_r: A^{\otimes_C r} \rightarrow A^{\otimes_B r} \rightarrow 0$
(defined by $\pi_r(a_1 \otimes_C \cdots \otimes_C a_r) = a_1 \otimes_B \cdots \otimes_B a_r$) are split as $A$-$A$-bimodule homomorphisms, since  using $e$ defined above, 
\begin{equation}
a_1 \otimes_B \cdots \otimes_B a_r \longmapsto a_1 e^1 \otimes_C e^2 a_2 e^1 \otimes_C \cdots \otimes_C e^2 a_r,
\end{equation}
is a well-defined mapping $\sigma_r: A^{\otimes_B r} \rightarrow A^{\otimes_C r}$ that satisfies
$\pi_r \circ \sigma_r = \id_{A^{\otimes_B r}}$.  It follows that $A^{\otimes_B r} \| A^{\otimes_C r}$ for each $r \geq 1$.  

Then  $A^{\otimes_B (n+s)} \| A^{\otimes_C (n + s)} \sim A^{\otimes_C n}$ for each integer $s \geq 0$, so that $\mathrm{Indec}_{A^e}(A^{\otimes_B (n+s)}) \subseteq \mathcal{P}$ for each $s \geq 0$.
Then $\mathrm{Indec}_{A^e}(A^{\otimes_B (n+m)}) = \mathrm{Indec}_{A^e}(A^{\otimes_B (n+m+1)})$, equivalently $A^{\otimes_B (n+m)} \sim A^{\otimes_B (n+m+1)}$.  Hence $d_h(B,A) \leq 2(n+m)-1$.  
\end{proof}
   
An algebra extension $A \supseteq B$ is said to be \textit{split} if there is a bimodule projection
$E: A \rightarrow B$, i.e., $E$ is $B$-$B$-bimodule mapping satisfying $E(b) = b$ for all $b \in B$.  
Separable extension and split extension have an unusual duality relationship \cite{K}.  In considering examples, it is important to note Sugano and Cohen's inequalities for the global dimensional $D(-)$ of algebras:  if
$A$ is a projective separable extension of $B$, then $D(A) \leq D(B)$, and oppositely
$D(B) \leq D(A)$ if $A$ is a projective split extension of $B$.  

\begin{prop}
\label{prop-split}
Suppose $A \supseteq B \supseteq C$ is a tower of subalgebras in a finite-dimensional algebra $A$, where $A$ is a split extension of $B$.  If $d(C,A) < \infty$, then $d(C,B) < \infty$. 
\end{prop}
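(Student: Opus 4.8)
The plan is to mirror the proof of Proposition~\ref{prop-separability}, replacing the separability element by the bimodule projection that witnesses the split extension, and comparing the towers $C \subseteq B$ and $C \subseteq A$ in the category of $C$-$C$-bimodules rather than $A^e$-modules. The reason $C$-$C$-bimodules are the right setting here is that the splitting map is only $B$-$B$-linear (hence $C$-$C$-linear), not $A$-linear, so one cannot descend it through the coarser $A^e$-structure.

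First I would fix a $B$-$B$-bimodule projection $E \colon A \to B$ with $E|_B = \id_B$. Since $C \subseteq B$, the map $E$ is in particular a $C$-$C$-bimodule homomorphism, so for each $r \geq 1$ the tensor power $E^{\otimes_C r} := E \otimes_C \cdots \otimes_C E \colon A^{\otimes_C r} \to B^{\otimes_C r}$ is a well-defined $C$-$C$-bimodule map (it is $C$-balanced on both ends and across each interior slot). Writing $\iota \colon B \into A$ for the inclusion and using $E \circ \iota = \id_B$, functoriality of $-\otimes_C -$ yields $E^{\otimes_C r} \circ \iota^{\otimes_C r} = (E\circ\iota)^{\otimes_C r} = \id_{B^{\otimes_C r}}$. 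Thus $B^{\otimes_C r}$ is a retract of $A^{\otimes_C r}$, that is $B^{\otimes_C r} \| A^{\otimes_C r}$ as $C$-$C$-bimodules for every $r \geq 1$, and consequently $\mathrm{Indec}_{C^e}(B^{\otimes_C r}) \subseteq \mathrm{Indec}_{C^e}(A^{\otimes_C r})$.

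Next I would invoke finiteness of $d(C,A)$ in its odd-depth form: choose $n$ with $A^{\otimes_C (n+1)} \sim A^{\otimes_C n}$ as $C$-$C$-bimodules, which propagates to $A^{\otimes_C (n+s)} \sim A^{\otimes_C n}$ for all $s \geq 0$. Setting $\mathcal{P} := \mathrm{Indec}_{C^e}(A^{\otimes_C n})$ and $m := |\mathcal{P}|$, one has $\mathrm{Indec}_{C^e}(A^{\otimes_C r}) = \mathcal{P}$ for all $r \geq n$, hence by the previous step $\mathrm{Indec}_{C^e}(B^{\otimes_C r}) \subseteq \mathcal{P}$ for all $r \geq n$. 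On the other hand the general monotonicity $B^{\otimes_C r} \| B^{\otimes_C (r+1)}$ shows that the sets $\mathrm{Indec}_{C^e}(B^{\otimes_C r})$ form an ascending chain inside $\mathcal{P}$. Since an ascending chain of subsets of an $m$-element set can increase strictly at most $m$ times, some $r_0$ with $n \leq r_0 \leq n+m-1$ satisfies $\mathrm{Indec}_{C^e}(B^{\otimes_C r_0}) = \mathrm{Indec}_{C^e}(B^{\otimes_C (r_0+1)})$; by Krull-Schmidt this is precisely $B^{\otimes_C (r_0+1)} \sim B^{\otimes_C r_0}$, so $d(C,B) \leq 2r_0+1 \leq 2(n+m)-1 < \infty$.

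The only genuinely delicate point is the first step: confirming that the split structure descends to a $C$-$C$-bimodule retraction on every tensor power, i.e.\ that $E^{\otimes_C r}$ is well defined over $C$ using only that $E$ is $C$-balanced, and that it splits $\iota^{\otimes_C r}$. Once $B^{\otimes_C r} \| A^{\otimes_C r}$ is secured in the $C$-$C$-bimodule category, the rest is exactly the finite pigeonhole-on-indecomposables argument of Proposition~\ref{prop-separability}; one could equally phrase the conclusion through $d_h(C,B)$, since finiteness of $d$ and of $d_h$ are equivalent.
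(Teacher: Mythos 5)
Your proposal is correct and follows essentially the same route as the paper: split each tensor power $A^{\otimes_C r}\to B^{\otimes_C r}$ slotwise via $E$, deduce $B^{\otimes_C r}\,\|\,A^{\otimes_C r}$, and trap the ascending chain of indecomposable constituents of $B^{\otimes_C r}$ inside the finite set $\mathrm{Indec}_{C^e}(A^{\otimes_C n})$ supplied by $d(C,A)<\infty$. The only (harmless) differences are that you phrase the retraction directly in $C$-$C$-bimodules rather than first in $B$-$B$-bimodules, and your pigeonhole gives the marginally sharper bound $2(n+m)-1$ in place of the paper's $2(n+m)+1$.
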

\begin{proof}
Let $E: {}_BA_B \rightarrow {}_BB_B$ be a projection onto $B$.  Then the  $B$-$B$-monomorphism induced by inclusion, $0 \rightarrow B^{\otimes_C n} \rightarrow A^{\otimes_C n}$ splits for each $n \in \N$:  a splitting is given by 
$$a_1 \otimes_C \cdots \otimes_C a_n \longmapsto E(a_1) \otimes_C \cdots \otimes_C E(a_n).$$

If $d(C,A) = 2m+1$, then $A^{\otimes_C m} \sim A^{\otimes_C (m+r)}$ as $C$-$C$-bimodules, for each $r \in \N$.  It follows from $B^{\otimes_C n} \| A^{\otimes_C n}$ as $B$-$B$-bimodules established in the first paragraph, 
that $B^{\otimes_C (m+r)} \| A^{\otimes_C m}$ as $C$-$C$-bimodules
for each $r \in \N$.  As in the previous proposition and its proof, the ascending subsets
$\mathrm{Indec}_{C^e}(B^{\otimes_C (m+r)}) \subseteq \mathrm{Indec}_{C^e}(A^{\otimes_C m})$ are bounded above by a finite set for each $r \in \N$.  It follows that
$d(C,B) \leq 2(m+q) + 1$ where $q = | \mathrm{Indec}_{C^e}(A^{\otimes_C m})|$.  
\end{proof}

For a finite-dimensional algebra $A$ with subalgebra $B$, there is the Higman-Jans' theorem that $A$ has finite
representation type (f.r.t.) will imply that $B$ has f.r.t.\ if $A$ is a split extension of $B$;  conversely,
$B$ has f.r.t.\ will imply that $A$ has f.r.t.\ if $A$ is a separable extension of $B$ \cite[pp.\ 173-174, 194]{P}; in other words, f.r.t.\ descends for split extensions, and f.r.t.\ goes up for separable extensions.  

\section{Tensor powers of bimodules and modules in tensor categories}
\label{two}

Let $H$ be a finite-dimensional Hopf algebra over a field $k$ with coproduct $\cop$, counit $\eps$ and antipode $S$.   Suppose $R$ is a Hopf subalgebra of $H$.  In this case
$\cop(R) \subseteq R \otimes R$ and $S(R) \subseteq R$.  Let
$R^+$ denote $\ker \eps$, the counit restricted to $R$; e.g., $r - \eps(r)1 \in R^+$ for each $r \in R$. Note that $R^+H$ is a coideal in $H$ as well as right ideal. Form the module coalgebra (and generalized quotient of a Hopf subalgebra) $V := H/R^+H$
where $\overline{h} := h + R^+H$ and $\cop$ induces the coproduct $\cop(\overline{h}a)
 =  \overline{h\1} a\1 \otimes \overline{h\2} a\2$ for every $h,a \in H$, an $H$-module morphism, as is the counit: $V$ is a coalgebra in the module category
$\M_H$.  

For the next lemma denote the category  of Hopf subalgebras of $H$ with arrows given by inclusion by $\mathcal{H}$.  The assignment $R \mapsto V := V_{(R)}$ defines on objects a functor $F: \mathcal{H} \rightarrow \M_H$, since given another Hopf subalgebra
$R \subseteq T \subseteq H$, one has $R^+H \subseteq T^+H$ and the canonical epi of $H$-modules, $ V_{(R)} \rightarrow V_{(T)}$ $= F(R \subseteq T)$.  

Let $A$ be an arbitrary algebra. Given a (finite) bimodule category ${}_A\M_H$
and bimodule $M \in {}_A\M_H$,
there is a bifunctor $G: (M, R) \mapsto M \otimes_R H$ of ${}_A\M_H \times \mathcal{H} \rightarrow {}_A\M_H$, where $G(f: M \rightarrow N, \, R \subseteq T)$ is the canonical epimorphism $f \otimes \id_H: M \otimes_R H \rightarrow N \otimes_T H$ of $A$-$H$-bimodules.  
 The next lemma gives a natural isomorphism between the functors $G$ and $I_{{}_A\M_H} \otimes F$, where $I_{{}_A\M_H} $ is the identity functor on the category of $A$-$H$-bimodules ${}_A\M_H$. 

\begin{lemma}
\label{lem-cordoba}
Let $M$ be an $A$-$H$-bimodule.  Then $M \otimes_R H \cong M \otimes V$ 
as $A$-$H$-bimodules via
$$ \tau_{M, R}: m \otimes_R h \mapsto mh\1 \otimes \overline{h\2},$$ a natural transformation  $\tau: G \stackrel{\cdot}{\rightarrow} I_{{}_A\M_H} \otimes F$.  In particular, there is a natural isomorphism of $H$-$H$-bimodules,
$H \otimes_R H \stackrel{\cong}{\longrightarrow} {}_HH_{\cdot} \otimes V$.  
\end{lemma}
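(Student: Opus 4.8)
The plan is to write down the candidate map $\tau_{M,R}$ explicitly, check that it is a well-defined $A$-$H$-bimodule map, and then produce a two-sided inverse built from the antipode. Naturality in both variables and the special case $M = {}_HH_H$ will then follow formally. Throughout I use Sweedler notation $\cop(h) = h_{(1)} \otimes h_{(2)}$ and the hypotheses $\cop(R) \subseteq R \otimes R$, $S(R) \subseteq R$ recorded just before the lemma.

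First I would verify that $\tau_{M,R}(m \otimes_R h) = mh_{(1)} \otimes \overline{h_{(2)}}$ descends to the $R$-balanced tensor product. For $r \in R$ one compares $\tau(mr \otimes_R h) = mrh_{(1)} \otimes \overline{h_{(2)}}$ with $\tau(m \otimes_R rh)$; expanding the latter by coassociativity gives $mr_{(1)}h_{(1)} \otimes \overline{r_{(2)}h_{(2)}}$, and since $r_{(2)} \in R$ one has $(r_{(2)} - \eps(r_{(2)})1)h_{(2)} \in R^+H$, so that $\overline{r_{(2)}h_{(2)}} = \eps(r_{(2)})\overline{h_{(2)}}$ in $V = H/R^+H$. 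The counit axiom $r_{(1)}\eps(r_{(2)}) = r$ then collapses this to $\tau(mr \otimes_R h)$, so $\tau$ is well defined. Left $A$-linearity is read off the formula, and right $H$-linearity follows from coassociativity together with the diagonal right action $(x \otimes \bar y)g = xg_{(1)} \otimes \overline{yg_{(2)}}$ on the tensor product in $\M_H$.

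The heart of the argument, and the step I expect to be the main obstacle, is the construction of the inverse. The natural candidate is $\tau^{-1}(m \otimes \bar h) = mS(h_{(1)}) \otimes_R h_{(2)}$, and the delicate point is that this must factor through the quotient $V$, i.e.\ vanish on $R^+H$. For $h = r^+g$ with $r^+ \in R^+$ one uses $S((r^+)_{(1)}) \in S(R) \subseteq R$ to pull this factor across the $R$-balanced tensor, leaving the coefficient $S((r^+)_{(1)})(r^+)_{(2)} = \eps(r^+)1 = 0$; this is exactly where $\cop(R) \subseteq R \otimes R$, $S(R) \subseteq R$ and $\eps(r^+)=0$ are used simultaneously. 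Granting well-definedness, the two composites reduce to the identity by the antipode identities $\sum h_{(1)}S(h_{(2)}) \otimes h_{(3)} = 1 \otimes h$ and $\sum S(h_{(1)})h_{(2)} \otimes h_{(3)} = 1 \otimes h$, which give $\tau^{-1}(\tau(m \otimes_R h)) = m \otimes_R h$ and $\tau(\tau^{-1}(m \otimes \bar h)) = m \otimes \bar h$ respectively.

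Finally I would check naturality of $\tau$ by testing the relevant square on a morphism $(f\colon M \to N,\ R \subseteq T)$: both composites send $m \otimes_R h$ to $f(m)h_{(1)} \otimes \overline{h_{(2)}}$ in $N \otimes V_{(T)}$, using that $f$ is an $A$-$H$-bimodule map and that $F(R \subseteq T)$ is the canonical projection $V_{(R)} \to V_{(T)}$. The concluding assertion is then the specialization $A = H$, $M = {}_HH_H$, yielding the $H$-$H$-bimodule isomorphism $H \otimes_R H \cong {}_HH_\cdot \otimes V$.
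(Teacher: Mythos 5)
Your proposal is correct and follows essentially the same route as the paper: the same formula for $\tau_{M,R}$, the same inverse $m \otimes \overline{h} \mapsto mS(h_{(1)}) \otimes_R h_{(2)}$ justified by pulling $S(r_{(1)}) \in R$ across the balanced tensor and invoking $\eps(r^+)=0$, and the same naturality square. The only difference is that you spell out the composite-is-identity computations via the antipode axioms, which the paper leaves implicit.
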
 
\begin{proof}
Since $\overline{rh} = \eps(r)\overline{h}$ for every $r \in R$, this mapping is well-defined. Of course
the mapping is a left $A$-module mapping.  Recall that in the category of $H$-modules the tensor product of two
modules $U,W$ has $H$-module structure given by $(u \otimes w)h = uh\1 \otimes wh\2$ for every $u \in U, w \in W, h \in H$
(sometimes denoted by $U_{\cdot} \otimes W_{\cdot}$); the mapping in the lemma is a right $H$-module morphism where this is the $H$-module structure on $M \otimes V$.  

The mapping above has inverse mapping given by $$m \otimes \overline{h} \mapsto mS(h\1) \otimes_R h\2,$$ which is well-defined since $S(r\1 h\1) \otimes_R r\2 h\2 = S(h\1) \otimes_R \eps(r)h\2$
for all $r \in R, h \in H$.

  The naturality follows from the following commutative square of homomorphisms
of $A$-$H$-bimodules where the horizontal mappings are defined above, the vertical mappings
are induced from any choice of $f: M \rightarrow N$ in ${}_A\M_H$, the identity mapping on $H$
and $F(R \subseteq T)$:  
\begin{equation}
\label{eq: naturality}
\begin{array}{ccc}
M \otimes_R H  & \stackrel{\cong}{\longrightarrow} &  M_{\cdot} \otimes V_{(R)\cdot} \\
\downarrow      &                                                 & \downarrow \\
N \otimes_T H & \stackrel{\cong}{\longrightarrow} & N_{\cdot} \otimes V_{(T)\cdot} 
\end{array} 
\end{equation}
the diagram is commutative since $m \otimes_R h' \mapsto f(m){h'}\1 \otimes \overline{{h'}\2}$ in either
factorization.  
\end{proof}

A lemma may be similarly established for a bimodule ${}_HN_A$ where $H \otimes_R N \cong H/HR^+ \otimes N$ via a mapping $h \otimes_R n \mapsto \overline{h\1} \otimes h\2 n$.  
 
Note that 
$\dim V = \frac{\dim H}{\dim R}$ by an application of the Nichols-Zoeller freeness theorem;
in fact,  $H \cong R \otimes V$ as left $R$-modules (and right $V$-comodules, see \cite[Ch.\ 8]{M}).

\begin{lemma}
\label{lemma-right}
The right $H$-module $V = H/R^+H \cong t_RH$ where $t_R$ is any nonzero right integral in $R$.
\end{lemma}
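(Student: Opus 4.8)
The plan is to exhibit the isomorphism through the evident right $H$-module map given by left multiplication, $\rho: H \to t_RH$, $\rho(h) = t_R h$, and then to pin down its kernel by a dimension count. First I would record that a right integral satisfies $t_R r = \eps(r) t_R$ for all $r \in R$; in particular $t_R r = 0$ whenever $r \in R^+ = \Ker(\eps|_R)$, so that $t_R R^+ = 0$ and hence $t_R R^+H = 0$. Thus $R^+H \subseteq \Ker \rho$, and since $\rho(hh') = t_R h h' = \rho(h)h'$ the map $\rho$ is a right $H$-module homomorphism that descends to a surjective right $H$-module map $\bar\rho: V = H/R^+H \to t_RH$, $\overline{h} \mapsto t_R h$.

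It then remains only to prove $\bar\rho$ injective, and since both spaces are finite-dimensional it suffices to show $\dim t_RH = \dim V = \dim H/\dim R$. Here I would invoke the Nichols--Zoeller freeness already recorded above, namely $H \cong R \otimes V$ as left $R$-modules, where $R$ acts on the left tensor factor by $r \cdot (s \otimes v) = rs \otimes v$. Because this isomorphism intertwines left multiplication by any element of $R$, the subspace $t_RH$ corresponds to the image of left multiplication by $t_R$ on $R \otimes V$; but $t_R \cdot (s \otimes v) = (t_R s) \otimes v = \eps(s)\, t_R \otimes v$, so that image is $k\, t_R \otimes V$, of dimension exactly $\dim V$. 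Hence $\dim t_RH = \dim V$, and a surjection between finite-dimensional vector spaces of equal dimension is bijective, so $\bar\rho$ is an isomorphism of right $H$-modules.

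I expect the one substantive step to be the dimension computation $\dim t_RH = \dim V$; everything else is the routine check that $\rho$ is a well-defined right $H$-module epimorphism annihilating $R^+H$. That computation leans essentially on the freeness of $H$ over $R$, without which one would be forced to argue directly that $t_R h = 0$ implies $h \in R^+H$, a considerably less transparent claim.
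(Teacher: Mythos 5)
Your proof is correct and rests on the same essential input as the paper's: the Nichols--Zoeller freeness of $H$ over $R$ together with the right-integral identity $t_R r = \eps(r)t_R$. The only difference is cosmetic --- the paper establishes injectivity directly by writing $h = \sum_{i=1}^q r_i h_i$ in a basis of the free module ${}_RH$ and deducing from $t_Rh = 0$ that every $\eps(r_i)=0$, hence $h \in R^+H$, whereas you reach the same conclusion by computing $\dim t_RH = \dim V$ from the decomposition $H \cong R \otimes V$ and invoking surjectivity plus equal finite dimensions.
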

\begin{proof}
Note the epimorphism $V \rightarrow t_R H$, $\overline{h} \mapsto t_Rh$. Let $q = \dim V$ and $h_1,\ldots,h_q \in H$ be a basis for the free module ${}_RH$.  Suppose $h = \sum_{i=1}^q r_i h_i$ and
$t_Rh = 0$, equivalently $\sum_i \eps(r_i) t_Rh_i = 0$, then $\eps(r_i)t_R = 0$, so $\eps(r_i) = 0$ for each $i = 1,\ldots, q$.  Then $h \in R^+H$.  It follows that $V \rightarrow t_RH$ is also injective.
\end{proof}

At this point, it is informative to extend \cite[Prop.\ 2.6]{LK2013}  from its hypothesis of unimodularity on $H$, with thanks to C.\ Young for the  $\iota$-method. 

\begin{cor}
\label{cor-hsepHopf}
Suppose $R\subseteq H$ is a  Hopf subalgebra of h-depth $1$.  Then $R = H$.
\end{cor}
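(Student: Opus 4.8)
The plan is to unwind the definition of h-depth~$1$ for a Hopf subalgebra and feed it into the bimodule isomorphism established in Lemma~\ref{lem-cordoba}. By the remarks in Section~2, the subalgebra $R \subseteq H$ has h-depth~$1$ precisely when $H^{\otimes_R 2} \sim H^{\otimes_R 1} = H$ as $H$-$H$-bimodules, i.e., when $H \otimes_R H \,\|\, q\, H$ as $H^e$-modules for some $q \in \N$; this is the H-separability condition recalled just after the four depth conditions. I would start there, so that all the depth machinery is translated into a single divisibility statement about $H \otimes_R H$.

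\medskip
Next I would apply Lemma~\ref{lem-cordoba} with $M = {}_HH_H$ to rewrite $H \otimes_R H \cong {}_HH_{\cdot} \otimes V$ as $H$-$H$-bimodules, where $V = H/R^+H$ and the tensor is taken in $\M_H$. The h-depth~$1$ condition then reads ${}_HH_{\cdot} \otimes V \,\|\, q\, {}_HH_H$ as $H$-$H$-bimodules. The natural way to extract information is to strip off the left $H$-action, which is untouched by the $\otimes V$, and compare dimensions of the underlying right $H$-modules, or better, compare the right $H$-module $V$ against the trivial (counit) module. Since the free left $H$-module structure is the same on both sides, the essential content is a statement about $V$ as an object of $\M_H$ relative to the unit object $k_\eps$ of the tensor category. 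Concretely, I expect ${}_HH \otimes V \,\|\, q\, {}_HH$ to force $\dim V = 1$, because $H \otimes -$ is faithful and the only way a free module times $V$ can divide a multiple of the free module itself (with the diagonal right action) is if $V$ is one-dimensional, i.e., $V \cong k_\eps$.

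\medskip
Once $\dim V = 1$ is in hand, the conclusion $R = H$ is immediate from the dimension formula recorded after Lemma~\ref{lem-cordoba}, namely $\dim V = \dim H / \dim R$ via Nichols--Zoeller; $\dim V = 1$ gives $\dim R = \dim H$, and since $R \subseteq H$ this forces $R = H$. Alternatively I could argue directly that $V \cong k_\eps$ means $R^+H = \ker\eps$ has codimension~$1$, so $R^+H$ is the augmentation ideal $H^+$, whence $R^+$ and $H^+$ have the same dimension and $R = H$. The ``$\iota$-method'' attributed to C.~Young, which the remark flags as the device that removes the unimodularity hypothesis of \cite[Prop.\ 2.6]{LK2013}, presumably supplies exactly the map needed to make the divisibility-to-dimension implication rigorous without assuming $H$ unimodular.

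\medskip
The main obstacle I anticipate is the middle step: turning the bimodule divisibility ${}_HH_{\cdot} \otimes V \,\|\, q\, {}_HH_H$ into the clean statement $\dim V = 1$. Counting dimensions alone only yields $\dim(H \otimes V) = q\,\dim H$, i.e.\ a constraint on $q$, not on $V$; one genuinely needs to exploit the bimodule structure on both sides. The earlier treatment apparently required $H$ unimodular to control how the right integral $t_R$ (cf.\ Lemma~\ref{lemma-right}, which identifies $V \cong t_R H$) interacts with the antipode and the two-sided structure. My expectation is that the $\iota$-method replaces that integral computation with a functorial map $\iota$ (perhaps a comparison between $V$ and $k_\eps$, or a unit/counit of an adjunction) whose naturality forces $V$ to be invertible in the tensor category and hence one-dimensional; reconstructing or invoking that map correctly is the delicate point, whereas the surrounding reductions and the final dimension argument are routine.
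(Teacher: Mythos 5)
Your reduction is the same as the paper's: h-depth $1$ means $H \otimes_R H \,\|\, qH$ as $H$-$H$-bimodules, Lemma~\ref{lem-cordoba} converts this to ${}_HH_{\cdot} \otimes V \,\|\, q\,{}_HH_H$, and once $\dim V = 1$ is known, $\dim V = \dim H/\dim R$ (Nichols--Zoeller) gives $R = H$. But the entire content of the corollary lives in the step you leave as an ``expectation,'' and what you offer in its place does not hold up. The assertion that ``the only way a free module times $V$ can divide a multiple of the free module itself is if $V$ is one-dimensional'' is false for either one-sided structure separately (as a right $H$-module, ${}_HH_{\cdot}\otimes V \cong H^{\dim V}$ always, by the fundamental theorem of Hopf modules, so the divisibility is automatic there), and for the bimodule structure it is precisely the H-separability condition whose failure for $R \neq H$ is what must be proved. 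Your fallback guess --- that $\iota$ is some natural unit/counit map forcing $V$ to be invertible --- misreads the situation: in the paper $\iota$ is simply a name for the assumed bimodule monomorphism $H \otimes V \hookrightarrow qH$, and nothing about invertibility of $V$ is used or established.

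The paper's actual mechanism, which your proposal is missing, is an integral computation. Take a nonzero right integral $t_H \in \int_H^r$ with modular function $\alpha$ (so $ht_H = \alpha(h)t_H$). The left $H$-linearity of $\iota$ gives $h\,\iota(t_H \otimes v) = \alpha(h)\,\iota(t_H \otimes v)$, so $\iota(t_H \otimes V)$ lands in the $q$-dimensional space $q\int_H^r$, each component being the one-dimensional space of left $\alpha$-integrals $kt_H$ in the augmented Frobenius algebra $(H,\alpha)$. Then the diagonal right action, together with $t_H h = \eps(h)t_H$, gives $\iota(t_H \otimes v)h = \iota(t_H \otimes vh)$, while membership in $q\int_H^r$ gives $\iota(t_H \otimes v)h = \eps(h)\iota(t_H \otimes v)$; injectivity of $\iota$ then yields $vh = \eps(h)v$ for all $v, h$. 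Since $V$ is cyclic, generated by $\overline{1_H}$, this forces $\overline{h} = \eps(h)\overline{1_H}$ and hence $\dim V = 1$. Without some argument of this kind (this is exactly the ``$\iota$-method'' you allude to but do not reconstruct), your proof does not go through. As a minor further point, your alternative ending --- from $R^+H = H^+$ inferring that $R^+$ and $H^+$ have the same dimension --- is also unjustified as stated; the dimension count should go through $\dim V = \dim H/\dim R$ as in your primary route.
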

\begin{proof}
Assume that $\iota: H \otimes V \into qH$ is an $H$-$H$-bimodule monomorphism, which must exist
since $H \otimes_R H \| qH$ for some $q \in \N$.  Let $t_H$ be a nonzero right integral in the space of integrals $\int^r_H$; let $\alpha \in H^*$ be its modular function, an augmentation of $H$ defined by $xt_H = \alpha(x)t_H$.
Then for each $v \in V$ and $h \in H$, $h\iota(t_H \otimes v) = \iota(ht_H \otimes v) = \alpha(h) \iota (t_H \otimes v)$, which implies that $\iota(t_H \otimes V) \subseteq q \int^r_H$, since each of the $q$  component is an augmented Frobenius algebra $(H,\alpha)$ with $1$-dimensional space of left integrals $k t_H$ \cite{K}; it follows that
$q \geq \frac{\dim H}{\dim R}$.  Also $\iota(t_H \otimes v)h = \iota(t_H \eps(h\1) \otimes vh\2) = \iota(t_H \otimes vh)$,
then since $\iota(t_H \otimes V) \subseteq q\int_H^r$, it follows
that $\iota(t_H \otimes vh) = \iota(t_H \otimes v\eps(h))$, whence
$vh = v\eps(h)$ for every $v \in V, h \in H$.  In particular,
$\overline{1_H}h = \overline{h} = \eps(h) \overline{1_H}$ for each $h \in H$, so that $\dim V = 1$.  Hence $\dim R = \dim H$.    
\end{proof}
  In the two extreme cases of Hopf subalgebra, when $R = H$ and $R = k1_H$, we have in the first case $V = k_{\eps}$, which 
is simple (but not projective unless $H$ is semisimple), and in the second case, $V = H$, which is free over $H$ or $R$ (and therefore not semisimple unless $H$ or $R$ is so).  

If $R$ is normal, or ad-stable in $H$, then $R^+H = HR^+$ so $V = H/HR^+$ is a trivial right
$R$-module (given by the counit).  Then taking $M = H$ in the lemma, $H \otimes_R H \cong H \otimes V \cong H^{\dim V}$ as $H$-$R$-bimodules, the right depth two condition.  The left depth two condition is similarly obtained from the variant of the lemma just mentioned; the converse that a left
(right) depth two Hopf subalgebra is right (left) ad-stable is shown in \cite{BK}.  

Note that $V \cong k \otimes_R H$, since the annihilator ideal of the $R$-module $k$ is $R^+$.  It follows that the $H$-module $V$ is the induced module of the one-dimensional trivial $R$-module;  thus, $V_H$ is relatively $R$-projective.  

\begin{example}
\label{example-subgroups}
\begin{rm}
Consider the group algebras $R = k[\mathcal{H}]$ and $H = k[\mathcal{G}]$ where $\mathcal{H} \subseteq \mathcal{G}$
is here the notation for a subgroup of a finite group.  Then $V \cong k[\mathcal{H}\setminus \mathcal{G}]$ the permutation module of right cosets (via $\overline{g} \mapsto \mathcal{H}g$). If the ground field $k = \C$,  the
character of $V$ is the induced principal character $\eta := {1_{\H}}^{\G}$.  
\end{rm}
\end{example} 
Because of the example, we might refer to the right $H$-module $V$ as the \textit{generalized permutation module} of the Hopf subalgebra pair $R \subseteq H$. 
\begin{theorem}
\label{prop-Rsemisimple}
The right $H$-module $V$ is projective if and only if $R$ is semisimple if and only if $V_R$ is projective.
\end{theorem}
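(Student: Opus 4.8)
The plan is to funnel all three conditions through the single criterion that the trivial right $R$-module $k_{\eps}$ is projective, which by Maschke's theorem for Hopf algebras \cite{M} is equivalent to $R$ being semisimple. The crux is the observation that $k_{\eps}$ is always a direct summand of the restriction $V_R$, regardless of whether $R$ is semisimple. Once that is in place, everything reduces to transporting projectivity along induction and restriction between $\M_R$ and $\M_H$.

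First I would produce this splitting at the level of $R$-modules. The map $\iota \colon k_{\eps} \to V_R$ sending $1 \mapsto \overline{1_H}$ is right $R$-linear, since $\overline{1_H}\, r = \overline{r} = \eps(r)\overline{1_H}$ for all $r \in R$ (because $r - \eps(r)1_H \in R^+ \subseteq R^+H$). On the other side, the counit $\eps \colon H \to k$ annihilates $R^+H$ and is right $R$-linear, so it descends to $\bar\eps \colon V \to k_{\eps}$, $\overline{h} \mapsto \eps(h)$, with $\bar\eps \circ \iota = \id_{k_{\eps}}$. Hence $k_{\eps}$ is a direct summand of $V_R$, i.e.\ $k_{\eps} \| V_R$.

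With this in hand the theorem follows from a cycle of implications. If $V_R$ is projective, then its summand $k_{\eps}$ is projective; splitting the surjection $\eps \colon R_R \to k_{\eps}$ then yields a right integral $t \in R$ with $t r = \eps(r)t$ and $\eps(t) = 1$, so $R$ is semisimple. Conversely, if $R$ is semisimple then $k_{\eps}$ is projective over $R$, and since induction $- \otimes_R H$ carries projectives to projectives (it sends $R_R$ to $H_H$), the module $V \cong k_{\eps} \otimes_R H$ is projective over $H$. Finally, if $V_H$ is projective then $V_R$ is projective, because $H$ is free as a right $R$-module by Nichols--Zoeller and hence restriction $\M_H \to \M_R$ preserves projectivity. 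The only direction carrying real content is that projectivity of $V_R$ forces semisimplicity of $R$; the main obstacle is spotting the retraction $\bar\eps$ that splits off the trivial module, after which Maschke's theorem closes the argument.
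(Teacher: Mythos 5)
Your proof is correct, but it takes a genuinely different route from the paper's for the one direction that carries content. The paper proves ``$V_H$ projective $\Rightarrow R$ semisimple'' by splitting the sequence $0 \to R^+H \to H \to V \to 0$ in $\M_H$, extracting an idempotent $e=\sigma(\overline{1})$ with $er=\eps(r)e$, and then using dual bases $\{h_i\},\{f_i\}$ of the free module $H_R$ to show each $f_i(e)$ is a right integral, whence $e \in Ht_R$ and $\eps(t_R)\neq 0$; it then separately deduces $V_R$ projective $\Leftrightarrow V_H$ projective, the nontrivial direction via relative $R$-projectivity of $V$. You instead observe that $k_{\eps} \| V_R$ (split off by the counit $\eps_V$ of the module coalgebra $V$ against $1 \mapsto \overline{1_H}$ --- a fact the paper itself records later, in Proposition~\ref{prop-sepext}, but does not use here), so projectivity of $V_R$ passes to $k_{\eps}$ and splits $\eps\colon R_R \to k_{\eps}$ directly inside $R$, producing a right integral with $\eps(t)=1$ and invoking Maschke with no reference to $H$ or dual bases. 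Arranging the three conditions in a cycle ($V_R$ proj.\ $\Rightarrow R$ ss.\ $\Rightarrow V_H$ proj.\ $\Rightarrow V_R$ proj.) also lets you replace the relative-projectivity argument by the trivial fact that restriction along the free extension $H_R$ preserves projectives. Your version is shorter and more elementary; the paper's version buys the explicit idempotent $e$ with $V \cong eH$ and the relation $e \in Ht_R$, which it reuses in the discussion of projective covers (Proposition~\ref{prop-pc}).
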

\begin{proof}
($\Leftarrow$) If $R$ is semisimple, then all $R$-modules are projective.  Since $V \cong \mathrm{Ind}^H_R k$, it follows that $V_H$ is projective. (Alternatively, choose $t_R$ in the lemma above to be an idempotent.)

($\Rightarrow$) If $V$ is projective, then the short exact sequence 
\begin{equation}
\label{eq: H-module sequence}
0 \rightarrow R^+H \rightarrow H \stackrel{\pi}{\rightarrow} V \rightarrow 0
\end{equation}
 splits, where $\pi(h) = \overline{h}$; let $\sigma: V_H \rightarrow H_H$ satisfy $\pi \circ \sigma = \id_V$. Consider $e := \sigma(\overline{1})$. Then $e^2 = e$ since $\overline{h} = \overline{h'}$ iff $h-h' \in R^+H$, $\overline{e} = \overline{1}$ and 
$\sigma(\overline{1})e = \sigma(\overline{e}) = \sigma(\overline{1})$.
It follows that $V \cong eH$ via $\overline{h} \mapsto eh$.
Note that for every $r \in R$ we have $er = e\eps(r)$, since $r - \eps(r)1_H \in R^+H$.  Also note that from $e - 1 \in R^+H$ it follows that $\eps(e) = 1$.

Let $\{ h_i \}$, $\{ f_i: H_R \rightarrow R_R \}$ be dual bases for the free module $H_R$.  Then for each $i$, $f_i(e)$ is a right integral of $R$, thus $f_i(e)=c_i t_R$ for some scalar $c_i$ and a fixed nonzero right integral $t_R$ in $R$.  Then $e = \sum_i h_i f_i(e) = (\sum_i c_i h_i)t_R$, so that $\eps(t_R) \neq 0$ $\stackrel{\textrm{Maschke}}{\Rightarrow}$ $R$ is semisimple \cite[2.2.1]{M}.   

Finally, if $V_H$ is projective, so is $V_R$ since $H_R$ is free.  If $V_R$ is projective, then $V \otimes_R H$ is a projective $H$-module.  But the natural epimorphism $V \otimes_R H \rightarrow V$
is $R$-split, so $H$-split since $V$ is relatively $R$-projective.  Then $V_H$ is projective. 
\end{proof}

For example, Lorenz has shown that if $H$ is an involutory and non-semisimple Hopf algebra where $\mathrm{char}\, k = p$, then $p$ divides the dimension of any projective $H$-module \cite{Lo}:
thus if $H$ is a finite group algebra $k[\G]$, then $p | |\G|$, and if a Hopf subalgebra $k[\H]$, where $\H \leq \G$, has projective permutation $\G$-module $V$, then $p | \dim V = |\G:\H |$;  indeed consistent with the proposition since $k[\H]$ is semisimple
iff $p \not\! \| |\H|$.  

Note that the short exact sequence~(\ref{eq: H-module sequence}) may be derived from the induction functor, $-\otimes_R H$ (where ${}_RH$ is faithfully flat) applied to the $R$-module sequence:
\begin{equation}
0 \rightarrow R^+ \rightarrow R \stackrel{\eps}{\rightarrow} k \rightarrow 0 .
\end{equation}

Let $H \otimes_R \cdots \otimes_R H$ ($n$ times $H$) be denoted by $H^{\otimes_R n}$, a natural $H$-$H$-bimodule for each $n \geq 1$.  

\begin{prop}
\label{prop-tensorequation}
For any right $H$-module $W$ and each integer $n \geq 1$, there is a natural isomorphism of right $H$-modules,
\begin{equation}
\label{eq: utothen}
W \otimes_R H^{\otimes_R n} \cong W \otimes V^{\otimes n}.
\end{equation}
 In particular, 
\begin{equation}
\label{eq: fundament}
H^{\otimes_R n} \cong  H \otimes V^{\otimes (n-1)}
\end{equation}
 as $H$-$H$-bimodules, a natural isomorphism with respect to the category of Hopf subalgebras of $H$.    
\end{prop}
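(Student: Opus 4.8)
The plan is to prove the general isomorphism~(\ref{eq: utothen}) by induction on $n$, using Lemma~\ref{lem-cordoba} as the base and inductive engine, and then to extract~(\ref{eq: fundament}) as the special case $W = {}_HH_H$ with the index shifted by one.

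First I would set up the induction on $n \geq 1$. For the base case $n = 1$, the claim is $W \otimes_R H \cong W \otimes V$ as right $H$-modules, which is exactly the content of Lemma~\ref{lem-cordoba} specialized to the situation where $W$ plays the role of the bimodule $M$: since $W$ is a right $H$-module we may forget any left structure and read $\tau_{W,R}\colon w \otimes_R h \mapsto wh\1 \otimes \overline{h\2}$ as the desired right $H$-module isomorphism, with inverse $w \otimes \overline{h} \mapsto wS(h\1) \otimes_R h\2$. For the inductive step, assume~(\ref{eq: utothen}) holds for some $n \geq 1$ and all right $H$-modules. I would then rewrite
\begin{equation*}
W \otimes_R H^{\otimes_R (n+1)} = \bigl( W \otimes_R H^{\otimes_R n} \bigr) \otimes_R H,
\end{equation*}
apply the inductive hypothesis to the parenthesized factor to get $(W \otimes V^{\otimes n}) \otimes_R H$, and then apply the base case~(\ref{eq: utothen}) for $n=1$ with the right $H$-module $W \otimes V^{\otimes n}$ in place of $W$, obtaining $(W \otimes V^{\otimes n}) \otimes V \cong W \otimes V^{\otimes(n+1)}$. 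The composite is a right $H$-module isomorphism, completing the induction.

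The main obstacle, and the step I would check most carefully, is the compatibility of the tensor-over-$R$ structure with the diagonal tensor structure in $\M_H$ during the inductive step. Concretely, I must verify that applying Lemma~\ref{lem-cordoba} to the module $W \otimes V^{\otimes n}$ interacts correctly with the iterated coproduct: the isomorphism $\tau$ sends $x \otimes_R h \mapsto xh\1 \otimes \overline{h\2}$, and when $x = w \otimes v_1 \otimes \cdots \otimes v_n$ lies in $W \otimes V^{\otimes n}$, the right action $xh\1$ is itself the diagonal action $wh\1\1 \otimes v_1 h\1\2 \otimes \cdots$, so coassociativity of $\cop$ is what guarantees that the nested applications of $\tau$ assemble into the clean expression $W \otimes V^{\otimes(n+1)}$ with no ambiguity. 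This is routine but is precisely where the module-coalgebra structure of $V$ and the finite tensor category axioms are used.

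Finally, to obtain~(\ref{eq: fundament}) I would take $W = {}_HH_H$, so that the left $H$-action survives all the isomorphisms (each $\tau_{M,R}$ in Lemma~\ref{lem-cordoba} is a left $A$-module map, here with $A = H$), giving $H^{\otimes_R (n+1)} = H \otimes_R H^{\otimes_R n} \cong H \otimes V^{\otimes n}$ as $H$-$H$-bimodules; reindexing $n \mapsto n-1$ yields $H^{\otimes_R n} \cong H \otimes V^{\otimes(n-1)}$ for $n \geq 1$. Naturality with respect to the category $\mathcal{H}$ of Hopf subalgebras follows by assembling the naturality squares~(\ref{eq: naturality}) from Lemma~\ref{lem-cordoba} at each stage of the induction, since a morphism $R \subseteq T$ induces the functorial epimorphism $F(R \subseteq T)\colon V_{(R)} \to V_{(T)}$ compatibly with the canonical maps $H^{\otimes_R n} \to H^{\otimes_T n}$.
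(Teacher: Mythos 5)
Your proposal is correct and follows essentially the same route as the paper: induction on $n$ with Lemma~\ref{lem-cordoba} applied at $M=W$ for the base case and at $M = W\otimes V^{\otimes(n-1)}$ for the inductive step, then specializing $W={}_HH_H$ and invoking the naturality squares of the lemma. Your extra check on coassociativity in the inductive step is a sound (if unstated in the paper) verification, not a departure.
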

\begin{proof}
The statement is true for $n=1$ by Lemma~\ref{lem-cordoba} with $M = W$. Then by induction on $n >1$,
$$ W \otimes_R H^{\otimes_R n} \cong (W \otimes_R H^{\otimes_R (n-1)}) \otimes_R H \cong (W \otimes V^{\otimes (n-1)}) \otimes_R H \cong W \otimes V^{\otimes n}$$
where we apply Lemma~\ref{lem-cordoba} to  the last isomorphism in the case $M = W \otimes V^{\otimes (n-1)}$. Naturality follows from  Lemma~\ref{lem-cordoba} as well, which yields Eq.~(\ref{eq: fundament}) as an $A$-$H$-bimodule isomorphism in case $W$ is an $A$-$H$-bimodule.  Naturality also follows from a glance at the explicit formula for the isomorphism below.  
 \end{proof}
 
Either isomorphism in the proposition is given by 
\begin{equation}
\label{eq: forward}
x \otimes y \otimes \cdots \otimes z \mapsto
xy\1 \cdots z\1 \otimes \overline{y\2 \cdots z\2} \otimes \cdots \otimes \overline{z_{(n)}},
\end{equation}
with inverse mapping given by
\begin{equation}
\label{eq: inverse}
 u \otimes \overline{v} \otimes \overline{w} \otimes \cdots \longmapsto
uS(v\1) \otimes_R v\2 S(w\1) \otimes_R w\2 \cdots .
\end{equation}

Let $U_R: \M_H \rightarrow \M_H$ be the functor defined by restriction, then induction, i.e.,
$U_R(W) = W \otimes_R H$.  Then by Lemma~\ref{lem-cordoba}, $U_R$ is naturally isomorphic to the endofunctor $F_V$ of $\M_H$
defined by $W \mapsto W \otimes V$; and their iterated composites
are naturally isomorphic, ${U_R}^n(W) \cong {F_V}^n(W)$ for each $W \in \M_H$
and $n \geq 0$, where $V^{\otimes 0} := k_{\eps}$.  For example, if the ground field has characteristic zero, we think in terms of characters and 
of $U_R$ as given by $U(\chi_W) = \mathrm{Ind}^H_R\mathrm{Res}^H_R \chi_W = \chi_W\! \downarrow_R \uparrow^H$, Eq.~(\ref{eq: utothen}) implies ${U_R}^n$ is equal 
to multiplication of characters of $H$ by ${\chi_V}^n$, as expressed by Eq.~(\ref{eq: basic}) below.

\begin{example}
\begin{rm}
Suppose $R \subseteq H$ be a Hopf subalgebra pair of \textit{semisimple complex} Hopf algebras,  with $\chi$ a character of $R$ and
$\eta$ a character of $H$.  By  \cite[Lemma 5.1]{BKK}, one has 
\begin{equation}
\label{eq: Isaacs}
\chi \! \uparrow^H \! \eta \, = \, (\chi \eta \!\downarrow_R)\!\uparrow^H.
\end{equation}
Let $\chi_V$ denote the character of the generalized permutation module $V$, and $U_R$
act on any character $\eta$ of $H$ as before by $U_R(\eta) = \eta \!\downarrow_R \!\uparrow^H$: then
$\chi_V = \eps_R\! \uparrow^H$ as we noted subsequent to Example~\ref{example-subgroups}.  
Then by Eq.~(\ref{eq: Isaacs}),  $\chi_V^2 = (\eps_R (\eps_R \!\uparrow^H\! \downarrow_R)) \uparrow^H = U(\chi_V)$, 
and inductively one shows ${\chi_V}^{n+1} = {U_R}^n(\chi_V)$.  This is also implied by Eq.~(\ref{eq: utothen}), which in this
example becomes 
\begin{equation}
\label{eq: basic}
{U_R}^n(\chi_W) = \chi_W \chi_V^n
\end{equation} for all $n \geq 0$ (also noted on \cite[p.\ 151]{BKK}).
\end{rm}
\end{example}

\begin{prop}
\label{prop-sepext}
If $W$ is a finite-dimensional right $H$-module coalgebra, or dually a left $H$-module algebra,  then $W^{\otimes n} \| W^{\otimes (n+1)}$ for each $n \geq 1$
as $H$-modules (if $H$ is semisimple, $n \geq 0$).  In particular, this applies to $V$, which additionally satisfies $k_R \| V$.  
\end{prop}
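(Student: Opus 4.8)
The plan is to use the coalgebra structure of $W$ inside the tensor category $\M_H$ to build an explicit $H$-split monomorphism $W^{\otimes n}\to W^{\otimes(n+1)}$, in direct analogy with the ``unit and multiplication'' argument that gives $A^{\otimes_B n}\|A^{\otimes_B(n+1)}$ in Section~2. Since $W$ is a coalgebra in $\M_H$ it carries a comultiplication $\cop_W\colon W\to W\otimes W$ and a counit $\eps_W\colon W\to k_\eps$, both right $H$-module maps (here $W\otimes W$ has the diagonal action $(u\otimes w)h=uh\1\otimes wh\2$ and $k_\eps$ is the counit representation, the unit object of $\M_H$). The counit axiom for a coalgebra object says $\lambda\circ(\eps_W\otimes\id_W)\circ\cop_W=\id_W$, where $\lambda\colon k_\eps\otimes W\rarr{\cong}W$ is the unit isomorphism; hence $\cop_W$ is a split monomorphism of $H$-modules with retraction $p:=\lambda\circ(\eps_W\otimes\id_W)$, which already yields $W\|W^{\otimes 2}$.

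To pass to general $n$ I would apply the endofunctor $W^{\otimes(n-1)}\otimes(-)$ of $\M_H$ to the split pair $(\cop_W,p)$: this functor sends the identity $p\circ\cop_W=\id_W$ to $(\id\otimes p)\circ(\id\otimes\cop_W)=\id_{W^{\otimes n}}$, so $\id\otimes\cop_W\colon W^{\otimes n}\to W^{\otimes(n+1)}$ is again $H$-split (with retraction $\id\otimes p$), giving $W^{\otimes n}\|W^{\otimes(n+1)}$ for every $n\geq1$. The dual case of a left $H$-module algebra $W$ runs identically with the roles reversed: the unit $\eta\colon k_\eps\to W$ serves as the section and the multiplication $\mu\colon W\otimes W\to W$ as the retraction, the algebra unit axiom $\mu\circ(\eta\otimes\id_W)=\id_W$ supplying the splitting.

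For the semisimple refinement to $n=0$, note $W^{\otimes 0}=k_\eps$, so the claim becomes $k_\eps\|W$. The counit $\eps_W$ is an epimorphism in $\M_H$ (a nonzero coalgebra has surjective counit), and when $H$ is semisimple every such epimorphism splits, exhibiting $k_\eps$ as a direct summand of $W$. Finally, the proposition applies to $V=H/R^+H$ because it was already shown to be a coalgebra in $\M_H$. The extra assertion $k_R\|V$ I read in $\M_R$: restricting $\eps_V\colon V\to k_R$ to $R$, the class $\overline{1_H}$ satisfies $\overline{1_H}\,r=\overline{r}=\eps(r)\overline{1_H}$ for all $r\in R$ (since $r-\eps(r)1_H\in R^+\subseteq R^+H$), so $1\mapsto\overline{1_H}$ is a right $R$-module section of $\eps_V$ and $k_R$ is a direct summand of $V|_R$ --- precisely the $n=0$ instance in $\M_R$.

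I expect the only delicate point to be the categorical bookkeeping: checking that $\cop_W,\eps_W$ (resp.\ $\eta,\mu$) are honest morphisms of $\M_H$ for the diagonal action, which is exactly the module-coalgebra (resp.\ module-algebra) compatibility, and applying the unit-object identifications consistently so that the counit axiom genuinely furnishes a one-sided inverse. Once that is in place, both the tensoring-up step and the semisimple splitting are routine.
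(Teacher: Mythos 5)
Your proof is correct and follows essentially the same route as the paper: the counit-split comultiplication tensored up to $W^{\otimes n}$, the split epi $\eps_W$ in the semisimple case, and the section $\lambda\mapsto\lambda\overline{1}$ of $\eps_V$ over $R$. Your treatment of the module-algebra case via the unit and multiplication is exactly the ``face and degeneracy maps'' alternative the paper itself offers alongside its dualization argument.
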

\begin{proof}
Notice that the coproduct  $\cop_W: W \rightarrow W \otimes W$ is a split monomorphism of $H$-modules with respect to the counit $\eps_W$; hence, $\cop_W \otimes \id^{\otimes (n-1)}: W^{\otimes n} \rightarrow W^{\otimes (n+1)}$
is a split monic for each $n$.  Note that $\eps_W: W \rightarrow k$ is an epi of $H$-modules, which is split if $H$ is semisimple (equivalently, $k_H$ is projective).  

If $W$ is a right $H$-module coalgebra, then observe that $W^*$ is a left $H$-module algebra via
the dual algebra structure and the left $H$-module structure 
$\bra hw^*, w \ket = \bra w^*, wh \ket$ for all $w^* \in W$, $w \in W$ and $h \in H$.
It is not hard to show that $(W^*)^{\otimes n} \cong (W^{\otimes n})^*$ from which is follows from   
$W^{\otimes n} \| W^{\otimes (n+1)}$  that $(W^*)^{\otimes n} \| (W^*)^{\otimes (n+1)}$.
Alternatively, the face and degeneracy maps $A^{\otimes n} \rightarrow A^{\otimes (n \pm 1)}$ of a left $H$-module algebra $A$ are left $H$-module arrows; in particular, $A^{\otimes n} \| A^{\otimes (n+1)}$. 

Finally note that $V$ is a right $H$-module (and right $R$-module) coalgebra, and the mapping $k \rightarrow V$ given by $\lambda \mapsto
\lambda \overline{1}$ splits $\eps_V$ as an $R$-epi, since $\overline{r} = \eps(r)\overline{1}$ for each $r \in R$. 
\end{proof}
If $H$ is a right semisimple extension of $R$ (i.e. all $H$-modules are relatively $R$-projective),
then $k_H \| V_H$, since $k_H$ is relatively $R$-projective and the image of the $R$-split epi $\eps_V: V_H \rightarrow k_H$. 
The next proposition determines a projective cover and an injective hull, both within $H$, for the cyclic $H$-module $V = H/R^+H$: the standard proof (cf.\ \cite{L}) is  omitted.  
\begin{prop}
\label{prop-pc}
The projective cover of $V$ is $eH \stackrel{\pi}{\longrightarrow} V$ for some idempotent $e \in H$ with $\eps(e) = 1$ and $\ker \pi := C \subseteq e\textrm{Rad}\, H$;  there is equality of subsets if and only if $V_H$ is semisimple. There is an idempotent $f \in H$ such that the $H$-module $fH$ is the injective hull of $V$ and contains $t_RH$.
\end{prop}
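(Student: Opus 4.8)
The plan is to treat both claims with the general machinery of projective covers and injective hulls over the finite-dimensional (hence semiperfect, self-injective) algebra $H$, feeding in two facts special to our setting: that $V = \overline{1}H$ is cyclic with the trivial module $k_\eps$ among its simple quotients, and that $V \cong t_R H$ as right $H$-modules by Lemma~\ref{lemma-right}. Write $J := \mathrm{Rad}\, H$. For the projective cover I would first invoke that $H$ is semiperfect, so a projective cover $\pi \colon P \to V$ exists with $C := \ker\pi$ superfluous, i.e.\ $C \subseteq \mathrm{Rad}\, P$. Next I would show $P$ is cyclic: since $C$ is superfluous, $\pi$ induces an isomorphism of tops $P/\mathrm{Rad}\, P \cong V/VJ$, and $V$ being cyclic forces $V/VJ$, hence $P/\mathrm{Rad}\, P$, to be cyclic; lifting a generator by Nakayama's lemma gives $P$ cyclic. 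A cyclic projective $H$-module is a direct summand of $H_H$, so $P = eH$ for an idempotent $e$, and then $\mathrm{Rad}(eH) = eJ = e\,\mathrm{Rad}\, H$, which yields $C \subseteq e\,\mathrm{Rad}\, H$.

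To pin down $\eps(e) = 1$ I would locate $k_\eps$ inside the top of $V$. The counit $\eps_V \colon V \to k_\eps$ exhibits the one-dimensional trivial module $k_\eps = H/H^+$ (with $H^+ = \ker \eps$) as a simple quotient of $V$; any such simple quotient factors through $V/VJ$, so $k_\eps$ is a direct summand of the semisimple module $V/VJ \cong eH/eJ$. Hence there is a nonzero $H$-module map $\phi\colon eH \to k_\eps$. Evaluating on the idempotent gives $\phi(e) = \phi(e\cdot e) = \phi(e)\,\eps(e)$, and since $\phi \neq 0$ forces $\phi(e) \neq 0$, we conclude $\eps(e) = 1$.

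For the equivalence with semisimplicity I would compute the radical of $V \cong eH/C$ directly. Because $C \subseteq eJ$, one has $\mathrm{Rad}(eH/C) = (eJ + C)/C = eJ/C$, which vanishes exactly when $C = eJ = e\,\mathrm{Rad}\, H$; as $V_H$ is semisimple iff $\mathrm{Rad}\, V = 0$, this gives equality of the subsets $C$ and $e\,\mathrm{Rad}\, H$ precisely when $V$ is semisimple.

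For the injective hull I would use that $H$, being a finite-dimensional Hopf algebra, is a Frobenius algebra, so $H_H$ is injective (self-injective) and injective summands of $H_H$ are of the form $fH$ for idempotents $f$. By Lemma~\ref{lemma-right}, $V \cong t_R H$, and $t_R H$ is a genuine right ideal, i.e.\ a submodule of the injective module $H_H$. Its injective hull $E(t_R H)$ then embeds in $H$ as a submodule containing $t_R H$ and, being injective, is a direct summand of $H_H$; thus $E(t_R H) = fH$ with $t_R H \subseteq fH \subseteq H$, and $fH$ is the injective hull of $V$ through the identification $V \cong t_R H$. The main obstacle is not technical depth but getting these two Hopf-theoretic inputs exactly right: isolating $k_\eps$ in the top of $V$ to force $\eps(e) = 1$, and realizing $E(V)$ concretely as a direct summand $fH$ of $H$ that literally contains $t_R H$, which rests on the self-injectivity (Frobenius property) of $H$ together with $V \cong t_R H$; the remainder is formal module theory over a semiperfect, self-injective algebra.
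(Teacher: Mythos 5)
Your argument is correct and is exactly the ``standard proof (cf.\ \cite{L})'' that the paper deliberately omits: semiperfectness of $H$ gives the cover $eH$ with superfluous kernel, the counit $\eps_V\colon V\to k_\eps$ isolated in the top forces $\eps(e)=1$, the radical computation $\mathrm{Rad}(eH/C)=eJ/C$ settles the semisimplicity equivalence, and self-injectivity of the Frobenius algebra $H$ together with $V\cong t_RH$ from Lemma~\ref{lemma-right} realizes the injective hull as a summand $fH\supseteq t_RH$. Nothing to add.
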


\subsection{An alternative cochain complex isomorphic to the relative Hochschild complex of $(H,R)$
with coefficients} The isomorphisms in Eqs.~(\ref{eq: fundament}) and~(\ref{eq: forward}) lead naturally to an alternative cochain complex for computing relative Hochschild cohomology of a Hopf subalgebra pair $(H, R)$ with coefficients in an
$H$-$H$-bimodule $M$ \cite{Hoch}: the cochains take values in $M$ on tensor powers of the generalized
permutation module $V$, are right $H$-linear with respect to the right adjoint action on $M$
and the diagonal action on $V^{\otimes n}$, and the differentials are alternating sums of evaluation
of the counit of $V$ on successive tensorands.  The reasoning is as follows (with proofs and computations left as exercises; cf.\ \cite{Hoch}). 

 The bar resolution of $(H,R)$ is given by 
\begin{equation}
\label{eq: bar}
\cdots \stackrel{d_n}{\longrightarrow} H^{\otimes_R n} \stackrel{d_{n-1}}{\longrightarrow} H^{\otimes_R (n-1)} \longrightarrow \cdots H \otimes_R H \stackrel{d_1}{\longrightarrow} H
\end{equation}
where ($n \geq 2$, $h_1, \cdots,h_n \in H$)
$$d_{n-1}(h_1 \otimes_R \cdots \otimes h_n) = \sum_{i=1}^{n-1} (-1)^{i+1} h_1 \otimes \cdots \otimes h_i h_{i+1} \otimes \cdots \otimes h_n.$$

The bar resolution above is isomorphic via Eqs.~(\ref{eq: fundament}) and~(\ref{eq: forward}) to
\begin{equation}
\label{eq: changedcomplex}
\cdots \stackrel{{d'}_{n+1}}{\rightarrow} H_{\cdot} \otimes V_{\cdot}^{\otimes n} \stackrel{{d'}_n}{\rightarrow} H_{\cdot} \otimes V_{\cdot}^{\otimes (n-1)} \rightarrow
\cdots H_{\cdot} \otimes V_{\cdot} \stackrel{{d'}_1}{\rightarrow} H
\end{equation}
where for all $h \in H$, $v_1,\ldots,v_n \in V$, 
\begin{equation}
\label{eq: dprime}
{d'}_n (h \otimes v_1 \otimes \cdots \otimes v_n) = \sum_{i=1}^n (-1)^{i+1} \eps(v_i)
h \otimes v_1 \otimes \cdots \otimes \hat{v_i} \otimes  \cdots \otimes v_n
\end{equation}
where $\hat{v_i}$ denotes $v_i$ deleted.  

 Given a bimodule ${}_HM_H$, let $C^n(H,R;M)$ be $\Hom_{R-R}(H^{\otimes_R n}, M) \cong \Hom_{H-H}(H^{\otimes_R (n+2)}, M)$ and $\partial_n = \Hom_{H-H}(d_{n+2},M)$ with relative
Hochschild cochain complex,
\begin{equation}
\label{eq: complex}
M^R \stackrel{\partial_0}{\longrightarrow} \cdots \stackrel{\partial_{n-1}}{\longrightarrow}
\Hom_{R-R}(H^{\otimes_R n}, M) \stackrel{\partial_n}{\longrightarrow} \Hom_{R-R}(H^{\otimes_R (n+1)}, M) \rightarrow \cdots
\end{equation}
where $\partial_0(m)(h) = hm - mh$ for $m \in M^R, h \in H$ and ($f \in C^n(H,R;M)$)
\begin{eqnarray*}
(\partial_n f)(h_1, \cdots, h_{n+1}) &=&  h_1 f(h_2,\ldots,h_{n+1}) + (-1)^{n+1}f(h_1,\ldots,h_n)h_{n+1}  \\
& & + \sum_{i=1}^n (-1)^{i} f(h_1,\ldots,h_i h_{i+1}, \cdots, h_{n+1}).
\end{eqnarray*}

The relative Hochschild cohomology groups $\mathrm{H}^*(H,R;M)$ are the cohomology groups of the cochain
complex~(\ref{eq: complex}), which via the isomorphic bar resolution~(\ref{eq: changedcomplex}) and differential~(\ref{eq: dprime})
is isomorphic as cochain complexes  to $\hat{C}^n(H,R;M) :=$  $ \Hom_{-H}(V^{\otimes n}, M_{ad})$ 
($\cong \Hom_{H-H}(H_{\cdot} \otimes V^{\otimes n}, M)$ via $g \mapsto \tilde{g}$ where
$$\tilde{g}(h,v_1,\ldots,v_n) := hg(v_1,\ldots,v_n)),$$ 
and $f \in \hat{C}^n(V^{\otimes n}, M_{ad})$
 if for every $v_1,\ldots,v_n \in V$ and $h \in H$,
\begin{equation}
f(v_1 h\1, \ldots,v_n h\n) = S(h\1)f(v_1,\ldots,v_n) h\2,
\end{equation}
and the cochain differential, simplified via $g \mapsto \tilde{g}$ from
$\Hom_{H-H}({d'}_*, M)$, is given by
\begin{equation}
\label{eq: differential}
\hat{\partial}_n(f)(v_1,\ldots,v_{n+1}) = \sum_{i=1}^{n+1} (-1)^{i+1}\eps(v_i)
f(v_1,\ldots,v_{i-1},v_{i+1},\ldots,v_{n+1}).
\end{equation}
For example, $\Hom_{-H}(V,M) \cong M^R$ via $g \mapsto g(\overline{1})$, since for all $r \in R$,  $g(\overline{1})r = r\1 S(r\2) g(\overline{1}) r\3 = r\1 g(\overline{r\2}) = r g(\overline{1})$. 

If $H$ is a group algebra and $R$ is the trivial one-dimensional subalgebra, the bar resolutions
above are the homogenous and nonhomogeous bar resolutions of group cohomology theory. If
$G$ is a group, Eqs.~(\ref{eq: forward}) and~(\ref{eq: inverse}) become the chain complex  isomorphism
$$[g_1| \cdots|g_n] \longmapsto [g_1\cdots g_n| g_2\cdots g_n | \cdots | g_{n-1}g_n| g_n ]$$
with inverse $[h_1 | \cdots | h_n ] \mapsto [h_1 h_2^{-1} | h_2 h_3^{-1} | \cdots | h_n ]$
for all $g_i, h_i \in G$.

\section{A module's depth in a tensor category}
\label{three}
  
We define the depth of an object $W$ in a finite tensor category $(\mathcal{C}, \otimes, \One)$ that is equivalent as tensor categories to a category $\M_H$ of finite-dimensional right modules
over a finite-dimensional Hopf algebra $H$ over an arbitrary field $k$.  
Let $W^{\otimes n} = W \otimes \cdots \otimes W$ ($n$ times) where
$W^{\otimes 0}$ denotes the unit module $\One = k_{\eps}$. 
(Following \cite[2.1]{EO}, a tensor category is an abelian category with a tensor functor that is distributive over direct sums, satisfying rigidity conditions for its left and right duals, and has a simple unit module $\One$.  A tensor category $\mathcal{D}$ is a \textit{finite tensor category} over an algebraically closed field $\tilde{k}$ if every object has finite length, $\mathcal{D}$ has finitely many simple objects, and each simple $X$ has projective cover $P(X)$.  This is equivalent to $\mathcal{D} \cong \M_A$ for some finite-dimensional algebra $A$
over $\tilde{k}$. If $\mathcal{C}$ has a fiber functor $F: \mathcal{D} \rightarrow \tilde{k} \mathrm{-Vect}$,  
a Tannakian reconstruction shows that $\mathcal{D} \cong \M_A$ where $A$ is a finite-dimensional Hopf algebra. The hypothesis that the ground field be algebraically closed is important in \cite{EO}, but not necessary for what we do below.)   
Proposition~\ref{prop-tensorequation} suggests that defining a depth of $W$ in $\M_R$ is useful for obtaining an upper bound on depth
$d(R,H)$ of a Hopf subalgebra $R \subseteq H$.  We make use of the truncated tensor algebra of $W$,  $T_n(W) := W \oplus (W \otimes W) \oplus \cdots \oplus W^{\otimes n}$, since   $T_n(W)$ clearly divides $T_{n+1}(W)$ in the abelian category $\mathcal{C}$ for each $n \geq 1$; for $n =0$ define
$T_0(W) = \One$.  (There is no harm done including 
$\One$ in the definition of $T_n(W)$ if dealing only with  fusion categories and semisimple Hopf algebras. See the discussion about semisimple extensions after Proposition~\ref{prop-sepext}. ) 

\begin{definition}
\label{def-moduledepth}
\begin{rm}
Say that the nonzero object $W$  has depth $n \geq 0$ in $\mathcal{C}$ if $T_{n+1}(W)$, equivalently $W^{\otimes (n+1)}$, divides a multiple of $T_n(W)$; briefly, $W^{\otimes (n+1)} \| qT_n(W)$. Equivalently  
$W$ has depth $n$ iff 
$T_n(W) \sim T_{n+1}(W)$ in $\mathcal{C}$.  Note 
that  if $W$  has depth $n$ in $\mathcal{C}$, then it has depth $n+1$, which follows
from tensoring $T_{n+1}(W) \| qT_n(W)$ by $W$ and adding $W$'s to both sides
to obtain $T_{n+2}(W) \| qT_{n+1}(W)$. Denote the minimum depth of $W$, if it exists, by $d(W, \mathcal{C})$.   Write $d(W, \mathcal{C}) = \infty$ if $W$ has no finite depth. 
\end{rm}
\end{definition}

Note that for a coalgebra or algebra $W$ in the tensor category $\mathcal{C}$ (i.e., $W$ is a right $H$-module algebra or coalgebra) we may simplify the definition of depth $n$ object to the equivalent conditions $W^{\otimes (n+1)} \| qW^{\otimes n}$, or
$W^{\otimes n} \sim W^{\otimes (n+1)}$, by making use of Proposition~\ref{prop-sepext}.  

\begin{example}
\begin{rm}
For a module $W$ over a group algebra, the (finite depth) condition $T_{n+1}(W) \| qT_n(W)$ for some $q,n \in \N$ 
is the condition for $W$ being an \textit{algebraic module}. 
Let $\H \subseteq \G$ be a subgroup of a finite group, and $k = \C$; the
character of the permutation module $V$ is the induced principal character $\eta := {1_{\H}}^{\G}$ as noted above in Example~\ref{example-subgroups}.  This character is faithful if $\H$
is corefree in $\G$, i.e., there are conjugates of $\H$ with trivial intersection \cite{I}.  In this case, the Brauer-Burnside theorem \cite[p.\ 49]{I}  asserts that each irreducible character $\psi$ of $\G$ is a constituent of a power $\eta^n$ where $0 \leq n < m \leq |\G:\H|$ and $m$ is the number of distinct values taken by $\eta(g)$ as $g \in \G$.  Putting Prop.~\ref{prop-sepext} together with Def.~\ref{def-moduledepth}, one obtains that $\eta^{m-1}$ contains each irreducible character as a constituent, so the depth $d(V, \M_{\C\! [\G]}) \leq m-1$.  (Looking ahead and applying Theorem~\ref{theorem-main}, we see that as a consequence the depth of the corefree subgroup is $d_{\C}(\H, \G) \leq 2m \leq 2|\G : \H |$. ) 
\end{rm}
\end{example}
Next are a series of lemmas  for computing depth of modules in finite tensor categories;
 we switch back to our point-of-view in the module category
$\M_H$ of a finite-dimensional Hopf algebra $H$.   The following   lemma may be applied to either inclusions of Hopf subalgebras or epis to Hopf algebra quotients. 

\begin{lemma}
\label{lemma-RES}
Given a Hopf algebra homomorphism $f: R \rightarrow H$,  if depth $d(W, \M_H) < \infty$, then depth of its restriction along $f$ satisfies $$d(W_f, \M_R) \leq d(W, \M_H).$$  
\end{lemma}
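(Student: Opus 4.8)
The plan is to observe that restriction along $f$, the functor $f^*: \M_H \to \M_R$ sending $W \mapsto W_f$, is a $k$-linear additive \emph{tensor} functor, and that the notion of depth in Definition~\ref{def-moduledepth} is phrased entirely in terms of the tensor product, finite direct sums and the divisibility relation $\|$ --- all of which $f^*$ respects. The inequality will then follow by simply transporting the witnessing divisibility from $\M_H$ down to $\M_R$.

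First I would verify the monoidal compatibility $(U \otimes W)_f \cong U_f \otimes W_f$ for $H$-modules $U,W$. With the right-module convention of the paper, the tensor product in $\M_H$ carries the action $(u \otimes w)h = u h\1 \otimes w h\2$; restricting along $f$ gives $(u \otimes w)\cdot r = u f(r)\1 \otimes w f(r)\2$. Because $f$ is a Hopf algebra homomorphism it intertwines coproducts, $\cop_H \circ f = (f \otimes f)\circ \cop_R$, so $f(r)\1 \otimes f(r)\2 = f(r\1) \otimes f(r\2)$, and the action becomes $(u\cdot r\1)\otimes(w\cdot r\2)$, which is precisely the $R$-module structure on $U_f \otimes W_f$. (Likewise $\eps_H \circ f = \eps_R$ shows $f^*$ carries the unit $k_{\eps}$ of $\M_H$ to that of $\M_R$, though this is not needed below since $T_n$ omits $\One$.) Iterating, $f^*(W^{\otimes n}) \cong (W_f)^{\otimes n}$ for every $n$, and since $f^*$ is additive it commutes with finite direct sums, whence $f^*(T_n(W)) \cong T_n(W_f)$.

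Next I would record that $f^*$, being additive, preserves divisibility: if $M \| qN$ in $\M_H$, i.e.\ $M \oplus * \cong qN$, then applying $f^*$ yields $M_f \oplus *_f \cong q N_f$, that is $M_f \| q N_f$ in $\M_R$. To finish, suppose $d(W, \M_H) = n$. By Definition~\ref{def-moduledepth} there is a $q$ with $W^{\otimes (n+1)} \| q\, T_n(W)$ in $\M_H$. Applying $f^*$ together with the two facts above gives $(W_f)^{\otimes (n+1)} \| q\, T_n(W_f)$ in $\M_R$, so $W_f$ has depth $n$ there and hence $d(W_f, \M_R) \leq n = d(W, \M_H)$.

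The only genuine content --- more a matter of bookkeeping than of difficulty --- is the monoidal compatibility of restriction, which is exactly where the hypothesis that $f$ be a \emph{Hopf algebra} map (and not merely an algebra map) is used, through the coproduct-intertwining identity. Once $f^*$ is known to be an additive tensor functor, the statement is formal, since depth is expressed purely in the language ($\otimes$, $\oplus$, $\|$) that such a functor preserves.
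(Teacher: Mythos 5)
Your proposal is correct and follows essentially the same route as the paper: both arguments rest on the observation that restriction along a Hopf algebra map is an additive tensor functor (via the coalgebra-morphism property of $f$), so it carries the witnessing divisibility $W^{\otimes(n+1)} \| q\,T_n(W)$ down to $\M_R$. Your write-up merely spells out the bookkeeping that the paper compresses into ``follows readily.''
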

\begin{proof}
We first note that the functor $U \mapsto U_f$ is a tensor functor from $\M_H \rightarrow \M_R$, since $(U \otimes W)_f = U_f \otimes W_f$ as $R$-modules follows from the coalgebra morphism property of $f$, and $\mathbf{1}_f = \mathrm{1}_R$ from $\eps_H \circ f = \eps_R$. If $W^{\otimes (n+1)} \| T_n(W)$ for some integer $n \geq 0$, then ${W_f}^{\otimes (n+1)} \| T_n(W_f)$ follows readily.    
\end{proof}
 
  For example, if $R$ is an ad-stable Hopf subalgebra of $H$,  then $V =  H/R^+H = \overline{H}$, the quotient Hopf algebra, which is a Hopf module: these have depth $d(V, \M_{\overline{H}}) \leq 1$ as noted below in Corollary~\ref{cor-one}. Then by Lemma~\ref{lemma-RES} applied to $H \rightarrow \overline{H}$,  $d(V,\M_H) \leq 1$, then applied to $R \into H$,  $d(V, \M_R) \leq 1$. On the other hand, $R^+H = HR^+$, so that $V =  H/HR^+$ is a trivial right $R$-module with depth $0$.
\begin{lemma}
Given a module $W \in \M_H$, let $\mathcal{P}_n(W)$ denote the set of isomorphism classes of indecomposable direct summands of $T_n(W)$.  
\begin{itemize}
\item Then $\mathcal{P}_n(W) \subseteq \mathcal{P}_{n+1}(W)$, with equality iff $W$ has depth $n$ in $\M_H$;
\item If $U \| W$ in $\M_H$, then $d(W,\M_H) \leq n$ implies $d(U, \M_H) \leq n + |\mathcal{P}_n(W)|$;
\item If $H$ is a quasi-triangular Hopf algebra and $V$, $W$ are right $H$-module coalgebras of finite depth in $\M_H$, then  $V \otimes W$ is a right $H$-module coalgebra of depth, 
\begin{equation}
\label{eq: braided}
d(V \otimes W, \M_H) \leq \max \{ d(V, \M_H), d(W, \M_H) \}
\end{equation}
\end{itemize}   
\end{lemma}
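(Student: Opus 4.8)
The plan is to dispatch the three bullets in order: the first two reduce to the Krull--Schmidt theorem recalled in the introduction, while the third is where the quasi-triangular (braided) hypothesis does the real work.

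For the first bullet I would observe that $T_{n+1}(W) = T_n(W) \oplus W^{\otimes(n+1)}$, so $T_n(W)$ is literally a direct summand of $T_{n+1}(W)$, and hence every indecomposable constituent of $T_n(W)$ occurs in $T_{n+1}(W)$; this gives the inclusion $\mathcal{P}_n(W) \subseteq \mathcal{P}_{n+1}(W)$. For the equality clause I would invoke Krull--Schmidt in the form stated at the start of the paper: two finite-dimensional modules are similar iff they have the same constituent indecomposables. Thus $\mathcal{P}_n(W) = \mathcal{P}_{n+1}(W)$ holds iff $T_n(W) \sim T_{n+1}(W)$, which by Definition~\ref{def-moduledepth} is exactly the statement that $W$ has depth $n$ in $\M_H$.

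For the second bullet, the hypothesis $U \| W$ yields $U^{\otimes j} \| q_j\, W^{\otimes j}$ for suitable $q_j$ and every $j$ (tensoring preserves division by a multiple), whence $\mathcal{P}_m(U) \subseteq \mathcal{P}_m(W)$ for all $m$. Since $d(W,\M_H) \leq n$, the first bullet gives $\mathcal{P}_m(W) = \mathcal{P}_n(W)$ for every $m \geq n$, so the ascending chain $\mathcal{P}_n(U) \subseteq \mathcal{P}_{n+1}(U) \subseteq \cdots$ is trapped inside the fixed finite set $\mathcal{P}_n(W)$ of cardinality $|\mathcal{P}_n(W)|$. An ascending chain of subsets of a set of that size can increase strictly at most $|\mathcal{P}_n(W)|$ times, so it stabilizes at some index $\leq n + |\mathcal{P}_n(W)|$; by the first bullet, stabilization of $\mathcal{P}_\bullet(U)$ at an index is precisely finite depth of $U$ there, so $d(U,\M_H) \leq n + |\mathcal{P}_n(W)|$. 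This mirrors the bounded-ascending-chain arguments of Propositions~\ref{prop-separability} and~\ref{prop-split}.

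For the third bullet I would first record that quasi-triangularity makes $\M_H$ a braided tensor category, with natural braiding isomorphisms $c_{U,W}\colon U \otimes W \to W \otimes U$. The tensor product $V \otimes W$ of two module coalgebras then inherits a module-coalgebra structure, with comultiplication $(\id_V \otimes c_{V,W} \otimes \id_W)(\cop_V \otimes \cop_W)$ and counit $\eps_V \otimes \eps_W$, coassociativity being exactly the content of the hexagon axioms. The key step is the sorting isomorphism of right $H$-modules $(V \otimes W)^{\otimes m} \cong V^{\otimes m} \otimes W^{\otimes m}$, obtained by iterating $c$ to move all $V$-tensorands left and all $W$-tensorands right; this is natural and valid in any braided category. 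Writing $N = \max\{ d(V,\M_H), d(W,\M_H)\}$, both $V$ and $W$ have depth $\leq N$, and since they are module coalgebras Proposition~\ref{prop-sepext} lets us read this as $V^{\otimes(N+1)} \| q_V\, V^{\otimes N}$ and $W^{\otimes(N+1)} \| q_W\, W^{\otimes N}$. Tensoring these and applying the sorting isomorphism twice gives
\[
(V \otimes W)^{\otimes(N+1)} \cong V^{\otimes(N+1)} \otimes W^{\otimes(N+1)} \| q_V q_W\, V^{\otimes N} \otimes W^{\otimes N} \cong q_V q_W\, (V \otimes W)^{\otimes N}.
\]
As $V \otimes W$ is itself a module coalgebra, Proposition~\ref{prop-sepext} converts this divisibility into the depth bound $d(V \otimes W, \M_H) \leq N$, as claimed. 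The step I expect to be the main obstacle is precisely the verification that $V \otimes W$ is a module coalgebra together with the sorting isomorphism $(V\otimes W)^{\otimes m} \cong V^{\otimes m}\otimes W^{\otimes m}$: both rest entirely on the hexagon identities, so the quasi-triangular hypothesis is consumed exactly here, and the argument would break down in a merely monoidal, non-braided category.
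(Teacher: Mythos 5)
Your proposal is correct and follows essentially the same route as the paper: Krull--Schmidt applied to $T_n(W)\oplus W^{\otimes(n+1)}\cong T_{n+1}(W)$ for the first bullet, trapping the ascending chain $\mathcal{P}_{n+s}(U)$ inside the stabilized finite set $\mathcal{P}_n(W)$ for the second, and for the third the braided tensor-product coalgebra structure (your $(\id_V\otimes c_{V,W}\otimes\id_W)(\cop_V\otimes\cop_W)$ is exactly the paper's $\cop(v\otimes w)=v\1\otimes w\1R^1\otimes v\2R^2\otimes w\2$) together with the sorting isomorphism $(V\otimes W)^{\otimes m}\cong V^{\otimes m}\otimes W^{\otimes m}$ and tensoring of the two divisibility relations. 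The only cosmetic difference is that you cite the hexagon axioms abstractly where the paper verifies coassociativity directly from the $R$-matrix identities $(\cop\otimes\id)(R)=R^{13}R^{23}$ and $(\id\otimes\cop)(R)=R^{13}R^{12}$, which amounts to the same thing.
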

\begin{proof}
The inclusion follows from Krull-Schmidt applied to $T_n(W) \oplus W^{\otimes (n+1)} \cong T_{n+1}(W)$.
The opposite inclusion $\mathcal{P}_n(W) \supseteq \mathcal{P}_{n+1}(W)$ follows in the same way from $T_{n+1}(W) \oplus * \cong qT_n(W)$ (indeed, equivalently).

If $U \| W$, then using distributive law, one obtains $U^{\otimes n} \| W^{\otimes n}$ for each $n \in \N$.   It follows that $\mathcal{P}_n(U) \subseteq \mathcal{P}_n(W) = \mathcal{P}_{n+k}(W)$
for all $k \geq 0$.  Then $\mathcal{P}_{n+s}(U) = \mathcal{P}_{n+s+1}(U)$ for $s = |\mathcal{P}_n(W)|$.   

Let $R = R^1 \otimes R^2$ be an R-matrix for $H$, so that $R$ is invertible in \newline $H \otimes H$ and $R^1h\1 \otimes R^2h\2 = h\2 R^1 \otimes h\1 R^2$ for all $h \in H$.  It is well-known that $\M_H$ is braided, so that 
$V \otimes W \stackrel{\cong}{\longrightarrow} W \otimes V$ via $v \otimes w \mapsto
 w R^1 \otimes v R^2$ as arrows in $\M_H$; cf. \cite[10.1.2]{M}.   Now it is not difficult to check that, if $V$ and $W$ are right $H$-module coalgebras, then $V \otimes W$ is itself a module coalgebra
with coproduct using the braiding as follows, $\cop(v \otimes w) = v\1 \otimes w\1 R^1 \otimes v\2 R^2 \otimes w\2$;  since $(\cop_H \otimes \id_H)(R) = R^{13}R^{23}$, $(\id_H \otimes \cop_H)(R) = R^{13}R^{12}$, coassociativity
follows, and since $\eps_H(R^1)R^2 = 1 = R^1 \eps_H(R^2)$, the counit defined by 
$\eps(v \otimes w) = \eps_V(v)\eps_W(w)$ satisfies the counit equations.

By braided commutativity then, $(V \otimes W)^{\otimes m} \cong V^{\otimes m} \otimes W^{\otimes m}$ in $\M_H$ for each $m \in \N$.  If $V$ and $W$ are 
finite depth $H$-module coalgebras, then there 
are $q_1, q_2 \in \N$ and complementary $H$-modules $U_1, U_2$ such that $V^{\otimes (n+1) } \oplus U_1 \cong q_1 V^{\otimes n}$
and $W^{\otimes (n+1)} \oplus U_2 \cong q_2 W^{\otimes n}$.  Tensoring
and deleting summands with some $U_i$ as a factor yields
$$(V \otimes W)^{\otimes (n+1)} \cong V^{\otimes (n+1)} \otimes W^{\otimes (n+1)} \| q_1q_2 V^{\otimes n} \otimes W^{\o n} \cong q_1q_2 (V \otimes W)^{\o n}.$$
The inequality~(\ref{eq: braided}) follows from this. 
\end{proof}

Suppose $s = | \{$ isomorphism classes of simples in $\M_H \} | = | \{$ isomorphism classes of projective indecomposables in $\M_H \} |  = | \{$ isomorphism classes of injective indecomposables in $\M_H \} |$ (via bijection $X \mapsto P(X)$) \cite{DK}.  

\begin{prop}
If $W$ is a projective module in $\M_H$, then its depth satisfies $d(W, \M_H) \leq s$.  
\end{prop}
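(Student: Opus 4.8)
The plan is to exploit the fact that projectivity is a tensor-ideal property in a rigid tensor category, so that every tensor power $W^{\otimes n}$ remains projective, and then to run the ascending-chain argument of the preceding lemma inside the finite set of indecomposable projectives.

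First I would record that if $W$ is projective in $\M_H$, then $W \otimes X$ is projective for every $X \in \M_H$. This is where the rigidity of the tensor category enters: for a right dual $X^*$, the functor $-\otimes X$ has right adjoint $-\otimes X^*$, so $\Hom(W \otimes X, -) \cong \Hom(W, -\otimes X^*)$. The functor $-\otimes X^*$ is exact (in a tensor category the tensor product is biexact, since $-\otimes X^*$ has adjoints on both sides), and $\Hom(W, -)$ is exact because $W$ is projective; composing, $\Hom(W \otimes X, -)$ is exact, i.e.\ $W \otimes X$ is projective. Taking $X = W$ and iterating, every $W^{\otimes n}$ with $n \geq 1$ is projective, and hence so is each truncated tensor algebra $T_n(W) = W \oplus \cdots \oplus W^{\otimes n}$.

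Next I would pass to indecomposable constituents. Since $T_n(W)$ is projective, every member of the set $\mathcal{P}_n(W)$ of isoclasses of indecomposable summands of $T_n(W)$ is an indecomposable projective object of $\M_H$, and by the standing hypothesis there are exactly $s$ such isoclasses. By the first item of the previous lemma these sets form an ascending chain $\mathcal{P}_1(W) \subseteq \mathcal{P}_2(W) \subseteq \cdots$ contained in this $s$-element set, with each $\mathcal{P}_n(W)$ nonempty because $W \neq 0$.

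Finally I would close with a counting argument. If the depth exceeded $s$, then $\mathcal{P}_n(W) \subsetneq \mathcal{P}_{n+1}(W)$ for every $n \leq s$, yielding a chain of $s$ strict inclusions $\mathcal{P}_1(W) \subsetneq \cdots \subsetneq \mathcal{P}_{s+1}(W)$; since $|\mathcal{P}_1(W)| \geq 1$, this forces $|\mathcal{P}_{s+1}(W)| \geq s+1$, contradicting the bound $s$. Hence some $n \leq s$ satisfies $\mathcal{P}_n(W) = \mathcal{P}_{n+1}(W)$, and the first item of the lemma gives $d(W, \M_H) \leq s$. The single genuine input—and the only place rigidity is really needed—is the projectivity of the tensor powers; everything after it is the same finite-chain bookkeeping already used in the descent propositions above.
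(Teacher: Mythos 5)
Your proof is correct and takes essentially the same route as the paper: both rest on the projectivity of the tensor powers $W^{\otimes n}$ (the paper cites \cite[Prop.\ 2.1]{EO} and its Hopf-module argument where you give the rigidity/adjunction argument, but these establish the same fact) followed by the ascending-chain count of $\mathcal{P}_n(W)$ inside the $s$-element set of projective indecomposables. Your explicit contradiction argument is a slightly more careful rendering of the paper's "bounded ascending chain must stabilize by step $s$" step, but the substance is identical.
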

\begin{proof}
The nonzero tensor powers of $W$ are projective modules in $\M_H$  (\cite[Prop.\ 2.1]{EO}, also noted after Proposition~\ref{prop-withconsequence} for $k$ not
necessarily algebraically closed). 
Therefore the indecomposable summands of $W^{\otimes n}$ are projective indecomposables of which there are $s$ different modules.    Since  $\mathcal{P}_n(W) \subseteq \mathcal{P}_{n+1}(W)$ for all $n > 0$, and these are bounded above by a finite set of cardinality $s$, it follows that 
$\mathcal{P}_s(W) = \mathcal{P}_{s+1}(W)$, so $W$ has depth $s$ in $\M_H$.  
\end{proof}

A Hopf algebra $H$ has the Chevalley property, i.e., the tensor product of two simple $H$-modules is semisimple iff the radical ideal of $H$ is a Hopf ideal \cite{Lo}.  If this is not the case, we must distinguish a proper subset, the maximal nilpotent Hopf ideal $J_{\omega}(H) \subset \mathrm{rad}\, H$ \cite{CH}. 
 As a note of caution for the next proposition,  consider the special case where $k$ has characteristic $p$ and $H$ is a group algebra $k[G]$.   It is noted in \cite{CH} that $J_{\omega} (H) = R^+H$ for the Hopf subalgebra $R = k[O_p(G)]$, where $O_p(G)$ is the largest normal $p$-subgroup of $G$, based on the result in \cite{PQ} that  (nilpotent) Hopf ideals in $H$ are of the form $Hk[N]^+$ for normal ($p$-) subgroups $N \leq G$.
Thus for the Hopf subalgebra pair $R = k[O_p(G)] \subseteq k[G] = H$, the permutation module $V \cong H/J_{\omega}(H)$ is isomorphic to the Hopf algebra $k[G/O_p(G)]$, which is a semisimple $H$-module iff $O_p(G)$ is the Sylow $p$-subgroup iff $k[G]$ has the Chevalley property \cite[Theorem 4.4]{CH}.  

\begin{prop}
\label{prop-semisimple}
 Suppose the radical ideal $J$ of a Hopf algebra $H$ is a Hopf ideal (e.g., $H^*$ is a pointed Hopf algebra \cite[5.2.8]{M}).  If $W$ is a semisimple $H$-module, then $d(W, \M_H) \leq s$.
\end{prop}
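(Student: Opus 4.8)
The plan is to follow the template of the preceding proposition on projective modules, substituting ``semisimple'' for ``projective'' and letting the Chevalley property play the role that projectivity of tensor powers played there.

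First I would invoke the Chevalley property. Since $J = \mathrm{rad}\, H$ is a Hopf ideal, $H$ has the Chevalley property, so the tensor product of any two simple $H$-modules is semisimple. Because the tensor bifunctor on $\M_H$ is additive and distributes over direct sums, it follows that the tensor product of any two semisimple modules is again semisimple: write each factor as a direct sum of simples and expand over the distributive law. An induction on $n$ then shows that every nonzero tensor power $W^{\otimes n}$ of the semisimple module $W$ is semisimple, and hence so is each truncated tensor algebra $T_n(W) = W \oplus (W \otimes W) \oplus \cdots \oplus W^{\otimes n}$.

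Next I would observe that the indecomposable direct summands of a semisimple module are precisely the simple modules occurring in it. Thus the set $\mathcal{P}_n(W)$ of isoclasses of indecomposable summands of $T_n(W)$ consists entirely of simple modules, of which there are exactly $s$ isoclasses in $\M_H$; in particular $|\mathcal{P}_n(W)| \leq s$ for every $n \geq 1$. Finally I would apply the ascending-chain lemma of this section: $\mathcal{P}_n(W) \subseteq \mathcal{P}_{n+1}(W)$ for all $n \geq 1$, with equality exactly when $W$ has depth $n$ in $\M_H$. Since this is an ascending chain of subsets of a set of cardinality $s$ and $\mathcal{P}_1(W)$ is already nonempty, each strict inclusion raises the cardinality by at least one, so the chain must stabilize by index $s$; that is, $\mathcal{P}_n(W) = \mathcal{P}_{n+1}(W)$ for some $n \leq s$. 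Therefore $d(W, \M_H) \leq s$.

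The crux of the argument --- the one step that genuinely uses the hypothesis --- is the closure of semisimplicity under tensor products, which is exactly the Chevalley property furnished by $J$ being a Hopf ideal; without it the tensor powers of a semisimple module need not be semisimple and the indecomposable constituents could escape the finite set of simples. The remaining stabilization argument is formally identical to the projective case and presents no real obstacle.
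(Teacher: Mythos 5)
Your proposal is correct and follows essentially the same route as the paper: both arguments reduce to the fact that the Hopf-ideal hypothesis on $J$ makes semisimple modules closed under tensor product (the paper cites \cite[cor.\ 8]{PQ} for this where you derive it from the Chevalley property and distributivity, but these are the same fact), after which the indecomposable constituents of the tensor powers lie among the $s$ simples and the ascending chain $\mathcal{P}_n(W) \subseteq \mathcal{P}_{n+1}(W)$ stabilizes by index $s$, exactly as in the projective case.
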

\begin{proof}
If the radical is just a coideal, it is in fact a Hopf ideal, in which case the set of semisimple $A$-modules is closed under tensor product \cite[cor.\ 8]{PQ}.  Then $W$ and its powers are semisimple modules made up of $s$ different simples.  The rest of the proof proceeds as the proof of the previous proposition. 
\end{proof}

The paper \cite{PQ} proves that if an $H$-module $W$ has annihilator ideal that contains no nonzero Hopf ideal of $H$, then the tensor $H$-module algebra $T(W)$ is faithful as an $H$-module.  
The paper \cite{LL} classifies pointed Hopf algebras $H$ of finite corepresentation type over an algebraically closed field; equivalently, classifies basic Hopf algebras $H^*$ of finite representation type.   The paper \cite{AEG} classifies triangular Hopf algebras over $\C$ with the Chevalley property.  

\begin{remark}
\begin{rm}
The generalized permutation module $V = H / R^+H$ is a semisimple $R$-module if any one the following three conditions is met: 
1) if $R$ is an ad-stable Hopf subalgebra (as noted above);  
2)  if the radical ideal $J$ of $R$ is left ad-stable in $H$ (for a short computation shows that $HJ \subseteq JH$, then $VJ = 0$);
3) if $\mathrm{rad}\, R \subseteq \mathrm{rad}\, H$ and $V_H$ semisimple.
\end{rm}
\end{remark}

Finally suppose  the finite tensor category $\M_H$ has only $t$ isoclasses of indecomposables for some $t \in \N$; i.e., the Hopf algebra $H$ has finite representation type. In this case we have the proposition below by an argument similar to the proofs in the propositions directly above.

\begin{prop}
\label{prop-frt}
If $H$ has finite representation type, then a module $W$ has depth $d(W, \M_H) \leq t$.  
\end{prop}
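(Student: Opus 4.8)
The plan is to mimic almost verbatim the argument used for projective and semisimple modules in the two preceding propositions, since the final statement is the ``universal'' version where the bounding finite set is simply the full set of indecomposables. The key structural fact being exploited in all three propositions is the monotonicity $\mathcal{P}_n(W) \subseteq \mathcal{P}_{n+1}(W)$ established in the lemma on $\mathcal{P}_n(W)$, together with the observation that $W$ has depth $n$ precisely when this ascending chain stabilizes, i.e. $\mathcal{P}_n(W) = \mathcal{P}_{n+1}(W)$.

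First I would recall that by the lemma quoted above, the sets $\mathcal{P}_n(W)$ of isoclasses of indecomposable summands of the truncated tensor algebra $T_n(W)$ form an ascending chain $\mathcal{P}_1(W) \subseteq \mathcal{P}_2(W) \subseteq \cdots$ in $\M_H$, and that $W$ has depth $n$ if and only if $\mathcal{P}_n(W) = \mathcal{P}_{n+1}(W)$. Next I would invoke the finite representation type hypothesis: every $\mathcal{P}_n(W)$ is a subset of the fixed finite set of $t$ isoclasses of indecomposable $H$-modules, so $|\mathcal{P}_n(W)| \leq t$ for every $n$. An ascending chain of subsets of a set of cardinality $t$, starting from $\mathcal{P}_1(W)$ which is nonempty (as $W \neq 0$), can strictly increase at most $t-1$ further times, so it must stabilize at some index $n \leq t$; concretely $\mathcal{P}_t(W) = \mathcal{P}_{t+1}(W)$ since a chain of length $t+1$ of subsets of a $t$-element set cannot be strictly increasing throughout. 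By the equivalence just recalled, this gives $d(W,\M_H) \leq t$.

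There is essentially no obstacle here: unlike the projective and semisimple cases, which needed the extra input that tensor powers stay within the projective (resp.\ semisimple) subcategory, the present statement places no closure requirement on the tensor powers because the bounding set is the \emph{entire} indecomposable spectrum of $H$, which automatically contains every $\mathcal{P}_n(W)$. The only point requiring minimal care is the off-by-one accounting in the stabilization index, which is why I would phrase it as stabilization by step $t$ rather than step $t-1$; this matches the analogous bounds $\leq s$ in the earlier two propositions and is consistent with the general principle, already stated right before the proposition, that the argument runs ``similar to the proofs in the propositions directly above.''
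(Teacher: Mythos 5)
Your argument is correct and is precisely the one the paper intends: the paper gives no written proof, stating only that the result follows ``by an argument similar to the proofs in the propositions directly above,'' and those proofs use exactly your mechanism of the ascending chain $\mathcal{P}_n(W)\subseteq\mathcal{P}_{n+1}(W)$ bounded by a finite set of indecomposables, here of cardinality $t$. Your observation that no closure hypothesis on tensor powers is needed (unlike the projective and semisimple cases) and your off-by-one accounting for stabilization at index $t$ are both right.
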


Recall that over an algebraically closed field $\overline{k}$, an algebra $A$ is \textit{basic} if it has only one-dimensional irreducible representations; over $k$, if $A / J$ is a product of division algebras, where $J$ denotes the maximal nilpotent ideal 
$\mathrm{rad}\, A$.  
By \cite[3.1]{LL}, a basic Hopf algebra over $\overline{k}$  has finite representation type (f.r.t.) if and only if it is a Nakayama algebra (i.e. each projective indecomposable has a unique composition series \cite{ASS}).  

\begin{example}
\label{ex-8dim}
\begin{rm}
Let $q$ be a primitive $n$'th root of unity in $k = \C$, and $d = n$ if $n$ is odd, 
$d = \frac{n}{2}$ if $n$ is even.  Note that $q^2$ is a primitive $d$'th root of unity. 
Let $ H_q =  \overline{U_q}(sl_2(\C))$ be  the algebra generated by $K,E,F$ where  $K^d = 1$, $E^d = 0 = F^d$, $EF - FE = \frac{K  - K^{-1}}{q - q^{-1}}$, $KE = q^2EK$, and $KF = q^{-2}FK$.  This is a $d^3$-dimensional Hopf algebra with coproduct given by $\cop(K) = K \otimes K$, $\cop(E) = E \otimes 1 + K \otimes E$ and $\cop(F) = F \otimes K^{-1} + 1 \otimes F$ (\cite{R} for an alternative set of generators and relations).  

The Hopf algebras $H_q = \overline{U_q}(sl_2)$ are basic and do not have finite representation type  for a direct reason worth noting here.  According to \cite[Theorem 3.2]{ASS}, the ordinary quiver of $H_q$ is that of  a Nakayama algebra only if at each vertex there is at most one outgoing and one incoming arrow.  Next identify the  orthogonal primitive idempotents as $e_{i+1} =  \sum_{j=0}^{d-1}(q^{2i}K)^j/d$
for $i = 0,\ldots,d-1$; note that they satisfy $e_{i+1}E = Ee_{i+2} \in e_{i+1}(J/J^2)e_{i+2}$, $Fe_{i+1} = e_{i+2}F \in e_{i+2}(J/J^2)e_{i+1}$ (where $e_{d+1} = e_1$), which form a cycle of $d$ vertices, each with two incoming and two outgoing arrows. (See \cite{JK} for when the wider class of  Frobenius-Lusztig kernels and their blocks have tame or wild representation type.)  For example, the following diagram represents the quiver  for $d = 4$: 

$$\begin{array}{ccc}
\bullet^1 & \rightleftharpoons  & \bullet^2 \\
\uparrow \downarrow &  & \uparrow \downarrow  \\
\bullet^4 &  \rightleftharpoons & \bullet^3  
\end{array}$$

We consider in more detail $H = H_i =  \overline{U_q}(sl_2(\C))$ at the $4$'th root of unity $q = i$, which is an $8$ dimensional algebra  generated  by $K,E,F$ where $K^2 = 1$, $E^2 = 0 = F^2$, $EF = FE$, $KE = -EK$, and $KF = -FK$.  Let $R$ be the Hopf subalgebra of dimension $4$ generated by $K,F$ (isomorphic to the Taft algebra considered below in Example~\ref{ex-taft}).  Note that neither $R$ nor $J := \mathrm{rad}\, R$ are ad-stable, and that $H$ is a basic algebra, self-dual and pointed.  We denote the two simples pulled back along $H \rightarrow H/J \cong \C^2$ by $S_1, S_2$. 

Consider
$V := H / R^+H$ as an $R$-module, which is spanned by $\overline{1}$ and $\overline{E}$, where $\overline{1}K = \overline{1}$, $\overline{E}K = -\overline{E}$, and $F$ annihilates.  
From Theorem~\ref{prop-Rsemisimple}, one notes that $V_R$ is not projective, which can also be obtained from Doi's observation:  the module coalgebra $V$ is projective iff
there is a right $R$-module map $\psi: C \rightarrow R$ such that $\eps_V \psi = \eps_R$ \cite{D}.  

The projective cover is $e_1H$ where $e_1 = \frac{1+K}{2}$ and $\pi: e_1H \rightarrow V$ has kernel
$C$ spanned by $e_1F$ and $e_1EF$.  
Note that $V_H$ is not semisimple, since $C \neq e_1\textrm{Rad}\, H$.
However, $V_R$ is a semisimple module, since the radical ideal $J$ in $R$ (spanned by $F$ and $FK$ in $R^+$) satisfies $HJ = JH$.    
The injective hull of $V \cong t_RH$  where $t_R = F(1 + K)$
is $e_2H = (\frac{1-K}{2})H$.

 The composition series of $e_1H$ and $e_2H$ have length $4$ and are non-unique, with radical length  $3$:  $e_1H \supset e_1J \supset e_1J^2 \supset \{ 0 \}$ (a second and third proof that $H$ is not Nakayama \cite{ASS}). The Cartan map $K_0(H) \rightarrow G_0(H)$ is given by $[xH]\mapsto 2[S_1] + 2[S_2]$ for both
$x = e_1,e_2$,  a symmetric matrix; indeed (cf.\ \cite{L}) $H$ (and any $\overline{U_q}(sl_2)$)
has $S^2$ an inner automorphism and is unimodular, whence it is a symmetric algebra.

\end{rm}
\end{example}
\subsection{Relative Hopf modules} Next we show that finite-dimensional relative Hopf restricted modules have depth $1$.  Again let $R \subseteq H$ be a Hopf subalgebra pair.  Recall that a vector space $N$ is a right $(H,R)$-Hopf module if $N$ is a right $R$-module, and $N$ is a right $H$-comodule such that $(nr)\0 \otimes (nr)\1 = n\0 r\1 \otimes n\1 r\2$
for all $r \in R, n \in N$; if $N$ is moreover the restriction of an $H$-module, we refer to it as a relative Hopf restricted module.  
The next proposition extends \cite[Lemma 3.1.4]{M} for our purposes.  

\begin{prop} 
\label{prop-withconsequence}
Given a relative Hopf module N and right $H$-module $M$, the following is an isomorphism of right $R$-modules: \begin{equation}
N_. \otimes M \cong N_. \otimes M_.
\end{equation}
where the right-hand side is the tensor product in $\M_R$.
\end{prop}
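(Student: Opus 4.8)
The plan is to prove the isomorphism by writing down an explicit untwisting map together with its inverse, so that the whole statement reduces to two short Sweedler-notation verifications. First I would unwind the notation: the left-hand side $N_\cdot \otimes M$ carries the $R$-action concentrated on the first tensorand, $(n \otimes m)r = nr \otimes m$, while the right-hand side $N_\cdot \otimes M_\cdot$ is the tensor product in $\M_R$, i.e.\ the diagonal action $(n \otimes m)r = nr\1 \otimes mr\2$ for $r \in R$ (here $\cop(R) \subseteq R \otimes R$, so $r\1, r\2 \in R$ act on $N$ via its $R$-module structure and on $M$ via restriction of its $H$-module structure). Since both sides are ordinary $k$-tensor products, any $k$-linear map prescribed on elementary tensors is automatically well defined, so no descent-to-a-quotient issue arises.

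Next I would define $\Phi \colon N_\cdot \otimes M \to N_\cdot \otimes M_\cdot$ on elementary tensors by $\Phi(n \otimes m) = n\0 \otimes m\, n\1$, using the right $H$-comodule structure of $N$ and the right $H$-action of $M$ (so that $m\, n\1$ makes sense for $n\1 \in H$). To check that $\Phi$ is a morphism of right $R$-modules I would apply it to $(n \otimes m)r = nr \otimes m$ and invoke the relative Hopf module compatibility $(nr)\0 \otimes (nr)\1 = n\0 r\1 \otimes n\1 r\2$, obtaining $n\0 r\1 \otimes m\, n\1 r\2$; on the other hand the diagonal action gives $\Phi(n \otimes m)\,r = (n\0 \otimes m\, n\1)r = n\0 r\1 \otimes m\, n\1 r\2$, and the two agree. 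This single identity is the only place the Hopf module axiom is used.

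To finish I would exhibit the inverse. Because $H$ is finite-dimensional its antipode $S$ is bijective, and I claim $\Psi(n \otimes m) = n\0 \otimes m\, S^{-1}(n\1)$ inverts $\Phi$. Computing $\Psi\Phi(n \otimes m)$ and using comodule coassociativity to rewrite the iterated coaction as $n\0 \otimes n\1 \otimes n\2$, one arrives at $n\0 \otimes m\, n\2 S^{-1}(n\1)$, which collapses to $n \otimes m$ by the identity $h\2 S^{-1}(h\1) = \eps(h)1$ followed by the comodule counit law $\eps(n\1)\, n\0 = n$; the composite $\Phi\Psi$ is handled symmetrically using $S^{-1}(h\2)h\1 = \eps(h)1$. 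Since $\Phi$ is thereby a $k$-linear bijection that is also $R$-linear, it is the desired isomorphism of right $R$-modules.

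The step I expect to be the genuine obstacle, rather than the routine bookkeeping, is pinning down the antipode: the naive guess $n\0 \otimes m\, S(n\1)$ for the inverse fails, because after applying coassociativity one is left with products of the form $h\2 S(h\1)$, which are \emph{not} equal to $\eps(h)1$ in a general noncommutative, noncocommutative Hopf algebra. It is precisely this ordering of the tensorands that forces $S^{-1}$ into the argument, and hence brings in the finite-dimensionality hypothesis. Once the isomorphism is established, the intended consequence is immediate: taking $M = N$, the left-hand side is a direct sum of $\dim_k N$ copies of $N_\cdot$, so $N^{\otimes 2} \sim N$ in $\M_R$ and the relative Hopf restricted module $N$ has depth $1$.
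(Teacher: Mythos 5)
Your proof is correct and coincides with the paper's: the paper gives exactly the forward map $n \otimes m \mapsto n\0 \otimes m\,n\1$ and the inverse $n' \otimes m' \mapsto {n'}\0 \otimes m'\,S^{-1}({n'}\1)$ using the composition-inverse of the antipode. You have simply written out the Sweedler verifications (the Hopf-module compatibility for $R$-linearity and the identities $h\2 S^{-1}(h\1) = \eps(h)1 = S^{-1}(h\2)h\1$) that the paper leaves implicit.
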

\begin{proof}
The forward mapping is given by $n \otimes m \mapsto n\0 \otimes m n\1$.  The inverse mapping is given by $n' \otimes m' \mapsto {n'}\0 \otimes m' \overline{S}({n'}\1)$, where $\overline{S}$ is the composition-inverse of the antipode.  
\end{proof}  
Since each free module $H^n$ over a Hopf algebra $H$ is a Hopf module (and relative Hopf module with respect to any Hopf subalgebra), it follows from  Proposition~\ref{prop-withconsequence} that $P \otimes M$
is projective in $\M_H$ for any projective $P \in \M_H$ for any ground field $k$.  

\begin{cor}
\label{cor-one}
Suppose $N$ is a right $(H,R)$-Hopf restricted module. Then depth $d(N, \M_R) \leq 1$.  
\end{cor}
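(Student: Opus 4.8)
The plan is to prove that a right $(H,R)$-Hopf restricted module $N$ satisfies $d(N,\M_R) \leq 1$ by exhibiting directly that $N^{\otimes 2}$ divides a multiple of $N$ in $\M_R$, which by Definition~\ref{def-moduledepth} is exactly the assertion of depth $\leq 1$. The key tool is Proposition~\ref{prop-withconsequence}: since $N$ is a relative Hopf module and $N$ itself is an $H$-module (being a restricted module), I may take $M = N$ in that proposition to obtain an isomorphism of right $R$-modules $N_{\cdot} \otimes N \cong N_{\cdot} \otimes N_{\cdot}$, where the right-hand side is the genuine $\M_R$-tensor $N \otimes N = N^{\otimes 2}$ and the left-hand side has the tensor $H$-action only through the second factor restricted to $R$.

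First I would observe that the left-hand module $N_{\cdot} \otimes N$ — where the first tensorand carries its diagonal-free (i.e.\ trivial-on-that-slot) structure and only the second copy of $N$ is acted on by $R$ — is, as a right $R$-module, simply a direct sum of $\dim N$ copies of $N_R$. Concretely, fixing a $k$-basis $\{n_1,\dots,n_d\}$ of $N$, the underlying space $N \otimes N$ with $R$ acting only on the right factor decomposes as $\bigoplus_{i=1}^d (n_i \otimes N) \cong (\dim N)\,N$ as right $R$-modules. Hence $N^{\otimes 2} \cong N_{\cdot}\otimes N \cong (\dim N)\, N = qN$ with $q = \dim N$, which gives not merely $N^{\otimes 2} \| qN$ but an outright isomorphism. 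This immediately yields $d(N,\M_R) \leq 1$.

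The main (and only real) obstacle is a bookkeeping one: making precise which factor of each tensor product carries the $R$-action, since the proposition's two sides differ exactly in the placement of the module structure, and it is easy to conflate the diagonal $\M_R$-action on $N^{\otimes 2}$ with the one-sided action on $N_{\cdot}\otimes N$. I would therefore state explicitly that Proposition~\ref{prop-withconsequence} with $M = N$ intertwines the diagonal tensor $N^{\otimes 2}$ in $\M_R$ (its right-hand side) with the one-sided module $N_{\cdot}\otimes N$ (its left-hand side), and that it is the latter whose triviality in the first slot makes it a free-like direct sum of copies of $N$. Once this is pinned down, the free splitting of the first tensorand is immediate and no further computation is required; the forward and inverse maps $n\otimes m \mapsto n\0 \otimes m n\1$ and $n'\otimes m' \mapsto n'\0 \otimes m'\overline{S}(n'\1)$ supplied in the proposition already certify the $R$-linear isomorphism, so the corollary follows with essentially no additional argument beyond citing Definition~\ref{def-moduledepth}.
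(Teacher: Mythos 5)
Your proposal is correct and is essentially the paper's own proof: set $M=N$ in Proposition~\ref{prop-withconsequence} to get $N^{\otimes 2}\cong N_{\cdot}\otimes N\cong(\dim N)\,N$ in $\M_R$, which is the depth~$1$ condition. One small notational point: in $N_{\cdot}\otimes M$ the dot marks the factor that carries the $R$-action, so it is the \emph{first} tensorand that is acted on and the second that is passive (as one checks from the compatibility $(nr)\0\otimes(nr)\1=n\0 r\1\otimes n\1 r\2$ and the forward map), but since $M=N$ here this does not affect your conclusion.
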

\begin{proof}
Let $M = N$ in the proposition, so that $N \otimes N \cong (\dim N)N$ in additive notation, a depth $1$ condition in $\textrm{Mod-}R$.  
\end{proof} 
If one asks when the module $V$, formed from a Hopf subalgebra $R \subseteq H$, is a relative Hopf restricted module, a necessary condition for this is that $V_R$ is free, since relative Hopf modules are free by
a result of Nichols-Zoeller \cite{M}.  So a necessary condition
for $V$ to be relative Hopf module is that $\dim H = (\dim R)^2 r$ for some integer $r \geq 1$, and that   $R$ is semisimple by Proposition~\ref{prop-Rsemisimple}.  
If $V$ is a relative Hopf module, it follows from Theorem~\ref{theorem-main} below that the depth $d(R,H) < 5$.  The next proposition provides a sufficient condition for $V$ to be a relative Hopf module; the proof is straightforward and left to the reader. 
  
\begin{prop}
Given $R \subseteq H$ a finite Hopf subalgebra pair, suppose there is a module morphism $\phi: V_R \rightarrow H_R$ of modules that is simultaneously a coalgebra morphism.  Then $\rho := (V \otimes \phi) \circ \cop_V$ defines a right $H$-comodule structure on $V$ making $V$ an $(H,R)$-Hopf module.
\end{prop}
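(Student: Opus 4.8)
The plan is to verify that the candidate map $\rho := (V \otimes \phi) \circ \cop_V : V \to V \otimes H$ satisfies the two axioms of a right $H$-comodule structure and is moreover compatible with the right $R$-module structure in the sense required for an $(H,R)$-Hopf module. Concretely, writing $\cop_V(v) = \overline{h\1} \otimes \overline{h\2}$ for $v = \overline{h}$, the comodule structure is $\rho(v) = \overline{h\1} \otimes \phi(\overline{h\2})$, so that in Sweedler notation $v\0 = \overline{h\1}$ and $v\1 = \phi(\overline{h\2}) \in H$. I would first record the two things I am given about $\phi$: it is a right $R$-module map, $\phi(\overline{h}\, r) = \phi(\overline{h})r$, and it is a coalgebra morphism, meaning $\cop_H(\phi(\overline{h})) = \phi(\overline{h\1}) \otimes \phi(\overline{h\2})$ and $\eps_H \circ \phi = \eps_V$.

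\medskip
First I would check the counit axiom $(\id_V \otimes \eps_H) \circ \rho = \id_V$. Applying it to $v = \overline{h}$ gives $\overline{h\1}\,\eps_H(\phi(\overline{h\2}))$, and since $\phi$ is a coalgebra map we have $\eps_H \circ \phi = \eps_V$, so this collapses via counitality of $\cop_V$ to $\overline{h\1}\eps_V(\overline{h\2}) = \overline{h} = v$. Next I would verify coassociativity, $(\rho \otimes \id_H)\circ\rho = (\id_V \otimes \cop_H)\circ\rho$. The left side produces $\overline{h\1} \otimes \phi(\overline{h\2}) \otimes \phi(\overline{h\3})$ using coassociativity of $\cop_V$, while the right side produces $\overline{h\1} \otimes \cop_H(\phi(\overline{h\2}))$; these agree precisely because $\phi$ is a coalgebra morphism, i.e.\ $\cop_H \circ \phi = (\phi \otimes \phi)\circ \cop_V$ on $V$. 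So both comodule axioms reduce directly to the coalgebra-morphism hypothesis on $\phi$ together with the coassociativity and counitality of the module coalgebra $V$.

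\medskip
The step I expect to be the genuine content is the Hopf-compatibility condition, namely that for all $r \in R$ and $v \in V$ one has $(vr)\0 \otimes (vr)\1 = v\0\, r\1 \otimes v\1\, r\2$. Here I would use that $V$ is a right $H$-module coalgebra, so $\cop_V(vr) = \overline{(hr)\1}\otimes \overline{(hr)\2} = \overline{h\1 r\1} \otimes \overline{h\2 r\2}$, giving $(vr)\0 \otimes (vr)\1 = \overline{h\1 r\1} \otimes \phi(\overline{h\2 r\2})$. Now I apply the right-$R$-linearity of $\phi$ to the second tensorand: $\phi(\overline{h\2 r\2}) = \phi(\overline{h\2})\,r\2$ (using that $\overline{h\2 r\2} = \overline{h\2}\cdot r\2$ as $R$ acts on $V$). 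Meanwhile $v\0 r\1 \otimes v\1 r\2 = \overline{h\1 r\1}\otimes \phi(\overline{h\2}) r\2$, and the two expressions match, where the coassociativity bookkeeping of the four Sweedler factors $r\1,\dots$ is aligned against $h\1,h\2$. The only subtlety to watch is that the $R$-action on the coalgebra $V$ is the module-coalgebra action (so $\cop$ is an $R$-module map), which is exactly what lets $\cop_V(vr)$ split with matching indices of $r$; once that is in place the identity is a direct Sweedler computation. Having established the comodule axioms and this compatibility, $V$ with $\rho$ is by definition a right $(H,R)$-Hopf module, completing the proof.
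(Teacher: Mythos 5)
Your verification is correct and is exactly the "straightforward" argument the paper leaves to the reader: the counit and coassociativity axioms for $\rho$ reduce to $\phi$ being a coalgebra morphism together with the coalgebra structure of $V$, and the relative Hopf-module compatibility reduces to the module-coalgebra property of $\cop_V$ combined with the right $R$-linearity of $\phi$. Nothing is missing; the Sweedler bookkeeping in the compatibility step is handled correctly.
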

It follows from the injectivity of $\rho$ and $\cop_V$ that $\phi$ must be injective.   

\subsection{Smash products and Hopf subalgebras}
It is interesting to pursue the depth of the left $H$- or $R$-module algebra $V^*$ where $V = H /R^+H$ is a right $H$- and $R$-module coalgebra, Note that there is  an augmentation $ \alpha: V^* \to k$ defined by $v^* \mapsto v^*(\overline{1_H})$: this augmentation  extends to an augmentation  $\alpha \otimes \eps: V^* \# R \to k$ for the smash product algebra of $V^*$ with $R$ \cite[Ch.\ 4]{M}.   
The module depths are equal, $d(V, \M_H) = d(V^*, {}_H\M)$, since $(V^{\otimes n})^* \cong (V^*)^{\otimes n}$ and $V^{**} \cong V$.   Note that the dual of the epi $H \stackrel{\pi}{\rightarrow} V$ of right $H$-module coalgebras is the monomorphism of $H$-module algebras
$\pi^*: V^* \rightarrow H^*$: the mapping $E: H^* \rightarrow V^*$, defined using Lemma~\ref{lemma-right} by $h^* \mapsto h^*(t_R -)$, is an arrow in the category ${}_{V^*}\M_{V^*}$, which is a Frobenius homomorphism with dual bases $\{ S^{-1}(\lambda\2) \}$, $\{ \lambda\1 \}$, where $\lambda$ is a right integral in $R^*$ such that $\lambda(t_R) = 1$;  this data makes $H^*$ a \textit{Frobenius extension} of $V^*$
\cite{K}.
The mapping $E: {}_HH^* \rightarrow {}_HV^*$ is also left $H$-linear.  Further, $H^*$ is a split extension of $V^*$  if  $R$ is semisimple.
The mapping $\pi^*$  induces the monomorphism on smash products $V^* \# H \into H^* \# H \cong M_{\dim H}(k)$, embedding $V^* \# H$ in the Heisenberg double of $H$ \cite[Ch.\ 9]{M}.  

We record
some of these observations in the proposition below, the details of the proof being left to the reader. Thinking of the self-dual noncocommutative Taft Hopf algebra $H$ in Example~\ref{ex-taft} and $R$ the cyclic group subalgebra in $H$, one passes first to the cocommutative $H$-module coalgebra $V$, then
to the commutative Frobenius-Nakayama subalgebra $V^*$ generated by the nilpotent element $x$, which is a well-known ordinary (i.e., non-twisted) split Frobenius extension $V^* \into H$ \cite{K}. 
The proposition offers a generally applicable method of passing from the twisted Frobenius extension $R \subseteq H$ of a Hopf subalgebra
to an ordinary Frobenius extension $V^* \into H^*$.

\begin{prop}
Given a finite Hopf subalgebra pair $R \subseteq H$, there is $ H \rightarrow V \rightarrow 0$ the canonical epimorphism of right $H$-module coalgebras. Then the dual monomorphism of left $H$-module algebras $$0 \longrightarrow V^* \longrightarrow H^*$$ is an ordinary free Frobenius extension of rank $\dim R$ (with Frobenius coordinate system given above).  
\end{prop}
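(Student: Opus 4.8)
The plan is to verify that $\pi^*\colon V^* \to H^*$ is an ordinary (untwisted) Frobenius extension of rank $\dim R$ by exhibiting the explicit Frobenius coordinate system recorded in the discussion preceding the statement, namely the Frobenius homomorphism $E\colon H^* \to V^*$ given by $h^* \mapsto h^*(t_R\,\under\,)$ together with the dual bases $\{S^{-1}(\lambda\2)\}$ and $\{\lambda\1\}$, where $\lambda$ is a right integral in $R^*$ normalized so that $\lambda(t_R)=1$ and $t_R$ is a nonzero right integral in $R$. First I would confirm the algebra-theoretic setup: since $\pi\colon H \to V$ is an epimorphism of right $H$-module coalgebras, dualizing gives a monomorphism of (left $H$-)module algebras $\pi^*\colon V^* \into H^*$, and by Lemma~\ref{lemma-right} the identification $V \cong t_R H$ lets us realize $V^*$ concretely as the subalgebra $\{h^*(t_R\,\under\,) : h^* \in H^*\}$-dual data inside $H^*$, so that $E$ is genuinely $V^*$-$V^*$-bilinear. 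The rank assertion then reduces to $\dim H^* / \dim V^* = \dim H / \dim V = \dim R$ via the Nichols-Zoeller count $\dim V = \dim H / \dim R$ noted after Lemma~\ref{lem-cordoba}.

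The core computational step is to check the two defining Frobenius identities for the coordinate system. First, that $E$ is a $V^*$-bimodule projection, i.e.\ $E(\pi^*(u)\,h^*) = u\,E(h^*)$ and the right-handed analogue for all $u \in V^*, h^* \in H^*$; this is where the module-algebra structure of $V^*$ and the formula $h^* \mapsto h^*(t_R\,\under\,)$ interact with the convolution product of $H^*$. Second, the dual-basis equation $\sum S^{-1}(\lambda\2)\,E(\lambda\1\,h^*) = h^* = \sum E(h^*\,S^{-1}(\lambda\2))\,\lambda\1$; here the normalization $\lambda(t_R)=1$ and the defining property of the integral $\lambda$ feed in, and one uses that $t_R$ is a right integral in $R$ so that $t_R r = \eps(r) t_R$. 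The untwistedness (ordinariness) is precisely the statement that no nontrivial Nakayama automorphism appears, which I would deduce from the symmetry of the two dual-basis sums above being genuine inverses of the same identity map rather than differing by a modular twist.

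The main obstacle I anticipate is the bookkeeping in the antipode and integral manipulations: showing that the stated dual bases really do reproduce the identity requires the Radford-type formula relating $S^{-1}(\lambda\2) \otimes \lambda\1$ to the integral structure, together with carefully tracking how $t_R$ and $\lambda$ pair under the $R^*$-$R$ and $H^*$-$H$ dualities. In contrast to the twisted Frobenius extension $R \subseteq H$ (whose modular function $\alpha$ appears in Corollary~\ref{cor-hsepHopf}), the point is that passing to duals and using the module-coalgebra structure of $V$ absorbs the twist, yielding an \emph{ordinary} extension; verifying that this absorption is complete, and not merely partial, is the delicate part. Since the author explicitly defers the proof to the reader and the coordinate system has already been named, I would present the verification as a direct, if somewhat technical, Sweedler-notation calculation, emphasizing that freeness of $H^*$ over $V^*$ of rank $\dim R$ follows from the existence of a finite dual basis whose cardinality matches the rank computed above.
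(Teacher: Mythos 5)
Your proposal takes essentially the same route as the paper: the paper only records the Frobenius coordinate system $E(h^*) = h^*(t_R\,\under)$ with dual bases $\{S^{-1}(\lambda_{(2)})\}$, $\{\lambda_{(1)}\}$ in the paragraph preceding the proposition and explicitly leaves the verification to the reader, and your plan is precisely to carry out that verification (bilinearity of $E$, the dual-basis identities, and the rank count $\dim H^*/\dim V^* = \dim R$ via Nichols--Zoeller). This matches the intended argument; the only caveat is that the Sweedler/integral computation you defer is the entire content of the proof, so a complete write-up would still need those calculations made explicit.
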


\begin{remark}
\label{remark-smash}
\begin{rm}
It is shown in \cite{CY} that the subalgebra depth of a Hopf algebra $H$
in a smash product with any left $H$-module algebra $A$ satisfies 
\begin{equation}
\label{eq: young}
d_{\mathrm{odd}}(H, A \# H) = 2 d(A, {}_H\M) + 1,
\end{equation}
where $d_{\mathrm{odd}}(R,H) = 2\lceil \frac{d(R,H)-1}{2} \rceil +1$ is the least odd integer greater than or equal to $d(R,H)$, based on cancellations of the type $H \otimes_H M_i \cong M_i$ for  left $H$-modules
$M_i$ in the tensor power $H$-$H$-bimodule $(A \# H)^{\otimes_H n}$.  As we show in Corollary~\ref{cor-equivalence}, this implies that the Hopf subalgebra $R \subseteq H$ has finite depth if and only if depth of $H$ in the smash product $V^* \# H$ is finite.
\end{rm}  
\end{remark}


\section{Equalities between Hopf subalgebra depth and quotient module depth}
\label{sec: UB}
Again suppose $R$ is a Hopf subalgebra of a Hopf algebra $H$ with  $V$ denoting the right $R$-module $H/ R^+H$.  Let $d_{\mathrm{even}}(R,H) = 2 \lceil \frac{d(R,H)}{2} \rceil$ be the least even number greater than or equal to $d(R,H)$.  
\begin{theorem}
\label{theorem-main}
The depths of a Hopf subalgebra and its generalized permutation module $V$ are related by  $d_{\mathrm{even}}(R,H) = 2d(V,\M_R)+2$. Moreover the h-depth satisfies $d_h(R,H) = 2 d(V,\M_H) +1$.  
\end{theorem}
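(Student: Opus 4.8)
The plan is to prove the two equalities separately, translating each side into a statement about the divisibility chains of tensor powers and then invoking Proposition~\ref{prop-tensorequation}, which is the crucial bridge: it identifies $H^{\otimes_R n}$ with $H \otimes V^{\otimes(n-1)}$ as $H$-$H$-bimodules, and more generally $W \otimes_R H^{\otimes_R n} \cong W \otimes V^{\otimes n}$ as right $H$-modules. I would handle the h-depth equality $d_h(R,H) = 2d(V,\M_H)+1$ first, since it is the cleaner of the two. Recall from Section~2 that $R \subseteq H$ has h-depth $2n-1$ exactly when $H^{\otimes_R(n+1)} \| q\, H^{\otimes_R n}$ as $H$-$H$-bimodules for some $q$. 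Using Eq.~(\ref{eq: fundament}), this divisibility becomes $H \otimes V^{\otimes n} \| q\,(H \otimes V^{\otimes(n-1)})$ as $H$-$H$-bimodules.

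The key reduction for the h-depth half is to strip off the left tensor factor of $H$ and pass to the category $\M_H$. I would argue that $H \otimes V^{\otimes n} \| q\,(H \otimes V^{\otimes(n-1)})$ as $H$-$H$-bimodules is equivalent to $V^{\otimes n} \| q\, V^{\otimes(n-1)}$ as right $H$-modules: the forward direction comes from applying $k_\eps \otimes_H -$ (or restricting along the inclusion of the trivial module, using $k_\eps \otimes_H (H \otimes X) \cong X$) to cancel the leftmost $H$; the reverse direction comes from tensoring on the left with ${}_HH$. Once this equivalence is established, $V^{\otimes n} \| q\, V^{\otimes(n-1)}$ in $\M_H$ is precisely the statement (via Proposition~\ref{prop-sepext} and the simplification noted after Definition~\ref{def-moduledepth}, since $V$ is a module coalgebra) that $V$ has depth $n-1$ in $\M_H$. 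Thus $d_h(R,H) = 2n-1$ with $n$ minimal corresponds to $d(V,\M_H) = n-1$ minimal, yielding $d_h(R,H) = 2(d(V,\M_H)+1)-1 = 2d(V,\M_H)+1$ directly.

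For the even-depth equality I would use the same engine but now with bimodule structures over $R$ rather than over $H$ on one side. The even depth $d_{\mathrm{even}}(R,H) = 2n$ records when $H^{\otimes_R(n+1)} \sim H^{\otimes_R n}$ as, say, $R$-$H$-bimodules (left or right depth $2n$). Applying Eq.~(\ref{eq: fundament}) and restricting the left $H$-action to $R$, the comparison becomes one of ${}_RH_\cdot \otimes V^{\otimes n}$ against ${}_RH_\cdot \otimes V^{\otimes(n-1)}$. Here I would invoke Lemma~\ref{lem-cordoba} in the form $H \otimes_R H \cong H \otimes V$ together with the freeness $H \cong R \otimes V$ as left $R$-modules (the Nichols-Zoeller consequence recorded after Lemma~\ref{lem-cordoba}) to replace the left factor ${}_RH$ by a multiple of copies of $V$, reducing the $R$-$H$-bimodule comparison to a comparison of $V^{\otimes(n+1)}$ against $V^{\otimes n}$ purely in $\M_R$. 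This identifies the minimal such $n$ as $d(V,\M_R)+1$, giving $d_{\mathrm{even}}(R,H) = 2(d(V,\M_R)+1) = 2d(V,\M_R)+2$.

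The main obstacle I anticipate is the cancellation step in both halves: justifying rigorously that the left tensorand $H$ (resp.\ ${}_RH$) can be removed from the divisibility relation without losing or gaining indecomposable constituents, i.e.\ that the functor $- \mapsto H \otimes -$ (and its one-sided inverse) genuinely reflects and preserves the similarity relation $\sim$ and divisibility $\|$. For the h-depth case this hinges on the natural isomorphism $k_\eps \otimes_H (H_\cdot \otimes X) \cong X$ being functorial and exact enough to transport direct-sum decompositions; for the even case it additionally requires controlling how the left $R$-module structure on $H$ interacts with the diagonal $H$-action on the tensor powers of $V$, where the freeness isomorphism $H \cong R \otimes V$ must be checked to be compatible with the relevant bimodule structures. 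I would treat this faithful-flatness / cancellation lemma as the technical heart, after which both equalities follow by matching the minimal indices, and I would take care that the inequalities derived separately ($\leq$ and $\geq$) combine to the stated equalities rather than merely the bounds $2d(V,\M_R)+1 \leq d(R,H) \leq 2d(V,\M_R)+2$ mentioned in the introduction.
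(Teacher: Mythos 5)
Your treatment of the h-depth equality is essentially the paper's own argument: convert $H^{\otimes_R (n+1)} \sim H^{\otimes_R n}$ as $H$-$H$-bimodules into ${}_HH_{\cdot} \otimes V_{\cdot}^{\otimes n} \sim {}_HH_{\cdot} \otimes V_{\cdot}^{\otimes (n-1)}$ via Proposition~\ref{prop-tensorequation}, cancel the left factor with $k_{\eps} \otimes_H -$ (the left $H$-action sits only on the first tensorand, so this is a clean additive cancellation), and recover the converse by applying ${}_HH_{\cdot} \otimes -$. That half is correct, and your uniform cancellation even absorbs the $d(V,\M_H)=0$ case that the paper routes through Corollary~\ref{cor-hsepHopf}.

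The even-depth half as you describe it has a genuine gap. You work with the \emph{left} depth $2n$ condition, i.e.\ with $R$-$H$-bimodules ${}_RH_{\cdot} \otimes V_{\cdot}^{\otimes m}$, and propose to eliminate the factor ${}_RH$ using the Nichols--Zoeller freeness $H \cong R \otimes V$ as left $R$-modules. This cannot work as stated: in ${}_RH_{\cdot} \otimes V_{\cdot}^{\otimes m}$ the tensorand $H$ carries the left $R$-action \emph{and} participates in the diagonal right $H$-action, and the freeness isomorphism (which sees only the left $R$-structure, and presents ${}_RH$ as a free module, not as copies of $V$) is not a map of $R$-$H$-bimodules, so substituting $R \otimes V$ for $H$ destroys the very right-module structure being compared. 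Moreover, the cancellation that \emph{is} available on this side, $k \otimes_R -$, sends ${}_RH_{\cdot} \otimes V_{\cdot}^{\otimes m}$ to $V^{\otimes (m+1)}$ as a right $H$-module (since $k \otimes_R H \cong V$ and the diagonal action descends), so the left-depth condition naturally controls $d(V,\M_H)$ --- that is, the h-depth again --- and yields only a loose inequality for $d(V,\M_R)$, not the claimed equality $d_{\mathrm{even}}(R,H) = 2d(V,\M_R)+2$. The paper instead uses the \emph{right} depth $2n+2$ condition, i.e.\ $H$-$R$-bimodules ${}_HH_{\cdot} \otimes V_{\cdot}^{\otimes m}$ with $R$ acting diagonally on the right: there the same cancellation $k \otimes_H -$ from your h-depth argument strips off the $H$ and leaves exactly $V^{\otimes m}$ with its diagonal right $R$-action, and the converse is again just ${}_HH_{\cdot} \otimes -$ applied to $V^{\otimes n} \sim V^{\otimes (n+1)}$ in $\M_R$. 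Switching your even-depth argument to the right-depth formulation repairs the proof and makes the Nichols--Zoeller detour unnecessary.
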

\begin{proof}
   Suppose $d(V,\M_R) = n$.  Then $V^{\otimes n} \sim V^{\otimes (n+1)}$ as right $R$-modules, since
$V$ is a right $R$-module coalgebra. It follows from tensoring by ${}_HH_. \otimes -$ and applying Prop.\ \ref{prop-tensorequation} twice that $H^{\otimes_R (n+1)} \sim H^{\otimes_R (n+2)}$ as $H$-$R$-bimodules;
this is the right depth $2n+2$ condition. It follows that 
$d_{\mathrm{even}}(R,H) \leq 2d(V,\M_R) +2$.

Conversely, if $R$ in $H$ has depth $2n+2$, then $H^{\otimes_R (n+1)} \sim H^{\otimes_R (n+2)}$ as $H$-$R$-bimodules; consequently, ${}_HH_{\cdot} \otimes V_{\cdot}^{\otimes n} \sim {}_HH_{\cdot} \otimes V_{\cdot}^{\otimes (n+1)}$ as $H$-$R$-bimodules from two applications of Prop.\ \ref{prop-tensorequation}. Tensoring both sides of the similarity by $k \otimes_H -$, one obtains cancellation and $V^{\otimes n} \sim V^{\otimes (n+1)}$ as right $R$-modules.  It follows that $d(V,\M_R) \leq n$, so that $d_{\mathrm{even}}(R,H) = 
2n+2 \geq 2 d(V, \M_R) +2$.  The two inequalities above yield the equality in the first statement of the theorem.  

 Suppose $d(V, \M_H) = n \geq 1$.  Then arguing as in the first paragraph of the proof, one arrives at  $H^{\otimes_R (n+1)} \sim  H^{\otimes_R (n+2)}$ as  $H$-$H$-bimodules, which is the h-depth $2n+1$ condition.  
Then $d_h(R,H) \leq 2d(V, \M_H) +1$.   Conversely, if $d_h(R,H) = 2n+1 \geq 3$, we arrive similarly to the second paragraph at ${}_HH_{\cdot} \otimes {V_{\cdot}}^{\otimes n} \sim {}_HH_{\cdot} \otimes {V_{\cdot}}^{\otimes (n+1)}$, obtaining cancellation by applying the additive
functor $k \otimes_H -$ to both sides of the similarity: whence $V^{\otimes n} \sim V^{\otimes (n+1)}$ as right $H$-modules.  It follows that $d_h(R,H) \geq 2d(V, \M_H) +1$ and that the equality in the second statement of the theorem holds.  

Finally, if $d(V, \M_H) = 0$, then $V \| k \oplus \cdots \oplus k$, and so ${}_HH_{\cdot} \otimes V_{\cdot} \| {}_H(H \oplus \cdots \oplus H)_H$.  It follows from Prop.~\ref{prop-tensorequation}
that ${}_HH \otimes_R H_H$ divides a multiple of $H$, the h-depth $1$ condition (H-separability). 
Conversely, if the h-depth $1$ condition is satisfied by $R \subseteq H$,  Cor.~\ref{cor-hsepHopf} shows that $R= H$, whence $V = H/H^+ \cong k$ has depth $0$. 
\end{proof}

\begin{example}
\begin{rm}
Suppose $H$, hence $R$, are semisimple complex Hopf algebras with $r \times s$ inclusion matrix $M$. As outlined in the introduction to depth in Section~2, the h-depth $d_h(R,H)$
is determined by the $s \times s$ matrix $T = M^tM$ which has $i,j$-entries $\bra U_R(\chi_i) , \chi_j \ket_H$  $ = $  $ \bra \chi_i \downarrow_R, \chi_j \downarrow_R \ket_R$ where $\chi_i, \chi_j \in \mathrm{Irr}\, (H)$ ($1 \leq i,j \leq s$).  The h-depth is $2n+1$ iff the matrix $T^n$ has equally
many zero entries as $T^{n+1}$.  In this case we may read the result of the theorem above in this limited setting from
Eq.~(\ref{eq: basic}), which equivalently states that for each $i,j = 1,\ldots,s$, 
\begin{equation}
\bra U^n(\chi_i),\, \chi_j \ket = \bra \chi_i {\chi_V}^n, \, \chi_j \ket
\end{equation}
for each $n \geq 0$. 
The increasing powers of the $H$-character $\chi_V$ stop acquiring new irreducible constituents,
$\mathrm{Irr}\, ({\chi_V}^n) = \mathrm{Irr}\, ({\chi_V}^{n+1}) \subseteq \mathrm{Irr}\, (H)$ at $n = d(V, \M_H)$.  
\end{rm}
\end{example}
It follows from the implication h-depth $2n-1 \Rightarrow$ depth $2n$ and from the theorem that the depths of $R \subseteq H$ and the $H$-module $V$ are related by
\begin{equation}
\label{eq: main}
 2d(V, \M_H) - 1 \leq d(R,H) \leq 2d(V, \M_H) + 2,
\end{equation}
whereas the depths of $R \subseteq H$ and $V_R$ are more closely related as
\begin{equation}
2d(V, \M_R) +1 \leq d(R,H) \leq 2d(V, \M_R) + 2.
\end{equation}

Recall that an element $x \in H$ is normal if $xH = Hx$. Let $t_R$ be a right integral in $R$.  The following proposition may be viewed as a type of generalization to nonsemisimple Hopf algebras of Masuoka's theorem that $t_R$ is central in a semisimple Hopf algebra $H$ iff $R$ is ad-stable in $H$. 

\begin{prop}
A Hopf subalgebra $R$ is ad-stable in $H$ if and only if $t_R$ is a normal element in $H$.  
\end{prop}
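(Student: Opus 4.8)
The plan is to prove both implications by translating the normality condition $xH = Hx$ for $x = t_R$ into the ad-stability condition $R^+H = HR^+$, which by earlier remarks is equivalent to $R$ being ad-stable. The key bridge is Lemma~\ref{lemma-right}, which identifies $V = H/R^+H$ with $t_RH$ as right $H$-modules via $\overline{h} \mapsto t_Rh$. Dually, one should expect $H/HR^+$ to be identified with $Ht_R$ as a left $H$-module by a symmetric argument (using that $t_R$, being a right integral, satisfies $rt_R = \eps(r)t_R$ for $r \in R$, so $HR^+t_R = 0$ gives the left-module version). The ad-stable condition $R^+H = HR^+$ is what makes $V$ a trivial right $R$-module, as noted after Corollary~\ref{cor-hsepHopf}.

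First I would prove the easier direction, that ad-stability implies $t_R$ normal. If $R$ is ad-stable, then $R^+H = HR^+$ as two-sided sets, and I want to show $t_RH = Ht_R$. Using Lemma~\ref{lemma-right}, $t_RH \cong H/R^+H$ and the dual statement gives $Ht_R \cong H/HR^+$; under ad-stability these quotients coincide as subspaces of $H$, which should force $t_RH = Ht_R$. More directly, I would compute: for $h \in H$, write the right integral property against $\cop$ and the antipode to express $ht_R$ in terms of $t_R$ times something. The cleanest route is to use that $t_R$ spans a one-dimensional space stable under the left and right $R$-actions (it is both a left and right eigenvector for $R$ via $\eps$ and the modular function of $R$), and that ad-stability lets one slide $H$ past $t_R$.

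For the converse, suppose $t_R$ is normal, so $t_RH = Ht_R$. I would show $R^+H = HR^+$. Since $t_RH \cong V = H/R^+H$ as right $H$-modules by Lemma~\ref{lemma-right}, and $Ht_R \cong H/HR^+$ by the left-handed version, normality of $t_R$ gives an identification forcing $R^+H = HR^+$ as subspaces of $H$ (both being the kernels of the respective quotient maps composed with multiplication by $t_R$). Concretely, $h \in R^+H$ iff $t_Rh = 0$, and $h \in HR^+$ iff $ht_R = 0$ (using the left-module identification); normality $t_RH = Ht_R$ should be leveraged to show these two annihilator conditions coincide. The main obstacle will be establishing the precise left-module analogue of Lemma~\ref{lemma-right}, namely that $h \in HR^+$ iff $ht_R = 0$, and handling the possible twist by the modular function of $R$ when $R$ is not unimodular; one must verify that the one-sided integral $t_R$ genuinely detects both coideals symmetrically despite this twist, perhaps by passing through the antipode $S$, which interchanges left and right integrals and interchanges $R^+H$ with $HR^+$ since $S(R^+) = R^+$ and $S$ is an anti-automorphism.
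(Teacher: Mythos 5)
There are two genuine gaps here. First, the left-handed analogue of Lemma~\ref{lemma-right} on which your converse direction rests, namely ``$h \in HR^+$ iff $ht_R = 0$,'' is false whenever $R$ is not unimodular: a \emph{right} integral satisfies $t_R r = \eps(r)t_R$, but on the other side $rt_R = \alpha_R(r)t_R$ with $\alpha_R$ the modular function of $R$, so for $r \in R^+$ with $\alpha_R(r) \neq 0$ (which exists as soon as $\alpha_R \neq \eps_R$, e.g.\ $R$ a Taft algebra) one gets $hrt_R = \alpha_R(r)ht_R \neq 0$ in general. You flag this as ``the main obstacle'' but do not resolve it, and the antipode does not rescue it: $S(HR^+) = R^+H$ holds unconditionally, so passing through $S$ carries no information about normality of $t_R$. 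The salvageable part of your idea is purely one-sided: if $t_RH = Ht_R$, then for $x \in H$ and $r \in R^+$ write $t_Rx = yt_R$ and compute $t_Rxr = yt_Rr = \eps(r)yt_R = 0$, so $xr \in R^+H$ by the injectivity in Lemma~\ref{lemma-right}; hence $HR^+ \subseteq R^+H$, and equality follows by a dimension count (Nichols--Zoeller on both sides). The paper instead gets this direction by noting $V \cong t_RH$ is then a trivial $R$-module of depth $0$, so $d(R,H) \leq 2$ by Theorem~\ref{theorem-main}, whence $R$ is ad-stable by \cite{BK}.

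Second, for ``ad-stable $\Rightarrow$ $t_R$ normal,'' the phrase ``ad-stability lets one slide $H$ past $t_R$'' is not an argument, and the observation that $t_R$ is a simultaneous eigenvector for the two $R$-actions concerns $R$, not $H$. The missing mechanism is the one the paper supplies: ad-stability places $h_{(1)}t_RS(h_{(2)})$ inside $R$; triviality of $V_R$ (equivalently $t_Rxr = \eps(r)t_Rx$) shows this element is annihilated on the right by $R^+$, hence is a right integral of $R$ and equals $\mu t_R$ for a scalar $\mu$ by one-dimensionality of $\int_R^r$; then $ht_R = h_{(1)}t_RS(h_{(2)})h_{(3)} \in t_RH$. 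Without invoking the one-dimensionality of the integral space, your sketch has no way to reproduce $t_R$ on the left of an arbitrary element of $H$, so this direction remains unproved as written.
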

\begin{proof}
If $t_R$ satisfies $t_RH = Ht_R$, then $V$ is a trivial $R$-module by Lemma~\ref{lemma-right}. Then $V$ has depth $0$ in $\M_R$; so by Theorem~\ref{theorem-main}, $d(R,H) \leq 2$.  Then $R$ is ad-stable in $H$ by the characterization of depth two Hopf subalgebras in \cite{BK}.

If $R$ is ad-stable in $H$, then for each $h \in H$, $h\1 t_R S(h\2) \in R$.  Furthermore, 
$V$ is a trivial right $R$-module, from which it follows from $V \cong t_RH$ as right $H$-modules
(Lemma~\ref{lemma-right}) that $t_Rhr = t_Rh\eps(r)$ for all $h \in H, r \in R$.  
Then $h\1 t_R S(h\2) r = h\1 t_R S(h\2) \eps(r)$ for each $h \in H, r \in R$; whence $h\1 t_R S(h\2) = \mu t_R$ for some scalar $\mu$. Then
$$ ht_R = h\1 t_R S(h\2) h\3 = \sum_i \mu_i t_R h_i  \in t_RH$$
for some $h_i \in H$ and $\mu_i \in k$.   Then $Ht_R \subseteq t_RH$ and similarly we argue
$t_RH \subseteq Ht_R$.  
\end{proof}

As an example of applications of Theorem~\ref{theorem-main},  if $\mathrm{rad}\, R$ is ad-stable in  $H$ with $R^*$ pointed, 
then $d(R,H) < \infty$ by  Proposition~\ref{prop-semisimple}, since $V$ and its tensor powers are semisimple $R$-modules. Secondly, the computations of depth
of subgroups in \cite{BKK, BDK, F, FKR} computes depth of the permutation modules over the subgroup,
while computations of h-depth of Hopf subalgebras in \cite{LK2012} computes depth of their generalized permutation modules with respect to the overalgebra:  e.g., if $R = H_8$, the $8$-dimensional semisimple Hopf algebra in
\cite[Example 3.5]{LK2012} and \cite[p.\ 528]{R}, and $H = D(H_8)$ its Drinfeld double, the depths
$d(R,H) = 3$ and $d_h(R,H) = 5$,  whence the generalized permutation module has depths $d(V, \M_R) = 1$ and $d(V, \M_H) = 2$.

If we consider the complex group algebra extension of the permutation groups naturally embedded, $R = \C S_n$ and $H = \C S_{n+1}$, the ordinary depth $d(\C S_n,\C S_{n+1}) = 2n-1$, while $d_h(\C S_n, \C S_{n+1}) = 2n+1$ as noted in the lemma below; whence the depths of the associated permutation module $V$ are 
\begin{equation}
 d(V, \M_{S_n}) = n-1, \ \ d(V, \M_{S_{n+1}}) = n.
\end{equation} 

\begin{lemma}
The h-depth $d_h(\C S_n, \C S_{n+1}) = 2n+1$. 
\end{lemma}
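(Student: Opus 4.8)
The plan is to reduce the claimed h-depth equality to a known depth computation via Theorem~\ref{theorem-main}. Since the theorem gives $d_h(\C S_n, \C S_{n+1}) = 2d(V, \M_{\C S_{n+1}}) + 1$, it suffices to show that the generalized permutation module $V$ has depth $n$ in the category $\M_{\C S_{n+1}}$; that is, $d(V, \M_H) = n$. By Example~\ref{example-subgroups}, here $V \cong \C[S_n \backslash S_{n+1}]$ is the permutation module on the cosets, whose character is the induced principal character $\eta = 1_{S_n}\!\uparrow^{S_{n+1}}$, which decomposes as the trivial plus the standard $n$-dimensional representation of $S_{n+1}$.

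First I would invoke the character formula from Eq.~(\ref{eq: basic}), which tells us that the tensor powers $V^{\otimes n}$ of the module in $\M_H$ correspond exactly to powers $\chi_V^n$ of its character, and by Prop.~\ref{prop-sepext} together with Def.~\ref{def-moduledepth}, $d(V, \M_H)$ is the least $n$ at which $\mathrm{Irr}\,(\chi_V^n) = \mathrm{Irr}\,(\chi_V^{n+1})$ stabilizes as a subset of $\mathrm{Irr}\,(S_{n+1})$. So the computation reduces to determining at which power the irreducible constituents of $\eta^m = (1 + \mathrm{std})^m$ stop growing. The key classical input is that $\eta$ takes exactly $n+1$ distinct values as a character (namely the number of fixed points $0,1,\dots,n-1,n+1$ of elements of $S_{n+1}$ acting on $n+1$ points, which gives $n+1$ distinct values), and that $\eta$ is a faithful character since $S_n$ is corefree in $S_{n+1}$. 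The Brauer--Burnside theorem then guarantees that every irreducible of $S_{n+1}$ appears in some $\eta^m$ with $m < n+1$, so $\mathrm{Irr}\,(\chi_V^n) = \mathrm{Irr}\,(S_{n+1})$ and the set has stabilized by the $n$-th power, giving $d(V, \M_H) \leq n$.

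The matching lower bound is the delicate part and the main obstacle. I would establish $d(V, \M_H) \geq n$ by exhibiting an irreducible constituent of $S_{n+1}$ that first appears in $\chi_V^n$ but not in any smaller power $\chi_V^{m}$ with $m < n$. The natural candidate is the sign representation (or an appropriate near-sign irreducible): since $\chi_V = 1 + \chi_{\mathrm{std}}$ and the standard representation's top exterior powers $\Lambda^j(\mathrm{std})$ for $j = 0, \dots, n$ exhaust the fundamental alternating constituents, the sign representation $\Lambda^n(\mathrm{std})$ should require the full $n$-th tensor power to appear. I would verify by a direct inner-product or Frobenius-reciprocity argument that $\langle \chi_V^m, \mathrm{sgn}\rangle = 0$ for $m < n$ and is nonzero at $m = n$, which forces $\mathrm{Irr}\,(\chi_V^{n-1}) \subsetneq \mathrm{Irr}\,(\chi_V^n)$ and hence $d(V, \M_H) \geq n$. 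Combining the two bounds yields $d(V, \M_{\C S_{n+1}}) = n$, and Theorem~\ref{theorem-main} then gives $d_h(\C S_n, \C S_{n+1}) = 2n+1$ immediately. An alternative route that avoids the sign-constituent analysis is to cite the known computation $d(\C S_n, \C S_{n+1}) = 2n-1$ from \cite{BKK, BDK}, combine it with $|d(B,A) - d_h(B,A)| \leq 2$ from Section~2, and use the implication that h-depth $2n-1$ forces depth $2n$ to pin down that the h-depth cannot be $2n-1$ (else the ordinary depth would be at most $2n$, consistent, so one must rule out the lower value directly), making the character-theoretic lower bound the genuinely load-bearing step either way.
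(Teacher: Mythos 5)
Your argument is correct in outline and reaches the right answer, and the reduction via Theorem~\ref{theorem-main} to showing $d(V,\M_{\C S_{n+1}})=n$ is a legitimate route; but both halves are handled differently from the paper. For the upper bound the paper simply quotes $d(\C S_n,\C S_{n+1})=2n-1$ from \cite{BKK} and uses the chain of implications (depth $2n-1$ $\Rightarrow$ depth $2n$ $\Rightarrow$ h-depth $2n+1$), whereas you invoke the Brauer--Burnside theorem with the $n+1$ distinct values of $\eta$; both work, and yours has the virtue of staying entirely inside the module-depth framework. For the lower bound --- which, as you correctly observe, is the load-bearing step that the inequality $|d-d_h|\le 2$ cannot supply --- the paper works with the bipartite (Bratteli) graph of the inclusion matrix $M$: the minimum odd depth of $M^t$ is $1$ plus the diameter of the white dots (irreducibles of $S_{n+1}$), and the branching rule together with the computation $d((n),1^n)=2n-2$ from \cite{BKK} gives distance $2n$ between the partitions $(n+1)$ and $1^{n+1}$. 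Your sign-representation argument is the character-theoretic shadow of the same fact: asserting that $\mathrm{sgn}=\Lambda^n(\mathrm{std})$ first occurs in $\chi_V^{\,n}$ is equivalent, via Eq.~(\ref{eq: basic}), to asserting that the shortest path in the branching graph from the trivial to the sign partition has length $2n$. The one place your write-up remains a sketch is the verification that $\langle\chi_V^{\,m},\mathrm{sgn}\rangle=0$ for $m<n$: a ``direct inner-product'' computation is not immediate, and the clean justification is precisely the branching-rule/Young-lattice path count that the paper uses (equivalently, the classical fact that $\chi_\lambda$ occurs in $\chi_{\mathrm{std}}^{\otimes m}$ only if $|\lambda|-\lambda_1\le m$, which puts the first occurrence of the sign at $m=n$). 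So the two proofs share their combinatorial core; yours packages it through module depth and characters, the paper's through graph diameter of the inclusion matrix.
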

\begin{proof}
Since $d(\C S_n, \C S_{n+1}) = 2n-1$ \cite{BKK},  $d_h(\C S_n, \C S_{n+1}) \leq 2n+1$ follows. 
The h-depth is the minimum odd depth of the transpose inclusion matrix $M^t$ \cite{LK2012}.  The minimum odd depth of any inclusion matrix $M$ (of nonnegative integer entries which we may assume is connected) is $1 +$ the diameter of the black dots in the associated bipartite graph, where there is
a black dot for each row, a white dot for each column and these are connected by an edge if the corresponding entry in $M$ is positive \cite{BKK}.   The odd depth of $M^t$ is then the diameter of white dots:
since $M$ is an inclusion matrix, the white dots represent irreducible representations of $\C S_{n+1}$.
The lemma will follow by exhibiting the Young diagrams (labelled by partitions of $n+1$) of two white dots $i,j$ with distance $d(i,j)$ in edges \cite{D}
equal to $2n+1$. The branching rule and the argument in \cite{BKK} that $d((n), 1^n) = 2n-2$ shows that
the distance $d((n+1), 1^{n+1}) = 2n$.  A sketch of the two additional edges in the shortest path from
$(n+1)$ to $1^{n+1}$, necessarily via the shortest path from $(n)$ to $1^n$,  is given below:

$$\begin{array}{cccccccccccc}
(n+1) &                &           &               & (n,1) &                &              &  (2,1^{n-1})    &         &      & &1^{n+1} \\
         & \searrow &           & \nearrow &           & \searrow  &            &\vdots       &  \searrow &  & \nearrow & \\
         &               & (n)      &               &            &              & (n-1,1) &               &         &1^n & & 
\end{array} $$
\end{proof}
\begin{cor}
\label{cor-equivalence}
With the notation above,
\begin{equation}
d(R,H) - d(R, V^* \# R) \leq 2
\end{equation}
and 
\begin{equation}
d_h(R,H) = d_{\mathrm{odd}}(H, V^* \# H)
\end{equation}
\end{cor}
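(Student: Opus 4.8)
The plan is to deduce both statements of Corollary~\ref{cor-equivalence} from the equalities in Theorem~\ref{theorem-main} together with the smash-product formula~(\ref{eq: young}) recorded in Remark~\ref{remark-smash}. The key observation is that the module $V^*$ is a left $H$-module algebra (being the dual of the right $H$-module coalgebra $V$), and also a left $R$-module algebra by restriction along $R \into H$, so Eq.~(\ref{eq: young}) applies to both the smash product $V^* \# H$ over $H$ and the smash product $V^* \# R$ over $R$. The first step is to record the two instances of~(\ref{eq: young}):
\begin{equation}
d_{\mathrm{odd}}(H, V^* \# H) = 2d(V^*, {}_H\M) + 1, \qquad d_{\mathrm{odd}}(R, V^* \# R) = 2d(V^*, {}_R\M) + 1.
\end{equation}

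Next I would convert the module-algebra depths on the right into the module depths appearing in Theorem~\ref{theorem-main} using the duality remark in the smash-products subsection that $d(V, \M_H) = d(V^*, {}_H\M)$; the identical argument (via $(V^{\otimes n})^* \cong (V^*)^{\otimes n}$ and $V^{**} \cong V$, restricted to $R$) gives $d(V, \M_R) = d(V^*, {}_R\M)$. Substituting these into the two displayed formulas yields $d_{\mathrm{odd}}(H, V^* \# H) = 2d(V, \M_H) + 1$ and $d_{\mathrm{odd}}(R, V^* \# R) = 2d(V, \M_R) + 1$. The second equality of the corollary is then immediate: the right-hand side equals $d_h(R,H)$ by the h-depth statement of Theorem~\ref{theorem-main}. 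For the first equality I combine $d_{\mathrm{odd}}(R, V^* \# R) = 2d(V, \M_R) + 1$ with the inequality $2d(V,\M_R) + 1 \leq d(R,H) \leq 2d(V,\M_R) + 2$ (which is the refined bound displayed just before the normality proposition, itself a consequence of $d_{\mathrm{even}}(R,H) = 2d(V,\M_R)+2$). Since $d(R, V^*\#R)$ and $d_{\mathrm{odd}}(R,V^*\#R)$ differ by at most the gap between a depth and its least odd upper bound, one reads off $d(R,H) - d(R, V^*\#R) \leq 2$.

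The main obstacle I anticipate is bookkeeping between the \emph{odd} depth $d_{\mathrm{odd}}$ that appears in formula~(\ref{eq: young}) and the \emph{unrefined} minimum depth $d(R, V^* \# R)$ that appears in the statement of the first equality: Eq.~(\ref{eq: young}) controls $d_{\mathrm{odd}}(R, V^*\#R)$, not $d(R, V^*\#R)$ directly, and these can differ by one. I would handle this by noting $d(R,V^*\#R) \leq d_{\mathrm{odd}}(R, V^*\#R) = 2d(V,\M_R)+1 \leq d(R,H)$, so the harder direction is the lower bound on $d(R, V^*\#R)$; here I would use that $d_{\mathrm{odd}}(R,V^*\#R)$ exceeds $d(R,V^*\#R)$ by at most $1$ together with the upper bound $d(R,H) \leq 2d(V,\M_R)+2 = d_{\mathrm{odd}}(R,V^*\#R)+1$ to bound $d(R,H) - d(R,V^*\#R)$ by $2$. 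I would also want to confirm that $V^*\#R$ makes sense as an algebra containing $R$, i.e.\ that the augmentation structure described in the smash-products subsection gives the requisite subalgebra inclusion, but this is precisely the setup underlying~(\ref{eq: young}) and requires no new work.
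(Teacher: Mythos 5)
Your proposal is correct and follows essentially the same route as the paper: both arguments combine Theorem~\ref{theorem-main}, the duality $d(V,\M_R)=d(V^*,{}_R\M)$ (resp.\ over $H$), and Eq.~(\ref{eq: young}) to get $d_{\mathrm{odd}}(R,V^*\#R)=2d(V,\M_R)+1=d_{\mathrm{even}}(R,H)-1$ and $d_{\mathrm{odd}}(H,V^*\#H)=2d(V,\M_H)+1=d_h(R,H)$. The only cosmetic difference is that the paper finishes the first inequality with a parity case analysis while you use the equivalent chain $d(R,H)\leq d_{\mathrm{odd}}(R,V^*\#R)+1\leq d(R,V^*\#R)+2$.
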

\begin{proof}
Note that $V$ restricts to a right $R$-module coalgebra, then from the proof of Proposition~\ref{prop-sepext} we recall that
$V^*$ is a left $R$-module algebra; thus the smash product $V^* \# R$ is defined
such that $R$ is naturally embedded as a subalgebra (see \cite[ch.\ 4]{M}).  Also $d(V, \M_R) =$
$ d(V^*, {}_R\M)$, so that
$d_{\mathrm{even}}(R,H) = 2 d(V^*, {}_R\M) +2$ follows from Theorem~\ref{theorem-main};
from Eq.~(\ref{eq: young}) in Remark~\ref{remark-smash} one deduces that
$$d_{\mathrm{odd}}(R,V^* \# R) = 2d(V^*, {}_R\M) + 1 = d_{\mathrm{even}}(R,H) - 1.$$ Then $d(R,H) = d(R, V^* \# R)$ if both are odd integers,
$d(R,H) - d(R,V^*\# R) = 2$ if both are even, and their difference is $1$ if the two depths have different parity. 

The equality of depths follows from the second statement in Theorem~\ref{theorem-main}, the
fact that $d(V, \M_H) = d(V^*, {}_H\M)$ and Eq.~(\ref{eq: young}) applied to the left $H$-module algebra  $A = V^*$. 
\end{proof}
\begin{example}
\label{ex-taft}
\begin{rm}
Consider the Taft Hopf algebras $H = H_n = \bra g,x \| \, g^n = 1, x^n = 0, gxg^{-1} = qx \ket$
where $q$ is a primitive $n$'th root of unity in the ground field $k$.  These are Hopf algebras, where
$g$, $x$, is a grouplike, skew primitive element, \cite{R} of dimension $n^2$  (for each $n$ the Taft algebras are examples of basic algebras, Nakayama algebras, pointed Hopf algebras, algebras having finite representation and corepresentation type,  as well as monomial algebras \cite{LL}).  Consider the cyclic group subalgebra $R \cong k\Z_n$ affinely generated by $g$ in $H$.  The category $\M_R$
is semisimple with $n$ simples; thus the theorem computes the depth $d(R,H) \leq 2n+2$ by
an application of Prop.\ \ref{prop-semisimple}.  We improve this by computing $V$
and its depth.  Since $V = H/R^+H$ is $n$-dimensional, and $R^+$ is spanned by $1 - g^j$ for $j = 1,\ldots, n-1$, $V$ is spanned by $\overline{x^i}$ where $i = 0,1,\ldots,n-1$.  The $R$-module action on $V$ is given by $\overline{1}g^i = \overline{1}$, $\overline{x^j}g^i = q^{-ij}\overline{x^j}$.  (Thus
$V$ is a free $R$-module via $V_R \rightarrow R_R$, $\overline{x^{n-i}} \mapsto e_i$ where
$e_i = \frac{1}{n} \sum_{j=0}^{n-1}(q^{-i}g)^j$.  As an $H$-module $V \cong e_0 H$  is not semisimple, and as a coalgebra $V$ is cocommutative.) We compute that $V \otimes V \cong nV$, 
since $$\overline{x^j} \otimes \overline{x^k} \mapsto (0,\ldots,\overline{x^{[j+k]}},0,\ldots,0)$$ where the nonzero term occurs in the $k+1$'st coordinate and $[j+k]$ is congruent to $j+k$ (mod $n$)
and in the interval $0 \leq [j+k] < n$:  this follows from noting $(\overline{x^j} \otimes \overline{x^k})g^i = q^{-i(j+k)} \overline{x^j} \otimes \overline{x^k}$.
For example, if $n=2$ 
\begin{eqnarray*}
\overline{1} \otimes \overline{1} \mapsto (\overline{1}, 0) & & \overline{1} \otimes \overline{x} \mapsto (0,\overline{x}) \\
\overline{x} \otimes \overline{1} \mapsto (\overline{x},0) & & \overline{x} \otimes \overline{x} \mapsto (0,\overline{1}) 
\end{eqnarray*}
 It follows that
 $d(V,\M_R) = 1$.  Then by Theorem, $3 \leq d(R,H) \leq 4$.
It is computed by other means in \cite{CY} that $d(R,H) = 3$. 

\end{rm}
\end{example}
\begin{example}
\begin{rm}
The $4$- and $8$-dimensional Hopf subalgebra pair in Example~\ref{ex-8dim} has $d(R,H) \leq 6$
since $d(V, \M_R) \leq 2$.  More generally, the Taft Hopf algebra $R_d = H_d \subseteq \overline{U_q}(sl_2) = H_q$ is a Hopf subalgebra
of the $d^3$-dimensional quantum group at the $n$'th root of unity $q$, defined in the same example.
My computations at the $n$'th root of unity for $d > 2$ show that $V_{R_d}$ is not semisimple: a basis for $V$ is given by $\{ \overline{E^j}: j = 0,1,\ldots,d-1 \} $ and $\overline{E^2}\cdot F = -(q+q^{-1})\overline{E}$, whence $VJ \neq 0$. However, the radical $J$ of $R_d$ satisfies $J^d = 0$; by the formula for Nakayama algebras, \cite[p. 170]{ASS}, there are at most $d^2$ nonisomorphic indecomposable $R_d$-modules. It follows that the depth $d(R_d, H_q) \leq 2d^2+2$.  
\end{rm}
\end{example}
Again let $R \subseteq H$ denote a Hopf subalgebra. The ground field $k$ is of arbitrary characteristic,
and we do not require $R$ or $H$ to be basic.  (Basic Hopf algebras of finite representation type
are characterized in \cite{LL} as being Nakayama algebras.)
\begin{cor}
The  depth $d(R,H)$ is finite if any one of the following conditions are met:
\begin{enumerate}
\item either $R$ or $H$ has finite representation type;
\item the generalized permutation module $V$ is projective;
\item the $R$-module $V$ is semisimple and $R$ has the Chevalley property
(e.g. $R^*$ is pointed).
\end{enumerate} 
\end{cor}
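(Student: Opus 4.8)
The plan is to read all three hypotheses as devices for forcing one of the two module depths $d(V,\M_R)$ or $d(V,\M_H)$ to be finite, and then to transfer finiteness to $d(R,H)$ by Theorem~\ref{theorem-main}. Recall from that theorem that $d_{\mathrm{even}}(R,H) = 2d(V,\M_R)+2$ and $d_h(R,H) = 2d(V,\M_H)+1$; combined with Eq.~(\ref{eq: main}) and the relation $|d(R,H)-d_h(R,H)| \leq 2$ recorded in Section~2, this shows that $d(R,H) < \infty$ as soon as \emph{either} $d(V,\M_R) < \infty$ \emph{or} $d(V,\M_H) < \infty$. So in each of the three cases it suffices to bound a single module depth of $V$, and the whole corollary reduces to a dictionary lookup among the propositions of Section~\ref{three}.

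For condition (1) I would split into two subcases. If $H$ has finite representation type, then $\M_H$ has only finitely many isoclasses of indecomposables and Proposition~\ref{prop-frt}, applied to $W = V$, bounds $d(V,\M_H)$ by that number; hence $d_h(R,H) < \infty$ and therefore $d(R,H) < \infty$. If instead $R$ has finite representation type, I would apply the \emph{same} Proposition~\ref{prop-frt} with $R$ playing the role of the ambient Hopf algebra and $V$ viewed as an object of $\M_R$, bounding $d(V,\M_R)$ and concluding through the even-depth formula. For condition (2), Theorem~\ref{prop-Rsemisimple} identifies projectivity of $V$ with semisimplicity of $R$, and then the (unlabeled) proposition bounding the depth of a projective module by $s$ gives $d(V,\M_H) \leq s < \infty$, finishing as before. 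For condition (3), I would invoke Proposition~\ref{prop-semisimple} with $R$ as the ambient Hopf algebra: the Chevalley hypothesis says $\mathrm{rad}\, R$ is a Hopf ideal (which holds in particular when $R^*$ is pointed, via \cite[5.2.8]{M}), and $V_R$ is assumed semisimple, so $d(V,\M_R)$ is bounded by the finite number of simple $R$-modules, whence $d(R,H) < \infty$.

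Since the corollary is essentially a bookkeeping consequence of Theorem~\ref{theorem-main}, I do not expect a genuine obstacle. The one point that requires care is that several of the cited results (the projective-depth proposition and the setup of Propositions~\ref{prop-frt} and~\ref{prop-semisimple}) are phrased for the module category $\M_H$ of the large Hopf algebra, whereas subcase (1) with $R$ of finite representation type and condition (3) must be run inside $\M_R$. I would therefore make explicit that $R$ is itself a finite-dimensional Hopf algebra, so that $\M_R$ is again a finite tensor category inheriting exactly the hypotheses those propositions need, and that $V$ restricts to a genuine object (indeed a right $R$-module coalgebra) of $\M_R$. The only other thing to watch is to select, in each subcase, the correct half of Theorem~\ref{theorem-main}---the even-depth formula when I have bounded $d(V,\M_R)$, the h-depth formula when I have bounded $d(V,\M_H)$---so that the finiteness I have actually established is the one being transferred to $d(R,H)$.
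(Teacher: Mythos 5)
Your proposal is correct and follows essentially the same route as the paper: in each case you bound one of the module depths $d(V,\M_R)$ or $d(V,\M_H)$ using the propositions of Section~\ref{three} and then transfer finiteness to $d(R,H)$ through Theorem~\ref{theorem-main}. The only cosmetic divergences are that, when $H$ has finite representation type, the paper restricts $V$ along $R \hookrightarrow H$ via Lemma~\ref{lemma-RES} and uses the even-depth equality instead of your h-depth route, and for projective $V$ the paper reduces to case (1) by observing that Theorem~\ref{prop-Rsemisimple} makes $R$ semisimple and hence of finite representation type, rather than citing the projective-module depth bound directly; both variants are valid.
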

\begin{proof}
  If $R$ has f.r.t.\ then the right $R$-module $V = H/R^+H$ has finite depth by  Proposition~\ref{prop-frt}.  Then $d(R,H) < \infty$ by Theorem~\ref{theorem-main}.  If $H$ has f.r.t.\ with $n$ isomorphism  classes of indecomposable modules, then the right $H$-module $V$ has  depth $ n$, and its restriction along $R \into H$ has depth $d(V, \M_R) \leq n$ by Lemma~\ref{lemma-RES}.  Thus  $d(R,H) \leq 2n + 2$ by Theorem~\ref{theorem-main}.  

If $V = H/ R^+H$ is projective as either an $R$- or $H$-module, then by Proposition~\ref{prop-Rsemisimple}, $R$ is a  semisimple algebra, and therefore has f.r.t. Finally, if $V$ is a semisimple $R$-module,
and semisimple modules are closed under tensor product, then $V$ has finite depth as noted in
Proposition~\ref{prop-semisimple} and the paragraph above it.  Then $d(R,H) < \infty$ follows from Theorem~\ref{theorem-main}.  
\end{proof}

\begin{remark}
\begin{rm}
In order to deal with the depth of a finite-dimensional Hopf subalgebra pair $R \subseteq H$, both of tame or wild representation type, it will not work to find a split extension of the overalgebra to a Hopf algebra having f.r.t.\ and use Prop.~\ref{prop-split} to conclude that $d(R,H) < \infty$ ; nor will it work to find  the Hopf subalgebra separably extending down to a Hopf subalgebra having f.r.t., then using Prop.~\ref{prop-separability} to conclude that depth is finite.  The reason is that the Higman-Jans theorem, stated in the same subsection, contradicts both of  these approaches. 
\end{rm}
\end{remark}

\subsection{Depth of tensor Hopf algebras} 
If $R_i \subseteq H_i$ are Hopf subalgebras for each $i = 1,\ldots,t$ and $\M_i$ are their
categories of finite-dimensional right $H_i$-modules with tensor products $\otimes_i$, there is a relation between the h-depth
$d_i := d_h(R_i,H_i)$ and the h-depth of the tensor Hopf algebras $$R:= R_1 \otimes \cdots \otimes R_t \subseteq H := H_1 \otimes \cdots \otimes H_t.$$ Recall the obvious exterior tensor product
$\odot: \prod_{i=1}^t \M_i \rightarrow \M_H$ such that 
\begin{equation}
\label{eq: 2-cat}
(U_1 \odot \cdots \odot U_t) \otimes (W_1 \odot \cdots \odot W_t) \cong
(U_1 \otimes_1 W_1) \odot \cdots \odot (U_t \otimes_t W_t),
\end{equation}
an isomorphism of right $H$-modules, for $U_i , W_i \in \M_i$. There are some obvious identifications one must make
such as $H_i \subseteq H$ as a Hopf subalgebra.  
The following generalizes earlier results for depth of subgroup pairs in a direct product, e.g.\ \cite{BKK}.  

\begin{cor}
The h-depth of the tensor Hopf subalgebra pair is $d_h(R,H) = \max \{d_i : i = 1,\ldots,t \}$. 
\end{cor}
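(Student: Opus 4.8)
The plan is to reduce the statement about h-depths to a statement about the module depth of the generalized permutation module via Theorem~\ref{theorem-main}, which gives $d_h(R,H) = 2d(V,\M_H)+1$ and likewise $d_i = 2d(V_i,\M_i)+1$ for each factor, where $V = H/R^+H$ and $V_i = H_i/R_i^+H_i$. Since the map $n \mapsto 2n+1$ is strictly increasing and therefore commutes with $\max$, it suffices to prove the single identity
\begin{equation}
d(V,\M_H) = \max \{ d(V_i, \M_i) : i = 1,\ldots,t \}.
\end{equation}
The first step is to identify $V$ for the tensor pair. Using $V \cong k \otimes_R H$ (the induction of the trivial module, as recorded after Example~\ref{example-subgroups}) together with $k_R = k_{R_1} \odot \cdots \odot k_{R_t}$ and the factorization $(k \otimes \cdots \otimes k)\otimes_{R_1 \otimes \cdots \otimes R_t}(H_1 \otimes \cdots \otimes H_t) \cong (k \otimes_{R_1} H_1) \otimes \cdots \otimes (k \otimes_{R_t} H_t)$, one gets an isomorphism of right $H$-modules $V \cong V_1 \odot \cdots \odot V_t$. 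Iterating the exterior-product identity~(\ref{eq: 2-cat}) then yields $V^{\otimes n} \cong V_1^{\otimes_1 n} \odot \cdots \odot V_t^{\otimes_t n}$ for every $n$.

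For the upper bound $d(V,\M_H) \leq m$ (writing $m$ for the right-hand maximum), recall that each $V_i$ is a module coalgebra, so by Proposition~\ref{prop-sepext} depth $\leq m$ means $V_i^{\otimes (m+1)} \| q_i V_i^{\otimes m}$ in $\M_i$. Because $\odot$ is additive in each variable, divisibilities combine factorwise: replacing one tensorand at a time shows $V_1^{\otimes(m+1)} \odot \cdots \odot V_t^{\otimes(m+1)} \| (q_1\cdots q_t)\,(V_1^{\otimes m} \odot \cdots \odot V_t^{\otimes m})$, i.e.\ $V^{\otimes(m+1)} \| q V^{\otimes m}$ with $q = q_1\cdots q_t$. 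For the lower bound I would use the Hopf-algebra embeddings $\iota_i : H_i \into H$, $h \mapsto 1 \otimes \cdots \otimes h \otimes \cdots \otimes 1$ (these are Hopf maps since the counit kills the other factors). Because the action on $V_1 \odot \cdots \odot V_t$ is factorwise, restricting $V$ along $\iota_i$ lets $H_i$ act only on the $i$-th tensorand, so $V_{\iota_i} \cong N_i V_i$ with $N_i = \prod_{j \neq i} \dim V_j$; in particular $V_{\iota_i} \sim V_i$, and similar modules share the same depth. Lemma~\ref{lemma-RES} then gives $d(V_i,\M_i) = d(V_{\iota_i},\M_i) \leq d(V,\M_H)$ for each $i$, whence $m \leq d(V,\M_H)$. (When $d(V,\M_H)=\infty$ this inequality is trivially true, and conversely forces the maximum to be infinite, so the equality persists in the infinite case.)

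Combining the two inequalities gives the module-depth identity, and applying Theorem~\ref{theorem-main} to $R \subseteq H$ and to each $R_i \subseteq H_i$ converts it to $d_h(R,H) = \max_i d_i$. The main obstacle I anticipate is the lower bound: the naive route would try to show $\mathcal{P}_n(V) = \prod_i \mathcal{P}_n(V_i)$, but indecomposables over a tensor product $H_1 \otimes \cdots \otimes H_t$ need not be exterior products of indecomposables over an arbitrary (non-split) field, so a Krull--Schmidt bookkeeping argument would be unreliable here. The restriction trick through $\iota_i$ sidesteps this entirely by staying at the level of similarity and divisibility, which are all that Theorem~\ref{theorem-main} and Lemma~\ref{lemma-RES} require; the only facts one must verify carefully are that $\iota_i$ is a morphism of Hopf algebras and that the restriction of the exterior product along $\iota_i$ is a multiple of $V_i$, both of which are immediate from the factorwise action.
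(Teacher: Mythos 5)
Your proposal is correct and follows essentially the same route as the paper: identify $V \cong V_1 \odot \cdots \odot V_t$, prove $d(V,\M_H) = \max_i d(V_i,\M_i)$ by combining the factorwise divisibilities via Eq.~(\ref{eq: 2-cat}) for the upper bound and restricting $V$ to a multiple $r_iV_i$ of each $V_i$ for the lower bound, then apply Theorem~\ref{theorem-main}. The only cosmetic differences are that the paper exhibits the isomorphism $V_1 \odot \cdots \odot V_t \cong V$ by an explicit map plus a dimension count rather than via $k\otimes_R H$, and carries out the restriction step on the divisibility relation directly instead of citing Lemma~\ref{lemma-RES}.
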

\begin{proof}
Let $V_i := H_i / R_i^+H_i$, then observe that $V_1 \odot \cdots \odot V_t \stackrel{\cong}{\longrightarrow} V$
as $H$-modules via  the $H$-module monomorphism
$$(x + R_1^+H_1) \odot \cdots \odot (z + R_t^+H_t) \mapsto x \otimes \cdots \otimes z + \sum_i H_1 \otimes \cdots \otimes R_i^+H_i \otimes \cdots \otimes H_t,$$ which is epi by counting dimensions.  The corollary follows from Theorem~\ref{theorem-main} if
we establish $d(V, \M_H) = \max \{ d(V_i, \M_i) =:  d'_i \}_{i=1}^t$.  Note from
Eq.~(\ref{eq: 2-cat}) that for each $m \in \N$, the tensor power of $V$ simplifies to $$V^{\otimes m} \cong
V_1^{\otimes_1 m} \odot \cdots \odot V_t^{\otimes_t m}.$$
Suppose $V_i^{\otimes_i (n+1)}\oplus U_i \cong q_i V_i^{\otimes_i n}$ for each $i = 1,\ldots,t$,
some complementary module $U_i$ and $q_i \in \N$, then taking the exterior tensor product of the $t$
iso-equations using the distributive law and the displayed equation obtains
$V^{\otimes (n+1)} \| q_1\cdots q_t V^{\otimes n}$.  From this it follows
that $d(V,\M_H) \leq \max \{ d'_1,\ldots,d'_t \}$.  

Conversely, the generalized permutation module $V$ restricts in $t$ ways to multiples
of $V_i \in \M_i$:  $V|_{H_i} = r_i V_i$ where $r_i = \prod_{j \neq i} (\dim V_j)$
for obvious reasons.  If $V^{\otimes (n+1)} \| t V^{\otimes n}$ for some $n \in \N$, by
restriction of this isomorphism to each category $\M_i$ one obtains $r_i^{n+1} V_i^{\otimes_i (n+1)}
\| r_i^n t V_i^{\otimes_i n}$, thus $V_i$ has depth $n$ for each $i = 1,\ldots,t$. 
Hence $d(V, \M_H) \geq \max \{ d'_1,\ldots, d'_t \}$. The corollary follows. 
\end{proof}
\section{Tensor product theorems give finite depth}
An example of a Hopf subalgebra not satisfying the hypotheses above can be constructed in characteristic $p$ from noncyclic $p$-groups and their finite group extensions.   Such examples have finite depth for another reason.  In the last two sections, we noted that when $R$ has the Chevalley  property, so that $R$-simples satisfy a tensor product theorem, then $R$ has finite depth in a Hopf algebra where the generalized permutation module $V_R$ is semisimple.  Something similar is true
for finite group algebra extensions due to a (Mackey-theoretic) tensor product theorem \cite[10.18]{CR}, which has finite closed form for permutation modules of subgroups (the Burnside ring of the group).  

Suppose a finite set of Hopf subalgebras $R_i \subseteq H$ have
induced modules $V_i = H/R_i^+H$ for $i = 1,\ldots, n$, where the  modules $V_i$ satisfy a tensor product theorem; then each depth $d(R_i,H)$ is finite, as shown below, the classical case where $H$ is a group algebra being noted. 

\begin{theorem}
Suppose $V_i \otimes V_j \cong \sum_{k =1}^n \! \oplus \ a^k_{ij} V_k$ for $n^3$ nonnegative integers $a^k_{ij}$. Then $d(R_i,H) \leq 2n+2$ for each Hopf subalgebra $R_i \subseteq H$. In particular, 
finite group algebra extensions have finite depth \cite{BDK}. 
\end{theorem}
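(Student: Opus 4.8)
The plan is to reduce the statement to a bound on the module depth $d(V_i,\M_H)$ and then quote Theorem~\ref{theorem-main} in the form of the upper estimate $d(R_i,H)\le 2d(V_i,\M_H)+2$ from Eq.~(\ref{eq: main}); it therefore suffices to prove $d(V_i,\M_H)\le n$ for each $i$. The first ingredient is a closure property of tensor powers: by induction on $m$, the hypothesis $V_i\otimes V_j\cong\bigoplus_k a^k_{ij}V_k$ together with distributivity of $\otimes$ over $\oplus$ shows that each $V_i^{\otimes m}$ is isomorphic to a direct sum of copies of $V_1,\dots,V_n$. Since $V_i=H/R_i^+H$ is a right $H$-module coalgebra, Proposition~\ref{prop-sepext} supplies the split monomorphisms $V_i^{\otimes m}\,\|\,V_i^{\otimes(m+1)}$, so by Krull--Schmidt the indecomposable supports $D_m:=\mathrm{Indec}_H(V_i^{\otimes m})$ form an ascending chain $D_1\subseteq D_2\subseteq\cdots$; and by the same proposition, depth at most $m$ for the module coalgebra $V_i$ is exactly the condition $V_i^{\otimes m}\sim V_i^{\otimes(m+1)}$, i.e.\ $D_m=D_{m+1}$.

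The crux is to bound the length of this chain by $n$ rather than by the a priori larger total number of indecomposables occurring among the $V_k$. The key is that each $D_m$ is a union of members of the fixed finite family $\{\mathrm{Indec}_H(V_k)\}_{k=1}^n$: setting $\hat\Sigma_m:=\{k:\mathrm{Indec}_H(V_k)\subseteq D_m\}$ one checks $D_m=\bigcup_{k\in\hat\Sigma_m}\mathrm{Indec}_H(V_k)$, so $D_m$ is recovered from the saturated support $\hat\Sigma_m\subseteq\{1,\dots,n\}$, and $D_m\subseteq D_{m+1}$ forces $\hat\Sigma_m\subseteq\hat\Sigma_{m+1}$. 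Whenever the inclusion $D_m\subseteq D_{m+1}$ is strict, any new indecomposable lies in some $\mathrm{Indec}_H(V_k)\not\subseteq D_m$, so $\hat\Sigma_{m+1}$ gains at least one index not in $\hat\Sigma_m$. As $i\in\hat\Sigma_1$ (because $V_i^{\otimes 1}=V_i$), we have $|\hat\Sigma_1|\ge 1$, and a subset of an $n$-element set starting from size $\ge 1$ can grow strictly at most $n-1$ times. Hence $\hat\Sigma_n=\hat\Sigma_{n+1}$, equivalently $D_n=D_{n+1}$, giving $d(V_i,\M_H)\le n$ and therefore $d(R_i,H)\le 2n+2$. (That stabilization persists is clear, since tensoring a similarity by $V_i$ preserves it.)

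For the concluding assertion I would take $\mathcal{G}$ a finite group, $\mathcal{H}\le\mathcal{G}$, and $R=k[\mathcal{H}]\subseteq k[\mathcal{G}]=H$, letting $R_1,\dots,R_n$ be representatives of the conjugacy classes of all subgroups of $\mathcal{G}$, so that the $V_i\cong k[\mathcal{H}_i\backslash\mathcal{G}]$ are the transitive permutation modules of Example~\ref{example-subgroups}. The Mackey tensor product theorem \cite[10.18]{CR} expresses the tensor product of two such permutation modules as a direct sum of permutation modules attached to intersections of conjugate subgroups, all of which again occur in our finite list; thus the Burnside-ring hypothesis $V_i\otimes V_j\cong\bigoplus_k a^k_{ij}V_k$ holds over this finite set, and the first part gives $d(k[\mathcal{H}],k[\mathcal{G}])\le 2n+2<\infty$ over an arbitrary field, recovering \cite{BDK}.

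The main obstacle is precisely the combinatorial bookkeeping of the second paragraph: the naive ascending chain of indecomposable constituents could in principle be much longer than $n$, and the sharp bound $2n+2$ survives only because each strict step of that chain must absorb an \emph{entire} new member $\mathrm{Indec}_H(V_k)$ of the size-$n$ family. Isolating the saturated support $\hat\Sigma_m$ is the device that converts the problem into an ascending chain inside an $n$-element index set and thereby pins down the constant.
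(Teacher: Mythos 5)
Your proposal is correct and follows essentially the same route as the paper: tensor powers of $V_i$ decompose over the finite family $\{V_1,\dots,V_n\}$, so the ascending chain of constituents is governed by an ascending chain of subsets of an $n$-element index set, which stabilizes within $n$ steps; combined with Theorem~\ref{theorem-main} (via Eq.~(\ref{eq: main})) this gives $d(R_i,H)\le 2n+2$, and the Mackey formula~\cite[10.18]{CR} supplies the closed finite family in the group-algebra case. Your $\hat\Sigma_m$ device merely makes explicit the counting the paper asserts in one line, and your choice of the family of all subgroups up to conjugacy (rather than the conjugates of $\mathcal{H}$ and their intersections) is an inessential variation.
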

\begin{proof}
Since each $V_i$ and its tensor powers $V_i^{\otimes n}$ are direct sums of multiples (of the form $a^k_{ii}a^r_{ki}a^s_{ri}\ldots$) of 
$\mathcal{V} := \{ V_1,\ldots, V_n \}$, the chain of $\mathcal{V}$-constituents $$\cdots \subseteq \{ V_j \in \mathcal{V} \, : \, V_j \| T_r(V_i) \} \subseteq \{ V_j \in \mathcal{V} \, : \, V_j \| T_{r+1}(V_i) \} \subseteq \cdots$$ stabilizes after at most $n$ steps $r = 1,\ldots,n$.  Thus
$T_{n+1}(V_i) \| qT_n(V_i)$ for some large enough $q$.  Apply
Theorem~\ref{theorem-main} to conclude that $d(R_i,H) \leq 2n+2$.  
 
Let $\mathcal{H}_1, \mathcal{H}_2 < \mathcal{G}$ be two subgroups of a finite group.  Consider the group algebras $k[\mathcal{H}_i] \subseteq k[\mathcal{G}]$ a Hopf subalgebra pair where the permutation module $V_i = \mathrm{Ind}^{\mathcal{G}}_{\mathcal{H}_i} k := (k|_{\mathcal{H}_i})^\mathcal{G}$ has dimension $|\mathcal{G}: \mathcal{H}_i|$ ($i=1,2$). The tensor product theorem (valid as well for a commutative ground ring) \cite[10.18]{CR}  applied to $V_i $ gives
\begin{equation}
\label{eq: tpt}
  V_1 \otimes V_2 \cong \sum_{x^{-1}y \in D}\! \oplus \, ( k|_{{}^{x}\mathcal{H}_1 \cap {}^{y}\mathcal{H}_2})^\mathcal{G}
\end{equation}
where $D$ is a set of  representatives of $(\mathcal{H}_1,\mathcal{H}_2)$-double cosets in $\mathcal{G}$ and ${}^y\mathcal{H}_i$ denotes the conjugate subgroup $y\mathcal{H}_iy^{-1}$.  Thus given a subgroup $\mathcal{H}$
and its associated permutation module $V$, let $I$ be the finite index set of  conjugate
subgroups of $\mathcal{H}$ and their intersecting subgroups, i.e.
$I = \{ {}^x\mathcal{H} \cap \cdots \cap {}^z\mathcal{H} \, : \, x,\ldots,z \in \mathcal{G} \}$, $n = |I|$ and $V_i$ the permutation modules associated with each of these, beginning with $V_1 = V$.  Then
the formula~(\ref{eq: tpt}) for $V^{\otimes 2}$ and its extension to $V_i \otimes V_j$ are indeed formulas as in the hypothesis of the corollary.  Thus, $d(V, \M_{k[\mathcal{G}]}) \leq n$, then apply Lemma~\ref{lemma-RES} and Theorem~\ref{theorem-main}. 
\end{proof}
 For example, if $\mathcal{H} \lhd \mathcal{G}$, then $ |I| = 1$; if $\G$ is a Frobenius group and $\H$ is the complementary subgroup, then $| I | = 2$. Putting together \cite[Theorem 6.9]{BKK} and Eq.~(\ref{eq: main}), one shows the depth of $V_{\C \mathcal{G}}$ is less than or equal to the minimum number of conjugates of $\mathcal{H}$ intersecting in the core of $\mathcal{H}$.

\subsection{Acknowledgements}   The author thanks Sebastian Burciu, Christian Lomp,  Alexander  Stolin, and Christopher Young for interesting discussions in Porto and Str\"omstad, as well as the University of Pennsylvania for making available its Visiting Scholar program. My thanks to the referee for corrections and pointing out algebraic modules. Research for this paper was funded by the European Regional Development Fund through the programme {\tiny COMPETE} 
and by the Portuguese Government through the FCT  under the project 
\tiny{ PE-C/MAT/UI0144/2011.nts}.

\end{document}